\def\balign#1\ealign{\begin{align}#1\end{align}}
\def\baligns#1\ealigns{\begin{align*}#1\end{align*}}
\def\balignat#1\ealign{\begin{alignat}#1\end{alignat}}
\def\balignats#1\ealigns{\begin{alignat*}#1\end{alignat*}}
\def\bitemize#1\eitemize{\begin{itemize}#1\end{itemize}}
\def\benumerate#1\eenumerate{\begin{enumerate}#1\end{enumerate}}
\newenvironment{talign*}
 {\let\displaystyle\textstyle\csname align*\endcsname}
 {\endalign}
\newenvironment{talign}
 {\let\displaystyle\textstyle\csname align\endcsname}
 {\endalign}
\def\balignst#1\ealignst{\begin{talign*}#1\end{talign*}}
\def\balignt#1\ealignt{\begin{talign}#1\end{talign}}
\let\originalleft\left
\let\originalright\right
\renewcommand{\left}{\mathopen{}\mathclose\bgroup\originalleft}
\renewcommand{\right}{\aftergroup\egroup\originalright}
\def\tinycitep*#1{{\tiny\citep*{#1}}}
\def\tinycitealt*#1{{\tiny\citealt*{#1}}}
\def\tinycite*#1{{\tiny\cite*{#1}}}
\def\smallcitep*#1{{\scriptsize\citep*{#1}}}
\def\smallcitealt*#1{{\scriptsize\citealt*{#1}}}
\def\smallcite*#1{{\scriptsize\cite*{#1}}}
\def\<{\left\langle} % Angle brackets
\def\>{\right\rangle}
\DeclareSymbolFont{rsfs}{U}{rsfs}{m}{n}
\DeclareSymbolFontAlphabet{\mathscrsfs}{rsfs}
\newtheorem{theorem}{Theorem}
\newtheorem{lemma}[theorem]{Lemma}
\newtheorem{remark}[theorem]{Remark}
\newtheorem{corollary}[theorem]{Corollary}
\newtheorem{definition}[theorem]{Definition}
\renewenvironment{proof}[1][Proof]{%
  \par\noindent\textbf{#1.}\hspace{0.3em}%  
}{%
  \hfill\qed\par\vspace{0.1in}%
}
\newenvironment{proof-sketch}{\noindent\textbf{Proof Sketch}
  \hspace*{1em}}{\qed\bigskip\\}
\newenvironment{proof-idea}{\noindent\textbf{Proof Idea}
  \hspace*{1em}}{\qed\bigskip\\}
\newenvironment{proof-of-lemma}[1][{}]{\noindent\textbf{Proof of Lemma {#1}}
  \hspace*{1em}}{\qed\\}
  \newenvironment{proof-of-proposition}[1][{}]{\noindent\textbf{Proof of Proposition {#1}}
  \hspace*{1em}}{\qed\\}
\newenvironment{proof-of-theorem}[1][{}]{\noindent\textbf{Proof of Theorem {#1}}
  \hspace*{1em}}{\qed\\}
\newenvironment{proof-attempt}{\noindent\textbf{Proof Attempt}
  \hspace*{1em}}{\qed\bigskip\\}
\newtheorem*{remark*}{Remark}
\newtheorem{proposition}[theorem]{Proposition}
\newtheorem{assumption}{Assumption}
\theoremstyle{definition}
\renewcommand{\Pr}[1]{\mathbb{P}\left( #1 \right)}
\definecolor{OliveGreen}{rgb}{0,0.6,0}
\pgfplotsset{compat=1.18}  
\renewcommand{\paragraph}{%
  \@startsection{paragraph}{4}%
  {\z@}{1.25ex \@plus 1ex \@minus .2ex}{-1em}%
  {\normalfont\normalsize\bfseries}%
}
\title{The Root Finding Problem Revisited: Beyond the
Robbins-Monro procedure}
\begin{document}
\author{%
  Yue Yu\thanks{Department of Statistics, University of Michigan; Email: \texttt{\{yueuy,moulib,yritov\}@umich.edu}}
  \and
  Moulinath Banerjee\footnotemark[1]   % same two marks
  \and
  Ya'acov
  Ritov\footnotemark[1]
}

\maketitle

\begin{abstract}
We introduce \emph{Sequential Probability Ratio Bisection} (SPRB), a novel stochastic approximation algorithm that adapts to the local behavior of the (regression) function of interest around its root. We establish theoretical guarantees for SPRB’s asymptotic performance, showing that it achieves the optimal convergence rate and minimal asymptotic variance even when the target function’s derivative at the root is small (at most half the step size), a regime where the classical Robbins-Monro procedure typically suffers reduced convergence rates. Further, we show that if the regression function is discontinuous at the root, Robbins-Monro converges at a rate of \(1/n\) whilst SPRB attains exponential convergence. If the regression function has vanishing first-order derivative, SPRB attains a faster rate of convergence compared to stochastic approximation. As part of our analysis, we derive a nonasymptotic bound on the expected sample size and establish a generalized Central Limit Theorem under random stopping times. Remarkably, SPRB automatically provides nonasymptotic time-uniform confidence sequences that do not explicitly require knowledge of the convergence rate. We demonstrate the practical effectiveness of SPRB through simulation results.
\end{abstract}

\section{Introduction}
\label{section:intro}

The seminal paper  \cite{robbins_stochastic_1951} considers the problem of using successive approximations $X_n$ to find the unique root $\theta$ of a regression function $f$ in the
regression model 
\begin{align}
\label{eq:DGP}
    Y_n = f(X_n) + \varepsilon_n,\quad n\in \bbN^+,
\end{align}
where $\varepsilon_n$ denotes unobservable random errors. Let \(f(x)\) be a target (regression) function and let \(\theta \in \mathbb{R}\). Suppose \(f\) satisfies zero-crossing condition
\begin{align}
\label{eq:zero-crossing}
(x - \theta)\,f(x) > 0
\quad \text{for all} \quad x \neq \theta.
\end{align}
It is not strictly required that \(f(\theta) = 0\) and we do not have to specify the value of \(f(\theta)\). This allows, for instance, the possibility that \(f\) has a jump discontinuity at \(\theta\). Consequently, in the results that follow, we use the term “root” to include both the continuous case (where \(f(\theta)=0\)) and the discontinuous case (where \(f(\theta)\) may not be zero or may be undefined), but \(\theta\) is still the unique point where the sign of \(f\) changes from negative to positive.

 One begins by choosing \(X_n\), then iteratively obtains new values \(Y_n\), which represent noisy measurements of $f(X_n)$, and chooses $X_{n+1}$ based on $\{(X_i,Y_i)\}_{1\leq i\leq n}$ . To average out the errors $\varepsilon_n$, \cite{robbins_stochastic_1951} proposed using a recursive scheme of the form
\begin{align}
\tag{$\text{RM}$}
\label{eq:RM-SA}
X_{n+1}= X_n - \alpha_n Y_n, \quad n\in \mathbb N,
\end{align}
where $\alpha_n$ are positive constants such that $\sum_{n=1}^\infty \alpha_n =\infty$ and $\sum_{n=1}^\infty \alpha_n^2 <\infty$. The assumption $\sum_{n=1}^\infty \alpha_n^2 <\infty$ guarantees that $\sum_{n=1}^\infty \alpha_n \varepsilon_n$ converges in $L^2$ and almost surely. Under certain regularity conditions, this in turn implies that $X_n -\theta$ converges in $L^2$ and a.s., and the assumption $\sum_{n=1}^\infty \alpha_n =\infty$ then ensures that the limit of $X_n - \theta$
is $0$. For this reason, a common choice of \(\alpha_n\) is \(\alpha_n = \frac{\alpha}{n}\).

An immediate observation is that the problem in \eqref{eq:DGP} can be directly transformed into finding the \(L\)-level root of \(f(x) = L\). Specifically, one may apply the same algorithm to the sequence \(\{(X_i,\, Y_i - L)\}_{i=1}^n\), thereby shifting the observations by \(L\).

The Robbins–Monro procedure has attracted considerable attention in machine learning, especially for its variant: stochastic gradient descent (SGD) and its application in large-scale datasets (see, for example, \citep{zhang_solving_2004,moulines_non-asymptotic_2011,ruder_overview_2017,vatter_evolution_2023}) and dynamic distribution shifts (see, for example, \citep{perdomo_performative_2021,cutler_stochastic_2024}). 
Recently, there has also been growing interest in asymptotic inference 
for root-finding problems; see \citep{su_higrad_2023,lee_fast_2022,fang_online_2018}. For an overview of further applications and variants of stochastic approximation methods, we refer the reader to Section \ref{section:related work}, the surveys \citep{lai_stochastic_2003,lai_stochastic_2021}, the textbook \citep{kushner_stochastic_2003}, and the references therein.

In the literature, the asymptotic behavior of the Robbins--Monro procedure \eqref{eq:RM-SA} has been extensively analyzed under the condition \(f'(\theta) > \frac{1}{2}\) when $\alpha_n = \frac{1}{n}$, and adaptive stochastic approximation methods \citep{lai_adaptive_1979,lai_consistency_1981} have been developed to attain efficiency when \(f'(\theta) > 0\). However, when one looks beyond the case where $f'(\theta)>0$ exists, the resulting limiting distributions are often intractable and have received comparatively little attention, thereby leaving several open questions. In this paper, we revisit the stochastic approximation problem and ask:
\textit{Is it possible to achieve greater adaptiveness than that provided by existing (adaptive) stochastic approximation methods while simultaneously constructing valid confidence intervals without assuming prior knowledge of the local regime around the root?}

To address this question, we go beyond Robbins-Monro's procedure and its variants by introducing a novel root-finding algorithm, \emph{sequential probability ratio bisection} (\textsc{SPRB}). We present our theoretical findings and offer a positive answer to the question.

\textit{Main contribution.}
The appeal of our new root-finding algorithm lies in the following various key features:

\begin{enumerate}
    \item If the regression function $f$ is differentiable at $\theta$ and $f'(\theta)>0$ (i.e., Assumption \ref{assumption:differentiable}), we obtain asymptotic normality with parametric rate $n^{-1/2}$ in Theorem \ref{thm:CLT_smooth} and the asymptotic variance matches the minimal variance for \textsc{SPRB}.
    \item If the regression function \( f \) is discontinuous at \( \theta \) (i.e., Assumption \ref{assumption:jump point}), by fully leveraging the non-vanishing signal as \(\{X_n\}_{n\in \mathbb{N}^+}\) converges to \(\theta\), our algorithm achieves exponential convergence $\mathcal{O}_P\left(\exp(-\sqrt{n_k}(\text{poly}(\log n_k)^{-1}))\right)$, as shown in Theorem \ref{thm:jump point}.
    \item Suppose $f$ is differentiable at $\theta$ and its first nonzero derivative is of order $\gamma\in\mathbb{N}^{+}$ (i.e.,
$f(x)=\beta\,\operatorname{sign}(x-\theta)\,|x-\theta|^{\gamma}\bigl(1+o(1)\bigr)$). Requiring the one-sided derivatives of order $\gamma$ to coincide forces $\gamma$ to be odd.  In this higher-order derivative setting, the classical Robbins–Monro procedure converges at the logarithmic rate $(\log n)^{1/(1-\gamma)}$, whereas Theorem~\ref{thm:higher order} shows that \textsc{SPRB} achieves the nearly optimal rate $n^{-1/(2\gamma)+\delta}$ for any $\delta>0$.

    \item Although asymptotic normality is established when the regression function \( f \) is differentiable at \( \theta \) with $f'(\theta)>0$, constructing Wald-type confidence intervals or sequences is often unfavorable and unnecessary. This is primarily because the convergence rate for estimating \( f'(\theta) \)—which appears in the asymptotic variance—is typically slow. For example, it is well-known that in the Robbins-Monro scheme \eqref{eq:RM-SA}, the rate of convergence to estimate
    $f'(\theta)$ is $(\log n)^{-1/2}$. In contrast, our method automatically generates a confidence interval \( \mathcal{I}_k \) for each positive integer \( k \), with the index \( k \) corresponding to the \( k \)-th step of our algorithm. Further details are provided in Section~\ref{section:algorithm}. We further prove the anytime validity of the sequence of confidence intervals \( \{\mathcal{I}_k\}_{k \in \mathbb{N}} \), offering a stronger form of error control. Notably, this aspect is critical for ensuring valid statistical inference once the root-finding algorithm has completed. Due to the broad range of local behaviors of \( f \) at \( \theta \), the limiting distribution is generally intractable ---except in cases where \( f'(\theta) \ge \frac{1}{2\alpha} \) for the Robbins-Monro procedure or \( f'(\theta) > 0 \) for \text{\textsc{SPRB}}. Consequently, we cannot reliably apply a Wald-type result. However, the automatically generated confidence intervals from \text{\textsc{SPRB}} effectively resolve this issue without necessitating additional computations or procedures for estimating the asymptotic variance.

\end{enumerate}

\begin{table}[H]
\centering
\resizebox{\linewidth}{!}{
\scalebox{1}{
\begin{tabular}{|c|c|c|c|c|c|}
\hline
Method & $f'(\theta)>\frac{1}{2}$ & $f'(\theta)=\frac{1}{2}$ & $0<f'(\theta)<\frac{1}{2}$ & $f(x) = \text{sign}(x - \theta)\,\lvert x - \theta \rvert^\gamma(1 + o(1))$ & 
\makecell[c]{discontinuity \\ Assumption \ref{assumption:jump point}} \\
\hline
\makecell[c]{SA \\ \citep{robbins_stochastic_1951}}
& \makecell[c]{
$n^{-1/2}$ \\ \citep{chung_stochastic_1954}}
& \makecell[c]{$\left(\frac{\log n}{n}\right)^{-1/2}$\\
\citep{major_limit_1973}}
& 
\makecell[c]{$n^{-f'(\theta)}$\\
\citep{major_limit_1973}}
& \makecell[c]{$\left(\log n\right)^{\frac{1}{1-\gamma}}$\\
\citep[Section 8.2]{ljung_stochastic_1992}}
& \makecell[c]{$n^{-1}$\\
\citep{lim_convergence_2011}} \\

\hline
\makecell[c]{ASA \\ \citep{lai_adaptive_1979,lai_consistency_1981}} & 
\multicolumn{3}{|c|}{$n^{-1/2}$}
& NA & NA \\
\hline
\makecell[c]{
\textsc{SPRB}\\
\textbf{This work}}& 
\multicolumn{3}{|c|}{
\makecell[c]{
$n^{-1/2}$ \\(Theorem \ref{thm:CLT_smooth})}}
& 
\makecell[c]{
$n^{-\frac{1}{2\gamma}+\delta},\forall \delta>0$
\\
(Theorem \ref{thm:higher order})}
& 
\makecell[c]{
$\exp\bigl(-\sqrt{n}\left(\text{poly}(\log n)\right)^{-1}\bigr)$\\ (Theorem \ref{thm:jump point}) }\\
\hline
\end{tabular}%
}}

\caption{Comparisons between our results on convergence rates for the root-finding problem. For the two instances in which the convergence rate of adaptive stochastic approximation remains open in the literature, see Remark~\ref{remark:ASA} for further details.
}
\label{table:rate}
\end{table}

In the process of addressing the root-finding problem, we derive non-asymptotic bounds for SPRB-related quantities and formulate generalized asymptotic expansions for boundary-crossing problems, building upon the work of \citep{anscombe_large-sample_1949,anscombe_sequential_1953,woodroofe_asymptotic_1987}.

Table~\ref{table:rate} summarizes the convergence rates of Robbins-Monro SA \eqref{eq:RM-SA}, adaptive
stochastic approximation \citep{lai_adaptive_1979}, and our proposed \text{\textsc{SPRB}} algorithm,
under various local properties of the regression function \(f\). The first four columns focus on the case where \(f\) is
pointwise differentiable at its root \(\theta\). Because the phase-transition phenomenon arises for fixed \(\alpha > 0\) whenever
\[
2 f'(\theta)\,\alpha \overset{<}{\ge} 1
\quad \text{or} \quad
f'(\theta) = 0,
\]
we set \(\alpha = 1\) for simplicity, reflecting a common practically agnostic scenario. 
It is important to note that this phase-transition condition generally depends on both \(f'(\theta)\) and \(\alpha\). A more extensive discussion of how the asymptotic variance, the slope \(f'(\theta)\), and the step size \(\alpha\) interplay is deferred to Section~\ref{section:related work}.

%can be found in \citep[Section~3]{kersting_weak_1978}, which examines 
We give several comments on the rate of convergence of \textsc{SPRB} algorithms and the comparison to existing popular methods (SA and ASA) next.

\begin{remark}
We observe two distinct phase transition phenomena in the Robbins-Monro procedure \eqref{eq:RM-SA} as $f'(\theta)$ becomes small or vanishes. When $f'(\theta)$ is sufficiently small (i.e., less than $\frac{1}{2\alpha}$ but greater than $0$), the convergence rate is dominated by the deterministic error \citep{kushner_stochastic_2003}, which is of the order $\mathcal{O}(n^{-\alpha f'(\theta)})$. In this case, the behavior of the Robbins-Monro procedure aligns with that of the corresponding noiseless process:
$
    X_{n+1} = X_n -\frac{\alpha}{n}f(X_n)$.
    
The second phase transition arises under the more challenging conditions, where $f(x)=\text{sign}(x-\theta)|x-\theta|^\gamma(1+o(1))$, where $\gamma>1$, as $x\rightarrow \theta$, and the underlying signal \( f(x) \) decays polynomially with respect to $|x-\theta|$. In this regime, the estimation error of Robbins--Monro's procedure \( X_n - \theta \) decays at the rate 
$
\widetilde{\mathcal{O}}_P\bigl((\log n)^{\frac{1}{(1-\gamma)}}\bigr)$. Meanwhile, \text{\textsc{SPRB}} attains an almost optimal rate of 
$\mathcal{O}_P\bigl(n^{-\frac{1}{2\gamma} + \delta}\bigr)$ for any \(\delta > 0\). For completeness, we refer the reader to Section~\ref{section:auxiliary result} of the supplemental material, where the optimality of the rate $n^{-1/(2\gamma)}$ is heuristically established. Additional discussion on higher-order cases can be found in Section~\ref{subsection:higher-order regression function} and in the remarks following Theorem~\ref{thm:higher order}. Heuristically, the easier root-finding problem arises when the root $\theta$ is a discontinuous jump point, and the signal $|f(x)|$ remains uniformly bounded away from zero as $x\to \theta$. In this case, \textsc{SPRB} converges at a rate of $\mathcal{O}\bigl(\exp(-\sqrt{n})\bigr)$, whereas the stochastic approximation scheme achieves only linear convergence: $\mathcal{O}(n^{-1})$. The linear convergence rate is tight and optimal for the SA scheme. 
\end{remark}
\begin{remark}
\label{remark:ASA}
In the second row of Table~\ref{table:rate}, we review work on adaptive stochastic approximation \citep{lai_consistency_1981,lai_adaptive_1979}. The optimal choice of \( \alpha \)—which minimizes the asymptotic variance while retaining the parametric rate—depends on the unknown parameter \( f'(\theta)\). To achieve adaptivity, \citep{lai_consistency_1981} demonstrated that an adaptive stepsize \( \phi_n \), obtained via a truncated least squares estimate, ensures consistency. Specifically, in adaptive stochastic approximation (ASA), the recursive formula is refined as
\[
    X_{n+1} = X_n - \frac{\phi_n}{n}Y_n,\quad
    \phi_n = 1/\widehat{b}_n,\quad \widehat{b}_n = b_n \vee (\widehat{\beta}_n \wedge B_n),
\]
where $\widehat{\beta}_n$ denotes the least squares estimator for the slope 
$
\beta = f'(\theta)$
based on the observations $\{(X_i,Y_i)\}_{i=1}^{n}$. The sequences $\{b_n\}$ and $\{B_n\}$ are assumed to decrease and increase, respectively, at a rate slower than any polynomial. 
Throughout their discussion, they assume that the derivative $f'(\theta)$ exists and is positive. Although a complete characterization of the asymptotics for ASA in settings where $f'(\theta)=0$ or discontinuities occur presents an interesting challenge, the method they propose is principally designed to exploit the information contained in the first-order derivative. In this paper, we do not pursue this direction; instead, we focus on addressing a root-finding problem via interval section.
\end{remark}

\begin{remark}
\label{remark:other}
We remark on other possible local behaviors of the regression function \(f\) that extend beyond those discussed in Table~\ref{table:rate}. 
In particular, \citep{kaniovski_non-standard_1995} studies a scenario 
in which the signal at the root \(\theta\) is stronger than in the standard linear case \(f'(\theta) > 0\). 
They assume that \(f(x)\) takes the form
$
(\beta_1 + o(1)) (x - \theta)^\gamma \,\mathbbm{1}[x \ge \theta]
\;-\;
(\beta_2 + o(1)) (\theta - x)^\gamma \,\mathbbm{1}[x < \theta]$,
for $\gamma \in \bigl(\tfrac12, 1\bigr)$ and $\beta_1,\beta_2>0$. 
Under these conditions, the Robbins--Monro algorithm~\eqref{eq:RM-SA} 
attains a convergence rate of \(n^{1/(1 + \gamma)}\), 
while its limiting distribution is non-standard under appropriate regularity conditions.

An extreme example arises when the regression function is defined as
\[
f(x) = R(x)\,\mathbbm{1}\{x \geq \theta\} - c\,\mathbbm{1}\{x < \theta\},
\]
where \(c > 0\) and \(R(x)\) satisfies
$\lim_{x \to \theta^+} R(x) = 0$. Indeed, as discussed in \cite{lim_convergence_2011}, this
scenario is rare in practice, and the difficulty of this scenario lies in choosing a level $L$ that ensures a sufficiently well-separated jump such that $\lim_{x\rightarrow \theta^-}f(x)<L$, and  $\lim_{x\rightarrow \theta^+}f(x)>L$. The two preceding scenarios, which are not discussed in Table~\ref{table:rate} nor elsewhere in this work, are somewhat artificial and essentially constitute corner cases.

\end{remark}
\textit{Organization.} In Section~\ref{section:related work}, we survey three somewhat distinct sequential topics: stochastic approximation, tests of power one including various sequential methods (e.g., the sequential probability ratio test), and some recent progress and drawbacks in existing algorithms aimed at addressing the ``slowing-down phase transition'' phenomenon, such as adaptive stochastic approximation (ASA).

Section \ref{section:algorithm} provides an overview of our algorithm \text{\textsc{SPRB}}. In Section \ref{section:asymptotics}, theoretical guarantees are established for three cases:  (\romannumeral 1) a differentiable regression function with a non-vanishing derivative at root (Assumption~\ref{assumption:differentiable}), (\romannumeral 2) a regression function with a few vanishing derivatives (Assumption~\ref{assumption:higher smooth}), and (\romannumeral 3) a discontinuous regression function (Assumption~\ref{assumption:jump point}). We emphasize that whenever we describe the function as differentiable or discontinuous, we are referring specifically to the local behavior of the regression function \( f \) at the root \( \theta \). Section \ref{section:CI} demonstrates the validity of the confidence sequences constructed. 

In Section \ref{section:simulation}, we demonstrate some
important findings in the simulations that verify our theoretical conclusions while the full details and results of the experiments are postponed to Appendix \ref{section:appendix simulation} in the supplemental material. 

Section \ref{section:discussion} offers a concluding discussion of various aspects of our work, including natural extensions - some of which are detailed in the appendix - and outlines future challenges for this line of research.
\subsection{Notation}

We introduce the notation that is used throughout this paper and supplemental material. We use standard stochastic order notation: $o_P(\cdot)$ for convergence in probability and $\mathcal{O}_P(\cdot)$ for boundedness in probability. For scalar sequences $\{a_n\}_{n \geq 1}$ and $\{b_n\}_{n \geq 1}$, the notation $a_n \gtrsim b_n$ (or $a_n \lesssim b_n$) indicates that there exists a universal constant $C > 0$ (or $c>0$) such that $a_n \geq Cb_n$ (or $a_n \leq cb_n$) for all $n \geq 1$. We denote $a_n\asymp b_n$  or $a_n=\widetilde{\mathcal{O}}(b_n)$ when both $a_n \gtrsim b_n$ and $a_n \lesssim b_n$ hold. The symbol $\sim$ means equality when the asymptotic constant is exactly $1$; for example, $\lim_{n\rightarrow \infty} a_n/b_n = 1$ or $\lim_{\theta\rightarrow 0} f(\theta)/g(\theta) = 1$. 
We denote $\overset{d}{\rightarrow}$ and $\overset{P}{\rightarrow}$ as standard weak convergence and convergence in probability. We use $C$ and $c$ to represent universal constants, which may differ across expressions.

For $a\in \mathbb R$,
we use $\lim_{x\rightarrow a^+}$ and $\lim_{x\rightarrow a^-}$ to denote the limits from right and left side of $a$. We use $\text{sign}(a)=a/|a|$ to denote its sign with the convention $\text{sign}(0)=0$. We use $\wedge$ and $\vee$ to denote minimum and maximum respectively. 
\section{Related work}
\label{section:related work}
The root-finding with noisy observations and closely related problems have been extensively studied in statistics, probability, 
and optimization community. Below, we review some selected works that are most pertinent to the present work.

\textit{Stochastic approximation.} In 1951, Robbins and Monro pioneered the field of stochastic approximation. Their seminal paper \citep{robbins_stochastic_1951} immediately drew attention in the statistics community, spurring significant advancements throughout the 1950s and 1960s. \cite{blum_approximation_1954} proved the strong consistency of the RM procedure under simple conditions. \cite{chung_stochastic_1954} investigated the behavior of the sequence \( \{X_n - \theta\} \) and proved asymptotic normality: if $\alpha_n = \frac{\alpha}{n^\gamma}$ for some $\alpha>0$ and $\gamma\in (\frac{1}{2},1)$,
\begin{align}
\label{eq:asymptotics_SA}
    n^{\gamma/2}(X_{n+1}-\theta)\overset{d}{\rightarrow} \mathcal{N}(0,\sigma_{\gamma,\beta}^2),
\end{align}
where 
\begin{align}
\label{eq:asymptotics_SA_variance}
    \sigma_{\gamma,\beta}^2 = 
    \begin{cases}
        \frac{\left(\alpha\sigma\right)^2}{2\beta}&\quad\text{if }\gamma
\in (\frac{1}{2},1)\\
        \frac{\left(\alpha\sigma\right)^2}{2\beta \alpha - 1}&\quad\text{if }\gamma=1
    \end{cases}.
\end{align}

Several authors have worked on the asymptotic distribution of these stochastic approximation algorithms. \cite{sacks_asymptotic_1958} showed that under certain regularity conditions an asymptotically optimal choice of $\alpha_n$ in the Robbins–Monro scheme \eqref{eq:RM-SA} is $\alpha_n \asymp \frac{1}{n f'(\theta_0)}$, for which $\sqrt{n}\left(X_n-\theta_0\right)$ has a limiting distribution $\mathcal{N}(0,\frac{\sigma^2}{\beta^2})$ , where $f'(\theta_0)$ denotes the derivative of $f(\cdot)$ evaluated at $\theta = \theta_0$. Therefore, throughout the paper, we consider the case where $\alpha_n = \frac{\alpha}{n}$ where $\alpha\in \mathbb R^+$ is a constant $\mathcal{O}(1)$ or adaptively updated $\mathcal{O}_P(1)$ constant.

\textit{Adaptive Stochastic Approximation.}
In the differentiable case (i.e., when \( f'(\theta) = \beta \) exists and is positive), it is well known that, under mild regularity conditions, which we summarize in Assumption \ref{assumption:regularity}, the convergence rate and limiting distribution of the stochastic approximation (SA) procedure are sensitive to the choice of the step size parameter \( \alpha \). \cite{venter_extension_1967} proposed a scheme where $\alpha$ is replaced by a consistent sequence of estimators $\alpha_n$, of $\frac{1}{\beta}$. 
Based on the similar idea, as shown in \citep[Theorem 2]{lai_adaptive_1979}, if \( 0 < \alpha < 2\beta \), the Robbins-Monro SA procedure attains the parametric convergence rate and satisfies asymptotic normality, with an asymptotic variance given by  
$\frac{\sigma^2}{\alpha(2\beta - \alpha)}$. Generalizations of these results to adaptive multivariate SA schemes have been provided by \cite{wei_multivariate_1987}. While this line of work achieves the optimal regret  
$
\sum_{i=1}^n (X_i - \theta)^2 \sim \frac{\sigma^2}{\beta^2} \log(n)$,
which quantifies the cost of the design at stage \( n \), the adaptivity arises from sequentially estimating the true slope around the root \( \theta \). However, the practical efficacy of the algorithm critically depends on the convergence rate of the ordinary least squares (OLS) slope estimator \( \widehat{\beta}_n \). The \(\sqrt{\log n}\) rate established in \citep[Theorem 5]{lai_consistency_1981} is far from satisfactory. Heuristically, there exists a fundamental trade-off between the control objective of setting the design points as close as possible to \( \theta \) and the necessity of maintaining sufficient dispersion in the design to ensure an informative estimation of \( \beta \). 
In addition to the slow convergence rate observed in estimating $\beta$, the ASA algorithm fails to accommodate cases in which the first derivative $f'(\theta)$ exhibits pathological behavior. For example, when $f$ is discontinuous or when $f'(\theta)$ vanishes, the algorithm is unable to reliably extract first-order derivative information. This deficiency is understandable, given that ASA updates $\alpha_n$ via a least-squares procedure intended to capture such derivative information.

\textit{Tests of power one.}
In our new algorithm, at every stage, we essentially conduct a test of power one where the null hypothesis \( H_0 \) is \( f(x_t) \geq 0 \) and the alternative hypothesis \( H_1 \) is \( f(x_t) < 0 \). 
Intuitively, data are analyzed as they are collected, and the decisions
to either stop and reach a conclusion, or to continue data collection may depend on what has been
observed so far. Tests with this quality have been called tests of power one \cite{farrell_asymptotic_1964,robbins_expected_1974,lai_power-one_1977}. Some Monte Carlo studies of the expected sample sizes and the error probabilities of these tests are studies in references \citep{darling_further_1968,lai_nonlinear_1977,pollak_approximations_1975}. 

\textit{Sequential statistical inference.} At each stage $t$, the \text{\textsc{SPRB}} algorithm essentially implements a sequential testing procedure, which we refer to as \text{\textsc{StageSampling}}, to determine both the sign and magnitude of $f(X_t)$. Notably, Robbins and colleagues expanded upon Wald's seminal work on sequential testing by extending it to estimation via confidence sequences \citep{darling_confidence_1967,darling_inequalities_1967,darling_further_1968,robbins_selected_2012,robbins_expected_1974,robbins1970boundary}.

\section{The SPRB procedure: An overview}

\label{section:algorithm}

The overall sampling budget is distributed across $k$ stages. We terminate \textsc{SPRB} either when the sample budget $n$ is exhausted or after all $k$ stages have been completed.

At each stage $t\in \mathbb N^+$, where $1 \leq t \leq k$, the corresponding sampling location $X_t$ is repeatedly queried to obtain noisy observations according to the data-generating process in equation~\eqref{eq:DGP}. We seek to sequentially estimate the root $\theta$ of the regression function $f$ using the noisy observations $\{(X_i, Y_i)\}_{i=1}^n$, where $n$ denotes the total sample size accumulated over the first $k$ stages.

We introduce the \textsc{StageSampling} procedure (Algorithm~\ref{alg:stage_sampling}) as follows. For notational simplicity, we suppress the stage index $t$ whenever the result applies uniformly over $t$. At each sampling location $X$, the procedure repeatedly queries $X$ to obtain noisy observations according to equation~\eqref{eq:DGP}
until the \textsc{StageSampling} procedure terminates. The number of samples collected at stage $t$ is denoted by $N_t$. In contrast to the Robbins–Monro procedure in \eqref{eq:RM-SA}, the sample size $N_t$ at location $X_t$ is determined as a stopping time governed by a moving boundary criterion function $T(n,\alpha)$:
\begin{align}
\label{eq:criterion}
N = \inf_{j\in \mathbb{N}^+}\Big\{|S_j|>T(j,\alpha)\Big\},
\end{align}
where
$
S_j := \sum_{i=1}^j Y_i = j\,f(X) + \sum_{i=1}^j \varepsilon_i$. The randomly stopped averages $\{S_{N_t}/N_t\}_{1\leq t \leq k}$ serve as empirical estimates of the regression function at the queried points $X_t$.  As detailed below, we exploit their signs and magnitudes to infer the root $\theta$.

 For clarity of exposition and computational convenience, we restrict our attention to the case in which the moving boundary is defined as
\begin{align}
\label{eq:boundary}
T(j,\alpha_t)= \sigma\sqrt{-2j\log (j+1) \log \alpha_t},
\end{align}
where \(\alpha_t = \alpha 2^{-t}\) and \(\alpha>0\) is a hyperparameter that will be chosen later.
For simplicity, we focus on the case where the noise level $\sigma$ is known. In the case where $\sigma$ is unknown \emph{a priori}, it can be sequentially estimated via its corresponding empirical estimator at a fast rate, provided that the noise is assumed to be homoskedastic.

 In our algorithm, at stage $t$, we continue to sample at $X_t$ until the criterion of stopping time~\eqref{eq:criterion} is invoked. A general recipe for choosing \(T(j,\alpha_t)\) is to ensure that 
\(\sigma\sqrt{2j \log\log j} \ll T(j,\alpha_t) \ll j\) for fixed $\alpha_t>0$. 
The first inequality $T(j,\alpha_t)\gg \sigma\sqrt{2j\log \log j}$ is motivated by the observation that the sampling location \(X\) may approach arbitrarily close to the root \(\theta\); hence, the growth of boundary $T(j,\alpha_t)$ must outpace the rate prescribed by the law of the iterated logarithm to effectively control the probability of a “false sign”: for example, if $x>\theta$, thus $f(x)>0$,
the probability of incorrectly determining the sign of the regression function at the point $x$:
\begin{align*}
   \bbP\Big(S_{N_t}<0\Big)\leq \inf_{f(x)>0}\bbP\Big( \forall j\in \bbN^+:S_j<-T(j,\alpha_t)\Big)\leq \alpha_t,
\end{align*}
where the sum $S_j \asymp jf(x) + \sigma\sqrt{2j\log \log j}$.
The second inequality $T(j,\alpha_t)\ll j$ guarantees that \(N_k < \infty\) almost surely for any \(f(X)\neq 0\). For completeness, we present the proof in Section~\ref{section:auxiliary result}.

%\begin{remark} 
%\label{remark:finiteness}
%\end{remark}

\begin{algorithm}[H]
\caption{\text{\textsc{StageSampling}($x,t$)}}
\label{alg:stage_sampling}
\KwIn{Sampling location \(x\in \mathcal{I}\), error-control parameter $\alpha>0$, stage number $t\in \mathbb N^+$}

\( S_t \gets 0 \)\\
\( N_t \gets 0 \)\\
\( \alpha_t \gets \alpha2^{-t} \)

\While{\( \lvert S_t \rvert \leq T(N_t,\alpha_t) \)}{
  \( N_t \gets N_t + 1 \)\\
  \( Y_{N_t} \gets \textsc{Sample}\big(x,1\big)\)\\
  \( S_t \gets S_t + Y_{N_t} \)
}
\KwOut{\(\displaystyle \frac{S_t}{N_t} \) and \(N_t\)}
\end{algorithm}

In all algorithms, we denote by 
\(\textsc{Sample}(x,m)\) the operation of drawing \(m\) independent samples at the point \(x\).

\begin{remark}
    We adopt the \textsc{StageSampling} procedure (Algorithm~\ref{alg:stage_sampling}), a variant of the sequential probability ratio test (SPRT), to adaptively determine the number of observations \(N_t\) collected at each design point \(X_t\). By repeatedly sampling at \(X_t\) and accumulating sufficient statistics \(S_t\), then comparing \(\lvert S_t\rvert\) against the moving boundary \(T(N_t,\alpha_t)\), \textsc{StageSampling} both identifies the sign of the noisy response \(f(X_t)\) (by Assumption~\ref{assumption:regularity}) and accurately estimates its magnitude, stopping once there is enough evidence even if the signal is weak. As shown in Proposition~\ref{thm:EN}, the expected sample size scales with \(\lvert f(X_t)\rvert\) and the confidence level \(\alpha_t\), and—following \citep{farrell_asymptotic_1964}—choosing \(T(j,\alpha_t)\) appropriately yields the minimal expected \(N_t\). We further conjecture that, under mild regularity conditions, our framework remains valid for a broader class of moving boundaries. For a comprehensive treatment of SPRT and its optimal stopping theory, see \citep{wald_optimum_1948,arrow_bayes_1949,siegmund_herbert_2003}.
\end{remark}
Next, we introduce the \textsc{Update} algorithm, which, after $t$-th stage, updates the subsequent sampling location $X_{t+1}$, the interval $\mathcal{I}_{t+1}:=[X_{\ell,t+1},X_{r,t+1}]$, the number of samples used corresponding to $X_{\ell,t+1}$ and $X_{r,t+1}$
based on the history information up to the stage $t$:
$
\bm{H}_t = \bigl\{ X_{\ell t'}, X_{r t'}, \widehat{f}_{\ell t'}, \widehat{f}_{r t'}, N_{\ell t'}, N_{r t'} \bigr\}_{1 \leq t' \leq t}.
$ 

We set the initial interval to $[X_{\ell 1},\,X_{r1}]\gets \mathcal{I}$ (e.g., $[0,1]$) and select preliminary values at the two points $\widehat{f}_{\ell 1}$ and $\widehat{f}_{r 1}$ satisfying $\widehat{f}_{\ell 1}<0$ and $\widehat{f}_{r1}>0$ (e.g., $-1$ and $1$, respectively). At stage $t$, we define $\mathcal{I}_t=[X_{\ell t},\,X_{r t}]$ and interpret it as a confidence interval containing the root $\theta$ with high probability.

To improve finite‐sample performance, we begin with a bisection update for the first few iterations:
\begin{equation}\label{eq:bisection:update}
X_{t+1}
=\tfrac12\bigl(X_{\ell t}+X_{rt}\bigr).
\end{equation}
Once the interval width satisfies 
$X_{rt}-X_{\ell t}<\delta$,
for a prespecified threshold $\delta>0$, we switch to the weighted‐section update:
\begin{equation}\label{eq:update}
X_{t+1}
=\frac{\widehat f_{\ell t}\,X_{rt}-\widehat f_{rt}\,X_{\ell t}}
{\widehat f_{\ell t}-\widehat f_{r t}}.
\end{equation}

\begin{algorithm}[H]
\caption{$\textsc{Update}(\bm H_t,\text{rule})$}
\label{alg:bisection}
\KwIn{History information $\bm H_t$, updating rule $\in\big\{\text{bisection},\text{weight section}\big\}$, stage number $t$.}
\eIf{\rm{updating rule} = \rm{bisection}}
{$
X_{t+1} \gets \frac{1}{2}\Big(X_{\ell t} + X_{rt}\Big)$}
{$
X_{t+1} \gets \displaystyle\frac{\widehat{f}_{\ell t}\, X_{rt} - \widehat{f}_{rt}\, X_{\ell t}}{\widehat{f}_{\ell t} - \widehat{f}_{rt}}$}
$\widehat{f}_{t+1}, N_t\gets \textsc{StageSampling}\bigl(X_{t+1},t\bigr)$\\
\eIf{$\widehat{f}_{t+1}>0$}{
$N^\sharp_{\ell t} \gets (\log_2(t+1) - 1) N_{\ell t}$\\
$f^\sharp_{\ell t}\gets \textsc{Sample}(X_{\ell t}, N^\sharp_{\ell t})$\\
\tcp{Check whether $\mathcal{I}_{t}\owns\theta$}
\eIf{$f_{\ell t}^\sharp \widehat{f}_{\ell t}>0$}{
$X_{r,t+1}\gets X_{t+1}$ and $X_{\ell,t+1}\gets X_{\ell t}$\\
$\widehat{f}_{\ell,t+1}\gets\displaystyle \frac{1}{N_{\ell t} + N_{\ell t}^\sharp}\Bigl(N_{\ell t}\widehat{f}_{\ell t} + N_{\ell t}^\sharp f_{\ell t}^\sharp\Bigr)$ and  
$\widehat{f}_{r,t+1}\gets \widehat{f}_{t+1}$\\
}{
$X_{r,t+1}\gets X_{\ell t}$ and $X_{\ell,t+1}\gets X_{\ell,t-1}$\\
$\widehat{f}_{r, t+1}\gets f_{\ell t}^\sharp$ and $\widehat{f}_{\ell,t+1}\gets \widehat{f}_{\ell,t-1}$}
$N_{\ell t} \gets N_{\ell t}^\sharp$
}{Implement the same program after swapping the role of $\ell$ and $r$.}

\KwOut{$\bm H_{t+1}$}
\end{algorithm}
%Let \(Y_{it}\) for \(i = 1, 2, \ldots, N_t\) denote the \(i\)-th sample drawn at the location \(X_t\), and let \(\varepsilon_{it}\) for \(i = 1, 2, \ldots, N_t\) denote the corresponding statistical errors. 

In the subsequent discussion, we refer to \eqref{eq:update} as the \emph{weight-section formula}. To build intuition, consider the noiseless case (i.e., $Y=f(X)$)
where the regression function is linear with positive slope, namely, \(f(x) = \beta(x-\theta)\) with \(\beta>0\). In this setting, the weight-section formula yields
\[
X_{t+1} = \frac{\beta(X_{\ell t}-\theta)X_{rt}-\beta(X_{rt}-\theta)X_{\ell t}}{\beta(X_{\ell t}-\theta)-\beta(X_{rt}-\theta)} = \theta,
\]
thereby exactly recovering the root \(\theta\).

Then we start by sampling at $X_{t+1}$ and obtain $\widehat{f}_{t+1}=\textsc{StageSampling}(X_{t+1},t)$.  At this step, it is natural to consider a simplified update algorithm, in which we sample at $X_{t+1}$ and update the interval $\mathcal{I}_{t+1}$ based solely on the sign of $\widehat{f}_{t+1}$. Specifically, if $\widehat{f}_{t+1} > 0$, we update the right endpoint by setting $X_{r,t+1} = X_{t+1}$, while keeping the left endpoint unchanged, i.e., $X_{\ell,t+1} = X_{\ell t}$. However, this procedure may potentially increase the asymptotic variance of $\widehat{f}_{\ell,t+1}$; therefore, additional sampling at $X_{\ell t}$ is needed.

To circumvent this issue, we update our estimate of $f_{\ell t} = f(X_{\ell t})$ (or $f_{r t} = f(X_{r t})$ if $\widehat{f}_{t+1} < 0$) and replace \(\widehat{f}_{\boldsymbol\cdot,t}\) with \(f^{\sharp}_{\boldsymbol\cdot,t}\) after collecting additional $N_{\boldsymbol{\cdot} t}^\sharp$ samples at the opposite endpoint $X_{\boldsymbol{\cdot}t}$, where $\boldsymbol{\cdot}\in \{\ell,r\}$. For example, if \(\widehat{f}_{t+1} > 0\), we draw an extra \(N_{\ell t}\tau_{t+1}\) samples at \(X_{\ell t}\) to reduce variance. Here, we set \(\tau_t \to \infty\) and \(\tau_t = o(t)\) to guarantee that
    $
    N_t \ll N_t^\sharp = N_t (1+\tau_t) \ll N_{t+1}$,
    with high probability, where the second inequality holds only when the regression function is differentiable at the root $\theta$. For simplicity, we set \(\tau_t = \log_2(t+1)- 1\), so that \(N_t^\sharp = \log_2(t+1) \, N_t\). 
As illustrated in Figure~\ref{fig:demo_1}, if the sign of the left endpoint in the current iteration is consistent with that recorded in the history $\bm{H}_t$, that is,
$
f^\sharp_{\ell t}\,\widehat{f}_{\ell t} > 0,
$
(which occurs with high probability), then we update the interval as
$
\mathcal{I}_{t+1} = [X_{\ell,t+1}, X_{r,t+1}] \gets [X_{\ell t}, X_{t+1}],
$
and the estimates by setting
$
\widehat{f}_{\ell,t+1}\gets \frac{1}{N_{\ell t} + N_{\ell t}^\sharp} \Bigl( N_{\ell t}\,\widehat{f}_{\ell t} + N_{\ell t}^\sharp\, f_{\ell t}^\sharp \Bigr)$ and $
\widehat{f}_{r,t+1}\gets \widehat{f}_{t+1}$.
Otherwise, as described in Figure~\ref{fig:demo_2}, we update the interval as
$
[X_{\ell,t+1}, X_{r,t+1}] \gets [X_{\ell,t-1}, X_{\ell t}],
$
with
$
\widehat{f}_{r,t+1} \gets f_{\ell t}^\sharp $ and $\widehat{f}_{\ell,t+1} \gets \widehat{f}_{\ell,t-1}.
$ 

In the subsequent discussion and analysis, we adopt the following notational convention: for each stage $1 \leq t \leq k$, we let $N_t^\sharp$ denote the number of additional samples collected at the \emph{opposite} endpoint $X_{\boldsymbol{\cdot},t}$, without explicitly specifying whether it is the left or right endpoint. Likewise, we use $f_t^\sharp$ to represent the corresponding sample mean of responses at that endpoint, computed using the $N_t^\sharp$ observations.

\begin{figure}[htb]
  \centering
  % First subfigure: left picture
  \begin{subfigure}[b]{0.42\linewidth}
    \centering
    \begin{tikzpicture}
      \begin{axis}[
          axis lines=middle,
          xlabel={$x$},
          ylabel={$f(x)$},
          yticklabels={},  % Removes y-axis tick labels
          xticklabels={},  % Removes x-axis tick labels
          axis y line=none,  % Removes y-axis line
          ymin=-3, ymax=3,
          xmin=-3, xmax=3,
          samples=100,
          domain=-3:3,
      ]
          % Plot the function
          \addplot[smooth, thick, blue!60!black] {sin(deg(x)) + 0.5*x};  
          
          % Mark X_{\ell t} at (-1, 0)
          \addplot[only marks, mark=*, mark options={red!60!black}, mark size=2pt] coordinates {(-1, 0)};
          \node at (axis cs:-1,-0.3) [anchor=north, color=red!60!black] {$X_{\ell t}$};
          
          % Mark X_{r t} at (2.2, 0)
          \addplot[only marks, mark=*, mark options={red!60!black}, mark size=2pt] coordinates {(2.2, 0)};
          \node at (axis cs:2,-0.3) [anchor=north, color=red!60!black] {$X_{r t}$};
          
          % Mark X_{t+1} at (0.5, 0)
          \addplot[only marks, mark=*, mark options={red!80!orange}, mark size=2pt] coordinates {(0.5, 0)};
          \node at (axis cs:0.5,-0.3) [anchor=north, color=red!80!orange] {$X_{t+1}$};
          
          % Label theta at (0,0)
          \node at (axis cs:0,0) [anchor=north west] {$\theta$};
          
          % Draw a curved arrow from X_{\ell t} to X_{t+1}
          \draw[->, thick, red!90!black, dotted, >=latex, line width=1pt] 
              (axis cs:-1,0) to[out=-60,in=-120] (axis cs:0.5,0);
          
          % Draw a thick line (semi-bracket) indicating the interval \mathcal{I}_{t+1}
          \draw[red!80!orange, very thick] (axis cs:-1,0) -- (axis cs:0.5,0);
          \node at (axis cs:-0.25,0.6) [anchor=north, color=red!80!orange] {$\mathcal{I}_{t+1}$};
      \end{axis}
    \end{tikzpicture}
    \caption{}
    \label{fig:demo_1}
  \end{subfigure}\hfill
  % Second subfigure: right picture
  \begin{subfigure}[b]{0.5\linewidth}
    \centering
    \begin{tikzpicture}
      \begin{axis}[
          axis lines=middle,
          xlabel={$x$},
          ylabel={$f(x)$},
          yticklabels={},  % Removes y-axis tick labels
          xticklabels={},  % Removes x-axis tick labels
          axis y line=none,  % Removes y-axis line
          ymin=-3, ymax=3,
          xmin=-3, xmax=3,
          samples=100,
          domain=-3:3,
      ]
          % Plot the function
          \addplot[smooth, thick, blue!60!black] {sin(deg(x)) + 0.5*x};  
          
          % Mark X_{\ell t} at (0.8, 0)
          \addplot[only marks, mark=*, mark options={red!60!black}, mark size=2pt] coordinates {(0.8, 0)};
          \node at (axis cs:0.8,-0.3) [anchor=north, color=red!60!black] {$X_{\ell t}$};
          
          % Mark X_{r t} at (2.2, 0)
          \addplot[only marks, mark=*, mark options={red!60!black}, mark size=2pt] coordinates {(2.2, 0)};
          \node at (axis cs:2.2,-0.3) [anchor=north, color=red!60!black] {$X_{r t}$};
          
          % Mark X_{\ell,t-1} at (-2, 0)
          \addplot[only marks, mark=*, mark options={red!60!black}, mark size=2pt] coordinates {(-2, 0)};
          \node at (axis cs:-2,-0.3) [anchor=north, color=red!60!black] {$X_{\ell,t-1}$};
          
          % Mark X_{t+1} at (1.5, 0)
          \addplot[only marks, mark=*, mark options={red!80!orange}, mark size=2pt] coordinates {(1.5, 0)};
          \node at (axis cs:1.5,-0.3) [anchor=north, color=red!80!orange] {$X_{t+1}$};
          
          % Label theta at (0,0)
          \node at (axis cs:0,0) [anchor=north west] {$\theta$};
          
          % Draw a curved arrow from X_{\ell t} to X_{t+1}
          \draw[->, thick, red!90!black, dotted, >=latex, line width=1pt] 
              (axis cs:0.8,0) to[out=60,in=120] (axis cs:1.5,0);
          
          % Draw a thick line (semi-bracket) indicating the interval \mathcal{I}_{t+1}
          \draw[red!80!orange, very thick] (axis cs:-2,0) -- (axis cs:0.8,0);
          \node at (axis cs:-0.6,0.6) [anchor=north, color=red!80!orange] {$\mathcal{I}_{t+1}$};
      \end{axis}
    \end{tikzpicture}
    \caption{}
    \label{fig:demo_2}
  \end{subfigure}
  \caption{An illustration of \textsc{Update($\bm H_t$)}}
  \label{fig:demo}
\end{figure}
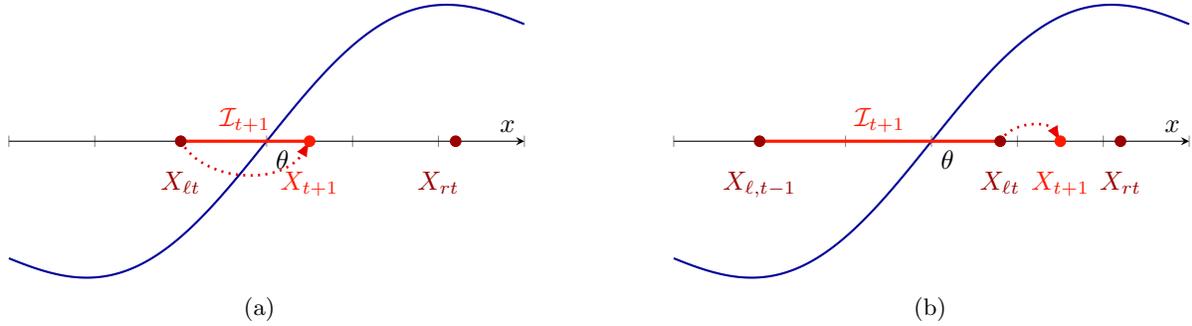
In Figure~\ref{fig:demo},
we illustrate the iterative update of intervals $\mathcal{I}_t$ around the root $\theta$. Plots (a) and (b) respectively describe two scenarios for transitioning from the current interval $\mathcal{I}_t=[X_{\ell 
 t},X_{rt}]$ to the next interval $\mathcal{I}_{t+1}$. \textit{Note that by construction the estimators $\widehat{f}_{\boldsymbol{\cdot},t}$ satisfy
$
\widehat{f}_{\ell t} < 0 < \widehat{f}_{rt}$ for $1 \leq t \leq k.
$}
Consequently, both the \textit{bisection} and \textit{weight-section} procedures yield a new sampling point $X_{t+1} \in \mathcal{I}_t$.

 In plot (a), when the root $\theta$ lies within the interval $\mathcal{I}_t$, the procedure, with high probability, generates an interval $\mathcal{I}_{t+1}$ that also contains $\theta$. Conversely, in plot (b), when the root $\theta$ does not lie in $\mathcal{I}_t$ but is contained in $\mathcal{I}_{t-1}$, it is highly probable that $\mathcal{I}_{t+1}$ recovers the root via sampling at $X_{\ell,t-1}$.

We now present our principal algorithm. For the sake of expository clarity, we first introduce a simplified variant, \textsc{SPRB-Basic}, which attains identical asymptotic behavior when the regression function satisfies Assumption~\ref{assumption:differentiable} or Assumption~\ref{assumption:jump point}. In Section~\ref{subsection:higher-order regression function}, we describe the full version of \textsc{SPRB}, which is also adaptive to the situation in which the first-order derivative vanishes (Assumption~\ref{assumption:higher smooth}).

We choose the error‐control hyperparameter $\alpha$ in \textsc{StageSampling} to regulate the validity error of the confidence sequences 
$
\{\mathcal{I}_t = [X_{\ell t}, X_{rt}]\}_{t=1}^k$. \textit{
Importantly, for valid inference, our procedure does not rely on 
a limiting distribution or on calibration based on the convergence rate.} Regarding convergence as $k\rightarrow \infty$, all asymptotic results stated in Section~\ref{section:asymptotics} remain valid for any choice of \(\alpha\).
\begin{enumerate}
    \item For 
    \(
    |X_{\ell k} - X_{rk}| > \delta,
    \)
    we perform a bisection step, starting with the initial given interval
    \(
    [X_{\ell 1},X_{r1}] = \mathcal{I}.
    \)
    \item 
    For $|X_{\ell t}-X_{rt}|\leq \delta$, we update the next sampling location via weight-section.
\end{enumerate}

\begin{algorithm}[H]
\caption{\textsc{SPRB-Basic}}
\KwIn{tolerance level \( \Delta\in (0,\frac{1}{2})\), initialization interval $\mathcal{I}$}
$\alpha\gets \Delta/3 $\\
\tcp{Initialization}
$[X_{\ell 1},X_{r1}]\gets\mathcal{I}$\\
%$\widehat{f}_{\ell 1} = \textsc{StageSampling}(X_{\ell 1},1)\text{ and }
%\widehat{f}_{r1} = \textsc{StageSampling}(X_{r1},1)$\;
\While{$t\leq k$ }{
\eIf{$|X_{rt}-X_{\ell t}|>\delta$}
{$\bm H_{t+1}\gets$\textsc{Update}($\bm H_t$,\text{bisection})}{$\bm H_{t+1}\gets$
\textsc{Update}($\bm H_t$,\text{weight-section})
}
}
\KwOut{$X_{k+1}$}
\end{algorithm}

The final output, $X_{k+1}$, serves as our proposed estimator for the root $\theta$ after $k$ stages of sampling in \textsc{SPRB-Basic}.

\begin{remark}
\label{remark:mixture}
We provide a heuristic explanation for why relying exclusively on bisection may lead to slower convergence, thereby necessitating the inclusion of weight-section.

We examine the scenario in which the \textsc{Update} procedure is executed exclusively via the bisection algorithm. Let \(B_{t+1}\in \mathcal{I}\) denote the exclusive-bisection algorithm's estimate after \(t\) stages.
After $t$ stages, the bisection estimate converges to the root $\theta$ at an exponential rate in $t$:
\[
B_{t+1} - \theta = \mathcal{O}_P\bigl(2^{-t}\bigr).
\]
  By Proposition~\ref{thm:EN},
  the total sample size is
  $
    n := \sum_{t=1}^k N_t \;\asymp\; \sum_{t=1}^k \frac{t}{f(X_t)^2}\,\log\!\Bigl(\frac{t}{f(X_t)^2}\Bigr)$,
  where \(f(X_t)\) is of the same order as \(B_{k+1}-\theta\) if $f(x)$ is differentiable with positive derivative at the root $\theta$ (as described in Assumption~\ref{assumption:differentiable}). Consequently, we obtain that the total sample size used to construct $B_{k+1}$ is 
  \begin{align*}
      n\gtrsim \sum_{t=1}^k \frac{t}{\beta^2 2^{-2t}}\log\Big(\frac{t}{\beta^2 2^{-2t}}\Big) \asymp 
      \frac{1}{\beta^2}k^2 4^k\gg (B_{k+1}-\theta)^{-2},
  \end{align*}
  where the last step is due to the extra $k^2$ term. Thus, superexponential convergence of $X_{k+1}$ with respect to stage \(k\), which \textsc{SPRB} can accomplish, is necessary to achieve \(\sqrt{n}\)-consistency. 
\end{remark}

\begin{remark} 
The rationale behind the name \textsc{SPRB} is that our method leverages the sequential probability ratio test (SPRT) framework to determine the number of samples to collect at each stage $t$. The term ``bisection” in the method’s name reflects that the update for the sampling location \(X_{t+1}\) is performed via section-formula.
\end{remark}
For every stage \(t\), the \textsc{SPRB} algorithm uses a stopping time \(N_t\) rather than a fixed, deterministic sample size. A potential concern is that if a predetermined budget is exhausted before the \(n\)th observation is reached, the update for stage \(t+1\) may be halted, which might seem to result in a “waste of samples”. However, as the theoretical analysis in Section~\ref{section:asymptotics} demonstrates, our procedure attains fully adaptive, rate-optimal performance  under all considered scenarios in terms of the \textit{total sample size} $n:=\sum_{t=1}^k N_t$ we need for $X_{t+1}$. Consequently, any apparent inefficiency in sample usage does not affect the asymptotic behavior of the method.

%Second, in practice, a viable remedy involves the use of bootstrap-based methods. In particular, a nonparametric bootstrap can be readily incorporated into our framework, mainly because the dependence among samples at each stage is relatively mild despite the stopping-time design.

%From a methodological standpoint, to the best of our knowledge, tailoring an (adaptive) stochastic approximation procedure to accommodate bootstrap resampling remains a highly non-trivial, open research problem.
%\ndyy{A description of Bootstrap 
%}
%\subsection{Bootstrap \textsc{SPRB}}
%\ndyy{What's the correct way to do bootstrap here? and lit survey on the bootstrap SPRT and corresponding Robbins-Monro procedure if possible.}
%
%\begin{algorithm}[H]
%\caption{boot-\textsc{SPRB}}
%\label{alg:nonparametric_bootstrap}
%\KwIn{A sample of size \( n \), denoted as \(\{X_{i,k}\}\) where \(1\leq i\leq N_k\), and the number of bootstrap samples \(B\). \tcp*[h]{Here we only bootstrap the last stage.}}
%\KwOut{A collection of bootstrap estimates \(\{\widehat{X}_{k+1}^{(b)}\}\).}
%
%\For{\( b \gets 1 \) \KwTo \( B \)}{
%    Draw a bootstrap sample of size \( n \) from the original sample (i.e., sample \( N_k \) observations \textit{with replacement} from \(\{X_{1,k},\ldots,X_{N_k}\}\)) and denote this bootstrap sample as \(\bm{X}^{(b)}\)\;
%    Compute the \( t \)-statistics on the bootstrap sample \(\bm{X}^{(b)}\): \(\widehat{f}_k^{(b)}\)\;
%    Plug \(\widehat{f}_k^{(b)}\) and obtain \(\widehat{X}_{k+1}^{(b)}\)\;
%}
%\Return \(\{\widehat{X}_{k+1}^{(b)}\}\)\;
%\end{algorithm}
\section{Asymptotic theory}
\label{section:asymptotics}

We consider the general regression model \eqref{eq:DGP} and assume the following mild conditions to characterize the root-finding problem. 
\begin{assumption}\label{assumption:regularity}
The regression function $f: \mathcal{I}\subseteq\bbR\to\mathbb{R}$ is a measurable function satisfying the following conditions:
\begin{enumerate}
    \item The errors $\varepsilon_i$ are assumed to be independent and identically distributed centered sub-Gaussian random variables with variance $\sigma^2$ and $\|\varepsilon\|_{\psi_2}\leq M$.
    \item In the interval 
    $\mathcal{I}$, the regression function $f(x)$ satisfies the sign requirement and has a unique root $\theta \in \mathcal{I}\subseteq \mathbb{R}$, namely,
    \begin{align}
    \inf_{\substack{\eta \leq |x-\theta| \leq \frac{1}{\eta} \\ x \in \mathcal{I}}} f(x)(x-\theta) > 0 \quad \text{for all } 0 < \eta < 1.
    \end{align}
    \item There exist positive constants $C_1$ and $C_2$ such that
    \begin{align}
        |f(x)| \leq C_1 |x| + C_2, \quad \text{for all } x \in \mathcal{I}.
    \end{align}
\end{enumerate}
\end{assumption}

In Assumption~\ref{assumption:regularity}, conditions 2 and 3 are standard regularity conditions to make the root-finding problem well-defined and imply strong consistency of $X_n$ under the Robbins-Monro procedure \eqref{eq:RM-SA}; see,
for instance, \citep{blum_approximation_1954},\citep[Theorem~3.2]{kersting_weak_1978}, and \citep{lai_adaptive_1979}. For convenience, we denote $B$ as the upper bound of $f(x)$ where $x\in \mathcal{I}$. For clarity of exposition, we assume a sub-Gaussian noise model, though extending our analysis to other tail conditions is straightforward.

\subsection{Randomly stopped average}
\label{subsection:RSA}
We first show a CLT-type result for a triangular array \(\{Y_{it}\}\), for $1\leq t\leq k$ with randomly stopped times as follows.
For each stage $t$, we observe 
$Y_{it}=\mu_t+\varepsilon_{it}$, $i\in\mathbb{N}^+$, 
where the errors $\{\varepsilon_{it}\}_{i\ge 1}$ satisfy the noise condition in Assumption~\ref{assumption:regularity} and are independent of 
$\{\mu_{t'}\}_{1\le t'\le t}$ and of each other. 
Define the filtration
$\mathcal{F}_t=\sigma\bigl(\{\varepsilon_{it'},\mu_{t'}:i\ge 1,\,1\le t'\le t\}\bigr)$ for $1\le t\le k$. 
Each $\mu_t$ is $\mathcal{F}_{t-1}$-measurable and lies in $[-B,B]$, with $\mu_t\neq 0$ almost surely for every $t$. Define
$
S_{jt} = \sum_{i=1}^j Y_{it}$,
and let \(N_t\) denote the first boundary crossing time:
$
N_t = \inf_{j\in \mathbb{N}^+} \{ |S_{jt}| > T(j,\alpha_t) \}$,
where the boundary function \(T(j,\alpha_t)\) is defined in \eqref{eq:boundary} with \(\alpha_t = \alpha2^{-t}\). Our primary interest is in the limiting distribution of the randomly stopped average $
M_t := \frac{S_{N_t,t}}{N_t}$.
%For notational simplicity, we write \(S_{N,t}\) in place of \(S_{N_t,t}\). 

The following proposition provides a nonasymptotic characterization of the expected stopping time $\mathbb{E}N_t$. We delay the proof of Proposition~\ref{thm:EN} to the supplemental material.

\begin{proposition}
\label{thm:EN}
There exist a universal constant $K\in \mathbb N^+$ and two sequences of constants $\{c_{\mu_t}\},\{C_{\mu_t}\}\subseteq \bbR^+$ such that $\delta_t := \delta(\mu_t,\alpha_t) = -2\sigma^2\mu_t^{-2}\log \alpha_t>0$ satisfies
\begin{align}
\label{eq:N_order}
    c_{\mu_t} \delta_t\log (\delta_t+1)< \bbE N_t< C_{\mu_t}  \delta_t\log (\delta_t+1),\quad\forall t\geq K.
\end{align}
Furthermore, if $\mu_t\rightarrow 0$ almost surely as $t\rightarrow\infty$, the sequences can be chosen such that $ c_{\mu_t},C_{\mu_t} \rightarrow 1$. As a consequence, we have
\begin{align}
\label{eq:ratio-1}
    \lim\limits_{t\rightarrow \infty} \frac{\bbE N_t}{\delta_t \log(\delta_t + 1)} = 1.
\end{align}
\end{proposition}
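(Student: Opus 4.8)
The plan is to adapt the classical curved-boundary sample-size analysis of Farrell~\citep{farrell_asymptotic_1964}, sharpened so as to track the leading constant. Working conditionally on $\mathcal{F}_{t-1}$ (so that $\mu_t$, and hence $\delta_t$, is a fixed number) and using the symmetry of the boundary, I take $\mu_t>0$ without loss of generality. Write $S_{jt}=j\mu_t+W_{jt}$ with $W_{jt}=\sum_{i=1}^{j}\varepsilon_{it}$, set $g(j):=T(j,\alpha_t)=\sigma\sqrt{2j\log(j+1)\,a_t}$ with $a_t:=-\log\alpha_t\to\infty$, and decompose $N_t=\min(\tau^+,\tau^-)$ into the first crossings of $\pm g$. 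The deterministic anchor is $j_0=j_0(t)$, the solution of $j=\delta_t\log(j+1)$ obtained by balancing drift against boundary, $j\mu_t=g(j)$; asymptotic inversion gives $j_0\sim\delta_t\log(\delta_t+1)$ as $\delta_t\to\infty$. Setting $h(j):=g(j)-j\mu_t$, one has $h>0$ on $(0,j_0)$ and $h<0$ on $(j_0,\infty)$, and a direct computation yields $g'(j_0)=\tfrac{\mu_t}{2}(1+o(1))$, hence $h(j)\approx-\tfrac{\mu_t}{2}(j-j_0)$ near the anchor; this slope is what produces the concentration of $N_t$ about $j_0$.

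For the upper bound I use $\mathbb{E}N_t\le\mathbb{E}\tau^+=\sum_{j\ge0}\mathbb{P}(\tau^+>j)$ and split at $(1+\epsilon)j_0$. The initial block contributes at most $(1+\epsilon)j_0$. For $j>(1+\epsilon)j_0$, the event $\{\tau^+>j\}$ forces $S_{jt}\le g(j)$, so $\mathbb{P}(\tau^+>j)\le\mathbb{P}(W_{jt}\le-|h(j)|)\le\exp(-c|h(j)|^2/(M^2 j))$ by sub-Gaussianity. Since $|h(j)|^2/j\gtrsim\mu_t^2 j_0\,u^2\asymp a_t\log\delta_t\,u^2$ at $j=(1+u)j_0$, the tail sum is $o(j_0)$ once $\epsilon^2 a_t\log\delta_t\to\infty$. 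The key point is that the leading term $j_0$ arises from the deterministic balance alone, so only a crude sub-Gaussian tail (with proxy $M$, not the exact variance) is required here.

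For the lower bound I write $\mathbb{E}N_t\ge(1-\epsilon)j_0\,\mathbb{P}\big(N_t>(1-\epsilon)j_0\big)$ and bound the complement by the early lower- and upper-crossing probabilities. The early lower crossing is handled by the anytime-valid calibration of the boundary, $\mathbb{P}(\exists j:\,W_{jt}<-g(j))\le\alpha_t\to0$ (exactly the ``false sign'' bound of Section~\ref{section:algorithm}, via $e^{-a_t}=\alpha_t$). The early upper crossing $\mathbb{P}(\tau^+\le(1-\epsilon)j_0)$ requires $W_{jt}$ to exceed the strictly positive boundary $h(j)\ge h((1-\epsilon)j_0)\approx\tfrac{\mu_t}{2}\epsilon j_0$; a peeling (maximal) bound over $j\le(1-\epsilon)j_0$ again gives decay $\exp(-c\,\epsilon^2 a_t\log\delta_t)\to0$. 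A fixed $\epsilon$ yields the two-sided estimate with finite $c_{\mu_t},C_{\mu_t}$ for all $t\ge K$ (with $K$ chosen so that $a_t$ is large enough, uniformly in $\mu_t$). To reach $c_{\mu_t},C_{\mu_t}\to1$ when $\mu_t\to0$ (whence $\delta_t\to\infty$), I let $\epsilon=\epsilon_t=(a_t\log\delta_t)^{-1/4}\to0$, so that $\epsilon_t^2 a_t\log\delta_t=(a_t\log\delta_t)^{1/2}\to\infty$ keeps both tails negligible; since the relative fluctuation of $N_t$ is of order $(a_t\log\delta_t)^{-1/2}\to0$, the two bounds squeeze to $\mathbb{E}N_t/j_0\to1$, and $j_0\sim\delta_t\log(\delta_t+1)$ gives \eqref{eq:ratio-1}.

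I expect the main difficulty to be the sharp matching: letting $\epsilon_t\to0$ while keeping $\mathbb{P}(\tau^+\le(1-\epsilon_t)j_0)$ negligible against the curved boundary $h$, whose minimum over $(0,j_0)$ sits precisely at the anchor, so the peeling argument must be quantitative rather than a lossy union bound. A second delicate point is confirming that the overshoot $S_{N_t}-g(N_t)$ and the boundary curvature perturb $j_0$ only by $o(j_0)$ — which holds because $g(j_0)=j_0\mu_t\to\infty$ while the overshoot is $O(\mu_t)$ — so that the deterministic anchor, not the noise law, fixes the leading constant. Transferring the conditional estimates and the a.s.\ hypothesis $\mu_t\to0$ through the random $\delta_t$ is then bookkeeping rather than a conceptual obstacle.
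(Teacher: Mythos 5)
Your proposal is correct in substance, and on the \emph{upper bound} it is essentially the paper's own argument: both rest on the inclusion $\{N_t>j\}\subseteq\{|S_{jt}|\le T(j,\alpha_t)\}$ together with a sub-Gaussian tail beyond the drift--boundary balance point (the paper evaluates this at a single $\lambda\asymp \delta_t\log\delta_t$ scaled by an integer $\Lambda$ and sums over $\Lambda$; your split of $\sum_{j\ge 0}\mathbb{P}(\tau^+>j)$ at $(1+\epsilon)j_0$ is the same estimate organized as a Riemann sum, and it correctly isolates that the leading constant comes from the deterministic anchor $j_0$ while the proxy $M$ only affects negligible tail terms — a point the paper leaves more implicit when asserting $C_{\mu_t}\to 1$). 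The genuine divergence is the \emph{lower bound}. The paper controls $\mathbb{P}\bigl(\max_{n\le\lambda}|S_{n}|/\sqrt{n\log(n+1)}\ge\sqrt{-2\log\alpha_t}\bigr)$ by a bespoke refinement of Doob's maximal inequality: it splits $\mathbb{E}(M_n\mathbbm{1}_{E_n})$ according to whether $\mathbb{E}(M_\lambda\mid\mathcal{F}_n)\ge M_n$, applies Cauchy--Schwarz, and bounds $\mathbb{P}\bigl(\mathbb{E}(M_\lambda\mid\mathcal{F}_n)<M_n\bigr)$ via the elementary boundary-increment inequality of Lemma~\ref{lemma:numeric}. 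You instead decompose early stopping into a lower-boundary crossing, killed by the anytime-valid calibration $\mathbb{P}(\exists j:\,W_{jt}<-g(j))\le\alpha_t$ (this is legitimate for $t\ge K$ exactly as in Lemma~\ref{lemma:false sign}, where the $\sqrt{\log j}$ boundary dominates the LIL envelope despite $M\ge\sigma$), and an early upper crossing against the curved gap $h(j)=g(j)-j\mu_t$, handled by peeling. Both mechanisms deliver $\mathbb{P}\bigl(N_t\le(1-\epsilon_t)j_0\bigr)\to 0$ with $\epsilon_t\to 0$; your route buys a conceptually cleaner squeeze to the sharp constant at the cost of importing the time-uniform LIL machinery, whereas the paper's Doob-type argument is self-contained and yields explicit finite-$t$ constants. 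Two points in your sketch need the care you already flagged: the peeling near the anchor must use blocks of width $\asymp\epsilon_t j_0$ (dyadic blocks are too coarse there), since for small $\epsilon$ the effective ratio $h(j)^2/j$ over $[1,(1-\epsilon)j_0]$ is minimized near $(1-\epsilon)j_0$ — small $j$ being protected by the $a_t\log(j+1)$ growth — and your choice $\epsilon_t=(a_t\log\delta_t)^{-1/4}$ does make the block count $2/\epsilon_t$ negligible against $\exp(-c\,\epsilon_t^2 a_t\log\delta_t)$; and your parenthetical that the overshoot is $O(\mu_t)$ is inaccurate (it is $O(1)$ on the noise scale), though immaterial, since neither bound uses overshoot or Wald identities and the correct observation, which you also make, is that $g(j_0)=j_0\mu_t\to\infty$ so the induced perturbation of the crossing time is $O(1/\mu_t)=o(j_0)$.
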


\begin{remark} 
We observe that equation~\eqref{eq:ratio-1} recovers a result from \cite{lai_power-one_1977}, which establishes a similar bound under a more general specification of the boundary function $T(j,\alpha_t)$. However, in our root-finding context, the nonasymptotic characterization provided by equation~\eqref{eq:N_order} is of greater importance, as the trajectory $\{X_t\}_{1 \leq t \leq k}$ is intrinsically coupled with the sample sizes $\{N_t\}_{1 \leq t \leq k}$. The necessity of such a nonasymptotic formulation is further underscored by Theorem~\ref{thm:RSA}.
\end{remark}

The following theorem shows asymptotic normality of the stopping time $N_t$ and the randomly stopped average $M_t$ after proper normalization. The results can be viewed as a generalization of the triangular array version of \citep{anscombe_large-sample_1949}.

%In the literature, \cite{woodroofe_asymptotic_1987} proposes a second-order expansion of the limiting distribution of $N$ and $M_k$ for fixed $\mu$ when $k\rightarrow \infty$.

\begin{theorem}
\label{thm:RSA}
In the preceding setting, 
let $N'_{k}$ be the unique solution to $\sqrt{\frac{\delta}{\log (\delta + 1)}} = \frac{\sigma\sqrt{-2\log \alpha_k}}{|\mu_k|},\delta>1$, the following statements hold:

\begin{align}
\label{eq:RSA1}
   \frac{N_k- N_k'}{2\sqrt{N'_k/\mu_k^2}}\overset{d}{\longrightarrow} \mathcal{N}(0,\sigma^2),
\end{align}
and 
\begin{align}
\label{eq:RSA2}
    \sqrt{N_k}\left(M_k-\mu_k\right)\overset{d}{\longrightarrow}  \mathcal{N}(0,\sigma^2),
\end{align}
as $k\rightarrow \infty$.
\end{theorem}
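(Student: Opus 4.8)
The plan is to reduce both limits to a single Anscombe-type central limit theorem for the noise partial sums evaluated at the random stopping time, and then to treat $N_k$ by inverting the boundary-crossing relation. Throughout I condition on $\mathcal F_{k-1}$, so that $\mu_k$ may be treated as a fixed constant $m$ with $0<|m|\le B$ while the stage-$k$ errors $\{\varepsilon_{ik}\}_i$ are i.i.d., mean-zero, variance-$\sigma^2$, and independent of the past; since the target laws $\mathcal N(0,\sigma^2)$ in \eqref{eq:RSA1}--\eqref{eq:RSA2} do not depend on $m$, unconditioning at the end will be routine provided the conditional convergence is uniform over $|m|\le B$. Writing $W_j:=\sum_{i=1}^j\varepsilon_{ik}$ so that $S_{jk}=jm+W_j$, the single most useful fact is the exact identity $M_k-\mu_k=S_{N_k,k}/N_k-m=W_{N_k}/N_k$. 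Consequently $\sqrt{N_k}\,(M_k-\mu_k)=W_{N_k}/\sqrt{N_k}$, and \eqref{eq:RSA2} is \emph{equivalent} to the random-index CLT $W_{N_k}/\sqrt{N_k}\overset{d}{\to}\mathcal N(0,\sigma^2)$.

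I would first record that $N_k'\to\infty$ \emph{uniformly} over $|m|\le B$: since $N_k'$ solves $N_k'/\log(N_k'+1)=\delta_k=-2\sigma^2 m^{-2}\log\alpha_k$ and $-\log\alpha_k=-\log\alpha+k\log 2\to\infty$, the quantity $\delta_k$ is bounded below by $(-2\sigma^2\log\alpha_k)/B^2\to\infty$. Next I would establish the law of large numbers $N_k/N_k'\overset{P}{\to}1$, which follows from the monotone crossing structure of \eqref{eq:criterion}, comparing $S_{jk}\asymp jm$ against $T(j,\alpha_k)$ as in the heuristic preceding \eqref{eq:boundary}, together with Proposition~\ref{thm:EN}. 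With $N_k/N_k'\to_P1$ in hand, the Anscombe random-index CLT applies: because $\{W_j\}$ has i.i.d.\ increments, Kolmogorov's maximal inequality gives $\max_{|j-N_k'|\le\varepsilon N_k'}|W_j-W_{N_k'}|=o_P(\sqrt{N_k'})$ (the Anscombe uniform-continuity-in-probability condition), so $W_{N_k}/\sqrt{N_k'}$, and hence by Slutsky $W_{N_k}/\sqrt{N_k}$, share the Gaussian limit of $W_{N_k'}/\sqrt{N_k'}$, namely $\mathcal N(0,\sigma^2)$. This proves \eqref{eq:RSA2}.

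For \eqref{eq:RSA1} I would invert the stopping relation. On the high-probability event that the walk exits through the boundary of the sign of $m$ (the complementary ``false-sign'' event has probability at most $\alpha_k\to0$ by the bound displayed before \eqref{eq:boundary}, and is negligible for the limit), the definition of $N_k$ gives $|S_{N_k,k}|=T(N_k,\alpha_k)+R_k$, where the overshoot $R_k\ge0$ is at most the stopped increment $|Y_{N_k,k}|=o_P(\sqrt{N_k'})$ since the sub-Gaussian maximum over $O(N_k')$ steps is $O_P(\sqrt{\log N_k'})$. Taking $m>0$ for definiteness and setting $g(j):=jm-T(j,\alpha_k)$, which vanishes at $N_k'$ by definition of $N_k'$, this reads $g(N_k)=-W_{N_k}+R_k$. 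Differentiating \eqref{eq:boundary} and using $T(N_k')=N_k'm$ and $N_k'/\log(N_k'+1)=\delta_k$ yields $T'(N_k')=\tfrac{\sigma}{2}\sqrt{-2\log\alpha_k}/\sqrt{\delta_k}=\tfrac{|m|}{2}(1+o(1))$ and $T''(N_k')=O(1/N_k')$, so $g'=\tfrac m2(1+o(1))$ is bounded away from $0$ and varies negligibly over any neighborhood of $N_k'$ of width $o(N_k')$. Inverting $g$ by the mean value theorem and bootstrapping — the LLN gives $|N_k-N_k'|=o_P(N_k')$, which then upgrades to $|N_k-N_k'|=\tfrac{2}{|m|}\,|{-W_{N_k}+R_k}|\,(1+o_P(1))=O_P(\sqrt{N_k'})$ — I obtain $N_k-N_k'=-\tfrac{2}{m}W_{N_k}+o_P(\sqrt{N_k'})$. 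Normalizing, $(N_k-N_k')/(2\sqrt{N_k'}/|m|)=-\mathrm{sign}(m)\,W_{N_k}/\sqrt{N_k'}+o_P(1)$, and \eqref{eq:RSA1} follows from the CLT for $W_{N_k}/\sqrt{N_k'}$ and the symmetry of the Gaussian; the case $m<0$ is identical via the lower boundary.

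Finally I would uncondition: the conditional characteristic function $\mathbb{E}[e^{isW_{N_k}/\sqrt{N_k}}\mid\mathcal F_{k-1}]\to e^{-\sigma^2 s^2/2}$ uniformly over $|m|\le B$ (all the estimates above enter only through $N_k'\to\infty$, which holds uniformly), so bounded convergence removes the conditioning. I expect the main obstacle to be making the Anscombe step genuinely uniform in the drift $m$ while remaining valid in the triangular-array regime where $\mu_k$ is random and may drift to $0$; the overshoot control $R_k=o_P(\sqrt{N_k'})$ and the bootstrap that sharpens $|N_k-N_k'|$ from $o_P(N_k')$ to $O_P(\sqrt{N_k'})$ are the other delicate points, though both reduce to standard maximal-inequality and renewal estimates once the uniform LLN $N_k/N_k'\to_P1$ is secured.
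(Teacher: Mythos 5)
Your proposal is correct in outline but takes a genuinely different route from the paper on both halves. For \eqref{eq:RSA2} you exploit the exact identity $\sqrt{N_k}\,(M_k-\mu_k)=W_{N_k}/\sqrt{N_k}$ with $W_j=\sum_{i\le j}\varepsilon_{ik}$ and invoke an Anscombe random-index CLT; this makes \eqref{eq:RSA2} logically independent of \eqref{eq:RSA1} and avoids any overshoot analysis there. The paper proceeds in the opposite order: it proves \eqref{eq:RSA1} first, then replaces $S_{N_k,k}$ by the crossed boundary, writing $\sqrt{N_k}(M_k-\mu_k)=\sqrt{-2\log\alpha_k\log N_k}-\mu_k\sqrt{N_k}+R_k$, expands this expression around $N_k'$ to transfer the normality of $N_k$ to $M_k$, and controls the overshoot $R_k$ via Wald's identity. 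For \eqref{eq:RSA1} you linearize the crossing equation $g(j)=jm-T(j,\alpha_k)$ at its root $N_k'$, computing $T'(N_k')=\tfrac{|m|}{2}(1+o(1))$ so that $g'\approx m/2$, which makes the factor $2$ in the normalization $2\sqrt{N_k'/\mu_k^2}$ completely transparent (a nonlinear-renewal-style argument). The paper instead evaluates the distribution function directly at the deterministic time $n_k=\lfloor N_k'+2z\sqrt{N_k'/\mu_k^2}\rfloor$, splitting $\bbP(N_k\le n_k)$ into a crossing-at-$n_k$ term $T_1$ (handled by a fixed-time Lindeberg CLT conditional on $\mathcal{F}_k$, where the same boundary-slope algebra produces the limit $z$) and a crossed-earlier-but-returned term $T_2$ (killed by a drift comparison using Proposition~\ref{thm:EN}); your route hides that no-return estimate inside the bootstrap $|N_k-N_k'|=O_P(\sqrt{N_k'})$. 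Your unconditioning step via uniform conditional convergence is exactly the paper's Lemma~\ref{lemma:condition}.

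One point needs care: you assert that the law of large numbers $N_k/N_k'\xrightarrow{P}1$ ``follows from \dots Proposition~\ref{thm:EN}.'' It does not, as stated: the expectation bounds there, even with $c_{\mu_t},C_{\mu_t}\to1$, yield via Markov's inequality only tail bounds such as $\bbP(N_k>(1+\varepsilon)N_k')\lesssim 1/(1+\varepsilon)$, which do not vanish; moreover, in the paper this ratio convergence is Corollary~\ref{corollary:ratio_in_p}, derived \emph{from} \eqref{eq:RSA1}, so leaning on it would risk circularity. The real content is the direct drift-versus-boundary comparison you sketch in passing: at $j=(1\pm\varepsilon)N_k'$ the margin $\bigl|\,jm\,\bigr|-T(j,\alpha_k)$ is of order $\varepsilon|m|N_k'$, which dominates the fluctuation scale $\sqrt{N_k'}$ because $m^2N_k'\asymp(-\log\alpha_k)\log N_k'\to\infty$, and a time-uniform sub-Gaussian maximal bound over $j\le(1-\varepsilon)N_k'$ (the same machinery as the paper's Lemma~\ref{lemma:false sign} and the lower-bound argument in Proposition~\ref{thm:EN}) then gives $N_k/N_k'\to_P1$ uniformly in $0<|m|\le B$. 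Once that step is written out, your Anscombe application, overshoot bound $R_k\le|Y_{N_k,k}|+O(|m|)=o_P(\sqrt{N_k'})$, and the bootstrap upgrading $o_P(N_k')$ to $O_P(\sqrt{N_k'})$ are all standard and correct.
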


The weak convergence result stated in~\eqref{eq:RSA2} characterizes the asymptotic distribution of the error incurred by the random stopping rule in the repeated querying process of \textsc{StageSampling}. As \(k \to \infty\), the randomly stopped, normalized sums converge in distribution to the same limit as the normalized sums with a deterministic sample size when \(n \to \infty\). While the random sample size \(N_k\) may affect the distribution for each fixed \(k\), this effect vanishes in the limit as $k\rightarrow \infty$.

\subsection{Differentiable regression function}
\label{subsection:differentiable regression function}
In the development of theoretical guarantees for (adaptive) stochastic approximation procedures and their variants, the focus has predominantly been on the setting in which $f'(\theta)$ exists and is positive. This restriction is natural, as it captures a common scenario. In this section, we show that \textsc{SPRB} attains the parametric rate and optimal asymptotic variance. We first establish consistency of \textsc{SPRB} for any regression function $f$ satisfying Assumption~\ref{assumption:regularity}.
\begin{remark}
    Similar to the argument in \citep{anscombe_large-sample_1949}, the randomly stopped average \(M_t\) can be replaced in our algorithm by the normalized crossing boundary \(N_t^{-1} T(N_t,\alpha_t)\) which doesn't change the asymptotic distribution of \eqref{eq:RSA2} and \textsc{SPRB}. A formal justification is provided in the final part of the proof of Theorem~\ref{thm:RSA}, which can be found in Section~\ref{section:appendix stopping time CLT} of the supplemental material.
\end{remark}
Recall that at each stage $1 \leq t \leq k$, we use $N_t^\sharp$ to denote the number of additional observations drawn from the endpoint opposite to the current update point $X_{\boldsymbol{\cdot},t}$. Correspondingly, $f_t^\sharp$ denotes the sample mean of the responses based on these $N_t^\sharp$ samples at the opposite endpoint $X_{\cdot t}$.

\begin{lemma}
\label{lemma:a.s. correct}
Under Assumption~\ref{assumption:regularity}, with probability one, there exists a finite (random) index $T\in \bbN^+$ such that 
$\mathrm{sign}(\widehat{f}_k) = \mathrm{sign}(f(X_k))$ and $\mathrm{sign}(f^\sharp_k) = \mathrm{sign}(f(X_k))$ always hold for any $k\geq T$.
\end{lemma}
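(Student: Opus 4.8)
The plan is to prove both sign conditions by a conditional Borel--Cantelli argument, showing that across the stages only finitely many sign determinations can be wrong. Following the statement, I read $X_k$ as the design point interrogated by the estimator in question, so that $\widehat f_k$ and $f^\sharp_k$ are each sample-mean-type estimators of $f(X_k)$ and the two assertions say that each estimator recovers $\mathrm{sign}(f(X_k))$ for all large $k$. A preliminary remark is that $f(X_k)\neq 0$ almost surely: each weight-section point \eqref{eq:update} is a continuous function of the noisy estimates $\widehat f_{\ell t},\widehat f_{rt}$ and hence has a continuous conditional law, so it avoids the single point $\theta$ with probability one, while only finitely many bisection steps are ever taken. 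Thus $\mathrm{sign}(f(X_k))$ is well defined and the false-sign bounds below apply.

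For the first statement I use the uniform false-sign bound underlying \textsc{StageSampling}. Conditional on $\mathcal{F}_{k-1}$ the point $X_k$ is fixed with $f(X_k)\neq 0$; bounding the drift-favoured walk $S_j=jf(X_k)+\sum_{i\le j}\varepsilon_i$ by the pure-noise walk and using that the boundary \eqref{eq:boundary} outgrows the law of the iterated logarithm gives, exactly as in Section~\ref{section:algorithm},
\[
\bbP\big(\mathrm{sign}(\widehat f_k)\neq\mathrm{sign}(f(X_k))\mid\mathcal{F}_{k-1}\big)\le \alpha_k=\alpha\,2^{-k},
\]
uniformly over $f(X_k)\neq 0$. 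Taking expectations and summing yields $\sum_k\bbP(\mathrm{sign}(\widehat f_k)\neq\mathrm{sign}(f(X_k)))\le\alpha\sum_k 2^{-k}<\infty$, so by Borel--Cantelli these errors occur only finitely often almost surely.

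For the second statement $f^\sharp_k$ is an ordinary sub-Gaussian average of $N^\sharp_k=\log_2(k+1)\,N_k$ observations at the point it confirms, so conditional on $\mathcal{F}_{k-1}$,
\[
\bbP\big(\mathrm{sign}(f^\sharp_k)\neq\mathrm{sign}(f(X_k))\mid\mathcal{F}_{k-1}\big)\le \exp\!\big(-c\,N^\sharp_k f(X_k)^2/\sigma^2\big)
\]
for a universal $c>0$. The task is to convert this random exponent into a summable deterministic rate. The key lever is the boundary-crossing identity for the estimate $\widehat f_k$ at that point: at its stopping time $N_k\,\widehat f_k^{\,2}=N_k^{-1}S_{N_k}^2> -2\sigma^2\log(N_k+1)\log\alpha_k\gtrsim\sigma^2 k$, and the $\log_2(k+1)$-fold oversampling then gives $N^\sharp_k\,\widehat f_k^{\,2}\gtrsim\sigma^2 k\log_2(k+1)$. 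On the high-probability event where $|\widehat f_k-f(X_k)|\le\tfrac12|\widehat f_k|$ (a vanishing relative fluctuation, since $|\widehat f_k-f(X_k)|\asymp\sigma/\sqrt{N_k}$ while $|\widehat f_k|\gtrsim\sigma\sqrt{k/N_k}$) one may replace $\widehat f_k$ by $f(X_k)$ up to constants, so $N^\sharp_k f(X_k)^2\gtrsim\sigma^2 k\log_2(k+1)$ and the confirmatory false-sign probability is at most $\exp(-c'k\log_2(k+1))$, which is summable; Borel--Cantelli again gives finitely many errors.

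The main obstacle is exactly this magnitude transfer: $N_k$, $N^\sharp_k$ and $f(X_k)$ are random and coupled through the trajectory, so the exponent cannot be controlled pathwise before the size of $\widehat f_k$ is pinned down, which creates an apparent circularity with the very sign-correctness one is proving; a second piece of bookkeeping is that a confirmed endpoint may be retained over many stages, so one must track how its accumulated sample size $N_k$ (and hence $N_k\widehat f_k^2$) grows to preserve the $\gtrsim\sigma^2k$ bound. I will break the circularity by a staged argument: the event $|\widehat f_k-f(X_k)|\le\tfrac12|\widehat f_k|$ fails only finitely often (by the same sub-Gaussian tail, summable because $N_k\widehat f_k^2\gtrsim\sigma^2 k$ via the crossing bound and Proposition~\ref{thm:EN}), and the first-statement conclusion already holds for large $k$, so conditioning on the almost sure event on which both hold from some index onward legitimizes $N_k f(X_k)^2\gtrsim\sigma^2 k$ and the summable bound above. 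Intersecting the three almost sure events and letting $T$ be one more than the largest (random, finite) index at which any of them fails produces a finite $T\in\bbN^+$ with $\mathrm{sign}(\widehat f_k)=\mathrm{sign}(f(X_k))$ and $\mathrm{sign}(f^\sharp_k)=\mathrm{sign}(f(X_k))$ for every $k\ge T$, as required.
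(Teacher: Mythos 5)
Your proposal is correct and takes essentially the same route as the paper: the paper's one-line proof invokes Lemma~\ref{lemma:false sign} (a time-uniform LIL-type concentration bound yielding $\mathbb{P}(\widehat f_k f_k<0)<\alpha_k$ for the fresh estimate, and, for $f^{\sharp}_k$, a conditional sub-Gaussian tail whose exponent is lower-bounded via the stopping-boundary crossing identity at the endpoint's last fresh query together with the accumulated oversampling product $\prod_{j}[\log(j+1)-1]$ over retention stages, split on a noise-control event analogous to your relative-fluctuation event, namely the paper's $\mathcal{E}_{ik}$) and then concludes by Borel--Cantelli, which is precisely the two-part argument you reconstruct, including your flagged ``retention bookkeeping.'' The one blemish is your preliminary justification that $f(X_k)\neq 0$ almost surely because the weight-section point has a continuous conditional law --- this is unfounded for general (possibly discrete) sub-Gaussian noise --- but it is harmless here, since the paper's framework simply takes $\mu_t\neq 0$ almost surely as a standing assumption rather than proving it.
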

%\begin{lemma}
%\label{lemma:large inclusion}
%Let $[X_{\ell ,k-1},X_{r,k}]$ or $[X_{\ell k},X_{r,k-1}]$ be denoted as $\mathcal{I}_{k}^r$ or $\mathcal{I}_{k}^\ell$. Either $\mathcal{I}^r_k$ or $\mathcal{I}^\ell[k]$ contains $\theta$ almost surely for any $k\geq 2$.
%\end{lemma}
%\begin{proof}[Proof of Lemma~\ref{lemma:large inclusion}]
%We prove by induction. It's obvious that when $k=2$, the statement holds. Suppose that when $k=j$, we want to show that either $\mathcal{I}^\ell_{j+1}\owns \theta$ or $\mathcal{I}^r_{j+1}\owns \theta$.
%    
%\end{proof}

Before proving the strong consistency of \textsc{SPRB-Basic}, we introduce the following notation. For each stage $1\le t\le k$, let
\begin{align*}
    \ell[t]& =\inf\left\{\widetilde{t}\leq t:X_{\ell \widetilde{t}}=X_{\ell t}\right\}\quad\text{and}\qquad
     r[t] =\inf\left\{\widetilde{t}\leq t :X_{r \widetilde{t}}=X_{r t}\right\},
\end{align*}
which 
denote, respectively, the most recent past indices at which the left and right endpoints, $\ell_t$ and $r_t$, were updated.

\begin{theorem}[Strong consistency]
\label{thm: consistency}

Let $X_{k+1}$ denote the estimate of the root $\theta$ after $k$ stages in \textsc{SPRB-Basic}. 
Under Assumption~\ref{assumption:regularity}, we have 
\begin{align}
    X_{k+1}\overset{\text{a.s.}}{\longrightarrow}\theta.
\end{align}
\end{theorem}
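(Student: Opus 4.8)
The plan is to reduce the whole statement to the behaviour of the sequence of query points $\{X_{t+1}\}$ after the signs have stabilised, and then to show that every subsequential limit of these points equals $\theta$. First I would invoke Lemma~\ref{lemma:a.s. correct} to fix a finite random index $T$ with $\operatorname{sign}(\widehat f_t)=\operatorname{sign}(f(X_t))$ and $\operatorname{sign}(f^\sharp_t)=\operatorname{sign}(f(X_t))$ for all $t\ge T$. Two structural consequences follow for $t\ge T$. (i) Since by construction $\widehat f_{\ell t}<0<\widehat f_{rt}$ and these signs are now correct, the sign condition in Assumption~\ref{assumption:regularity}(2) forces $X_{\ell t}<\theta<X_{rt}$, i.e.\ $\theta\in\mathcal I_t$; moreover the consistency check $f^\sharp_{\ell t}\widehat f_{\ell t}>0$ always passes, so \textsc{Update} always takes its ``consistent'' branch. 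Hence $X_{\ell t}$ is nondecreasing and $X_{rt}$ nonincreasing, and being bounded they converge, say $X_{\ell t}\uparrow x_\ell^{*}$ and $X_{rt}\downarrow x_r^{*}$ with $x_\ell^{*}\le\theta\le x_r^{*}$. (ii) Each consistent bisection step halves the interval, so only finitely many bisection steps occur after $T$; thereafter the weight-section update is used exclusively.

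Next I would record the two quantitative facts that power the argument. Because $\alpha_t=\alpha2^{-t}\to 0$, the quantity $\delta_t=-2\sigma^2\mu_t^{-2}\log\alpha_t$ of Proposition~\ref{thm:EN} diverges (uniformly, since $|f|\le B$), so $N_t\to\infty$; Theorem~\ref{thm:RSA} then gives $\widehat f_{\cdot t}-f(X_{\cdot t})\to 0$, i.e.\ the endpoint estimates are consistent. At the same time the sign condition bounds $|f|$ away from $0$ on every compact subset of $\mathcal I\setminus\{\theta\}$, while Assumption~\ref{assumption:regularity}(3) gives $|f|\le B$. Writing the weight-section update as the convex combination $X_{t+1}=p_tX_{\ell t}+q_tX_{rt}$ with $p_t=|\widehat f_{rt}|/(|\widehat f_{rt}|+|\widehat f_{\ell t}|)$ and $q_t=1-p_t$, these two bounds will keep the weights away from their degenerate values.

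The crux is to show $X_{t+1}\to\theta$, which I would do by contradiction. Suppose some subsequential limit $x^{*}$ satisfies $x^{*}\ne\theta$; by symmetry take $x^{*}>\theta$, with $X_{t_j+1}\to x^{*}$. For large $j$ these points lie to the right of $\theta$, so $\widehat f_{t_j+1}>0$ and each is a right-update, giving $X_{r,t_j+1}=X_{t_j+1}$; since the right endpoints converge to $x_r^{*}$ this forces $x^{*}=x_r^{*}>\theta$. On any right-update the right endpoint decreases by exactly $X_{rt}-X_{t+1}=p_t(X_{rt}-X_{\ell t})$. For large $t$ along this subsequence $|\widehat f_{rt}|$ is bounded below (since $X_{rt}\ge x_r^{*}>\theta$ and the estimate is consistent) while $|\widehat f_{\ell t}|\le B$, so $p_t\ge c_0>0$; and $X_{rt}-X_{\ell t}\ge x_r^{*}-\theta>0$. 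Thus each of the infinitely many right-updates lowers $X_{rt}$ by at least a fixed positive amount, forcing $X_{rt}\to-\infty$ and contradicting $X_{rt}\ge\theta$. Hence no subsequential limit exceeds $\theta$; the symmetric argument excludes limits below $\theta$, so $X_{t+1}\to\theta$ almost surely, i.e.\ $X_{k+1}\to\theta$.

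The main obstacle is precisely this last step. The weight-section recursion is a false-position-type iteration, for which the interval width need \emph{not} shrink to zero---one endpoint may remain ``stuck'' away from $\theta$---so the naive bound $|X_{k+1}-\theta|\le X_{rk}-X_{\ell k}$ is useless, and one is forced to control each individual update instead. This requires the consistency $\widehat f_{\cdot t}\to f(X_{\cdot t})$ together with the two-sided control $0<\underline c\le|f|\le B$ off $\theta$ to hold uniformly for large $t$ along the \emph{random} trajectory $\{X_t\}$; establishing this uniformity (via $N_t\to\infty$ and Theorem~\ref{thm:RSA}) without invoking any continuity of $f$---so that the discontinuous regime of Assumption~\ref{assumption:jump point} is handled by the same argument---is the delicate part of the proof.
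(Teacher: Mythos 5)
Your argument has a genuine gap at its foundation: the claim in step (i) that for all $t\ge T$ the stored endpoint estimates have correct signs, hence $X_{\ell t}<\theta<X_{rt}$ and the consistency check $f^\sharp_{\ell t}\widehat f_{\ell t}>0$ always passes. Lemma~\ref{lemma:a.s. correct} controls the signs of $\widehat f_k$ and $f^\sharp_k$ \emph{computed at stages} $k\ge T$; the recorded endpoint values $\widehat f_{\ell t},\widehat f_{rt}$ were computed at the earlier stages $\ell[t],r[t]$, which may predate $T$. So at time $T$ the bracket can fail to contain $\theta$ (e.g.\ $X_{\ell T}<X_{rT}<\theta$ with a falsely positive stored $\widehat f_{rT}$), and in that scenario the check does \emph{not} pass --- its failure is exactly what triggers the recovery branch of \textsc{Update} ($[X_{\ell,t+1},X_{r,t+1}]\gets[X_{\ell,t-1},X_{\ell t}]$, Figure~\ref{fig:demo_2}). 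The paper devotes Case~2 of its proof to precisely this situation, showing that since fresh signs are correct after $T$, either some $X_{k+1}$ crosses $\theta$ and one is reduced to the bracketing case, or a contradiction arises. Your proof cannot absorb this case: once a recovery step occurs, your structural claims that $X_{\ell t}$ is nondecreasing, $X_{rt}$ is nonincreasing, and that only finitely many bisection steps occur (the interval can \emph{grow} at a recovery step) all break down, and the contradiction argument built on them no longer applies.

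Apart from this, your main mechanism is essentially the paper's Case~1 in contrapositive form: the paper takes the monotone limit in the identity $X_{\ell,k+1}-\theta=X_{\ell k}-\theta+\omega_k(X_{rk}-X_{\ell k})$ to force $\omega_k\to0$ and hence $\widehat f_{\ell k}\to0$, while you bound each update's decrement $p_t(X_{rt}-X_{\ell t})$ from below to contradict convergence of $X_{rt}$; both hinge on the weights staying nondegenerate via consistency of the stopped averages and the two-sided control of $|f|$ off $\theta$. One further technical slip: you invoke Theorem~\ref{thm:RSA} for $\widehat f_{\cdot t}-f(X_{\cdot t})\to0$, but that theorem is a CLT and yields only in-probability statements; for the almost-sure consistency your pathwise subsequence argument needs, the right tools are Lemma~\ref{lemma:almost surely}(ii) together with Lemma~\ref{lemma:gut}. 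To repair the proof, add the paper's Case~2 analysis (or an equivalent argument showing the recovery branch restores bracketing after finitely many steps almost surely) before running your contradiction argument.
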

\begin{proof}
By Lemma~\ref{lemma:a.s. correct}, it suffices to consider the case when $k\geq T+1$ and we discuss the following two cases. 

\textbf{Case 1:} If $X_{\ell T}<\theta<X_{rT}$, as every $k = \lceil \log j\rceil$ for some $j$, if both $\ell[k]$ and $r[k]$ goes to infinity almost surely, the interval $[X_{\ell k},X_{r k}]$ contains the root $\theta$ with vanishing interval width. Therefore, strong consistency is guaranteed. 

If only one endpoint keep updated, say $X_{k+1} = X_{\ell k}$, then we obtain that
  \begin{align}
  \label{eq:consistency}
   0>X_{\ell ,k+1} - \theta 
    & = \omega_k(X_{rk}-\theta) + (1-\omega_k) (X_{\ell k}-\theta) \\
    \notag
    & = X_{\ell k}-\theta + \omega_k(X_{rk}-X_{\ell k})\geq X_{\ell k}-\theta,
\end{align}
where the weight 
 $\omega_k=\frac{\widehat{f}_{\ell k}}{\widehat{f}_{\ell k}-\widehat{f}_{rk}}$. By the monotonicity and boundedness of $\{X_{\ell k}\}_{k\geq T+1}$, we take limits of both side of \eqref{eq:consistency} and obtain that $\limsup_{k\rightarrow\infty} \omega_k =0$ almost surely.

In either the cases, this implies that $\lim\limits_{k\rightarrow \infty} X_{k+1} = \lim\limits_{k\rightarrow \infty} X_{\ell k} = \theta$.

\textbf{Case 2:} In this case, we consider that the interval $[X_{\ell T},X_{rT}]$ doesn't contain the root $\theta$. Without loss of generality, we consider the case when $X_{\ell T}<X_{rT}<\theta$. Note that the ratio $\omega_k$ always lies in the interval $(0,1)$. If for some $k\geq T+1$, $X_{k+1}>\theta$, it reduces to case 1, as $k\geq T+1$, $X_{k+1}>\theta$ if and only if $X_{r,k+1}\leftarrow X_{k+1}$ when $k\geq T+1$.
Therefore, we obtain strong consistency of $X_{k+1}$.
\end{proof}
Now, we turn to study the asymptotics of $X_{k+1}$ under the assumption that the first order derivative $f'(\theta)$ exists and is positive. 
The assumption is standard in the literature and excludes pathological examples like $f(x) = x(1 - \nicefrac{1}{\log(x)})$ if $\theta = 0 $.
\begin{assumption}
\label{assumption:differentiable}
There exists $\nu>1$ such that
\begin{equation}
    \label{eq:taylor}
    f(x) = \beta(x-\theta) + o(|X-\theta|^\nu),
\end{equation}
as $x\rightarrow \theta$ with $\beta = f'(\theta)>0$.
\end{assumption}
\begin{theorem}
\label{thm:CLT_smooth}
Let $X_{k+1}$ denote the estimate of the root $\theta$ after $k$ stages. Under Assumptions \ref{assumption:regularity} and \ref{assumption:differentiable},

\begin{align}
\label{eq:CLT_smooth_2}
    \sqrt{n}\left(X_{k+1} - \theta\right)\overset{d}{\longrightarrow} \mathcal{N}\bigl(0,\sigma^2/\beta^2\bigr),
\end{align}
where $n=\sum\limits_{t=1}^{k-1} N_t^\sharp + N_k$ denotes the total number of samples used in the first $k$ stages.
\end{theorem}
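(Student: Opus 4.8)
The plan is to analyze the single weight-section step \eqref{eq:update} that produces the output $X_{k+1}$ and to show that, to leading order, $X_{k+1}-\theta$ is a rescaling of the estimation error at the most recently introduced endpoint, to which the generalized Anscombe CLT of Theorem~\ref{thm:RSA} applies.

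\emph{Reduction and error decomposition.} By Lemma~\ref{lemma:a.s. correct} and Theorem~\ref{thm: consistency}, almost surely there is a finite random stage beyond which all signs are read correctly, $\theta\in\mathcal I_t$, the interval width lies below $\delta$ (so \eqref{eq:update} is active), and both endpoints converge to $\theta$. Working on this event, write $\widehat f_{\ell k}=f(X_{\ell k})+e_{\ell k}$ and $\widehat f_{rk}=f(X_{rk})+e_{rk}$, where $e_{\cdot k}$ is a randomly stopped average minus its mean. Substituting into \eqref{eq:update},
\[
X_{k+1}-\theta=\frac{\widehat f_{\ell k}(X_{rk}-\theta)-\widehat f_{rk}(X_{\ell k}-\theta)}{\widehat f_{\ell k}-\widehat f_{rk}} .
\]
Expanding $f$ at both endpoints via Assumption~\ref{assumption:differentiable} (valid since both endpoints tend to $\theta$) cancels the $\beta$-linear part of the numerator exactly — this is the noiseless identity $X_{t+1}=\theta$ specialized to the tangent line. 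Writing $d_k=|X_k-\theta|$ for the distance of the freshly introduced endpoint and $d_{o}$ for the retained one, and using $\widehat f_{\ell k}-\widehat f_{rk}\approx\beta(X_{\ell k}-X_{rk})$, I would recast the result as a convex combination
\[
X_{k+1}-\theta=-\frac1\beta\Big(\tfrac{d_{o}}{d_k+d_o}\,e_k+\tfrac{d_k}{d_k+d_o}\,e_{o}\Big)+\mathrm{bias}+\mathrm{cross},
\]
where $e_k$ is the error at the fresh endpoint $X_k$ (sample count $N_k$) and $e_{o}$ the error at the retained/refined endpoint.

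\emph{Fresh endpoint dominance.} Since $d_k\ll d_o$, the fresh error carries weight $d_o/(d_k+d_o)\to1$, leaving the leading term $-e_k/\beta$. Three remainders must be shown to be $o_P(N_k^{-1/2})$: (i) the retained-endpoint contribution $\tfrac{d_k}{d_o}e_o$, for which refinement is essential — the extra $N_t^\sharp=\tau_tN_{\cdot t}$ samples with $\tau_t=\log_2(t+1)-1\to\infty$ shrink $e_o$ so that the ratio of the two contributions is $\asymp 1/\sqrt{\tau_t}\to0$; (ii) the deterministic bias, of order $d_k\,o(d_o^{\nu-1})$, which is $o(N_k^{-1/2})$ because $\nu>1$ and the endpoints decay superexponentially; (iii) products of errors, which are higher order. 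The superexponential decay itself I would establish by feeding $N_t\asymp\sigma^2 t/(\beta^2 d_t^{2})$ (up to logarithmic factors) from Proposition~\ref{thm:EN} into the one-step error, yielding $d_{t+1}\lesssim d_t/\sqrt t$ up to logs; proving this rate with high probability along the random trajectory is the backbone of the step.

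\emph{CLT and budget accounting.} This reduces the claim to $\sqrt{N_k}(X_{k+1}-\theta)=-\sqrt{N_k}\,e_k/\beta\,(1+o_P(1))+o_P(1)$. Conditionally on $\mathcal F_{k-1}$, the location $X_k$ and hence $\mu=f(X_k)$ are fixed, so $e_k$ is exactly a randomly stopped average and \eqref{eq:RSA2} gives $\sqrt{N_k}\,e_k\overset{d}{\to}\mathcal N(0,\sigma^2)$, whence $\sqrt{N_k}(X_{k+1}-\theta)\overset{d}{\to}\mathcal N(0,\sigma^2/\beta^2)$. To pass from $N_k$ to the total budget $n=\sum_{t=1}^{k-1}N_t^\sharp+N_k$, I would use Proposition~\ref{thm:EN} together with the superexponential growth of $\{N_t\}$: the final fresh count dominates all earlier fresh counts and all refinement counts, so $n/N_k\to1$, and Slutsky delivers \eqref{eq:CLT_smooth_2}.

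\emph{Main obstacle.} The crux is that the location $X_k$, the stopping time $N_k$, the sign at each stage, and the identity of the refined endpoint are all random and coupled through the history $\bm H_t$, whereas Theorem~\ref{thm:RSA} is a marginal statement for one stage. Transferring it to the endpoint actually selected by the adaptive trajectory, while simultaneously showing the retained-endpoint term, the bias, and the cross terms are \emph{uniformly} negligible and $n\sim N_k$, is the delicate part; this is precisely why the nonasymptotic bound \eqref{eq:N_order}, rather than the limit \eqref{eq:ratio-1}, is needed — it controls $N_t$ pathwise. I expect the cleanest route is to condition on $\mathcal F_{k-1}$, apply the conditional CLT with the fixed conditional mean $\mu=f(X_k)$, and integrate out after verifying that the conditional variance proxy concentrates.
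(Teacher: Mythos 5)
Your proposal is correct and follows essentially the same route as the paper's proof: your convex-combination decomposition with fresh-endpoint dominance is exactly the paper's expansion into $-\bar{\varepsilon}_k/\beta$ plus the terms $C_k^{(\ell)},C_k^{(r)}$ (Proposition~\ref{prop:asymptotic expansion} and Proposition~\ref{prop:small term}, where the refinement factor $\tau_t$ and a conditional sub-Gaussian tail bound on $N_k$ given $\mathcal{F}_{k-1}$ make the retained-endpoint and cross terms $o_P(N_k^{-1/2})$), your conditioning-on-$\mathcal{F}_{k-1}$ device for applying \eqref{eq:RSA2} is the paper's Lemma~\ref{lemma:condition} combined with Theorem~\ref{thm:RSA}, and your budget accounting $n/N_k\overset{P}{\to}1$ via superexponential growth of $N_t$ is Lemma~\ref{lemma:number smooth}. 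The points you flag as delicate (pathwise control of $N_t$ via the nonasymptotic bound \eqref{eq:N_order}, and uniform negligibility of the remainders along the adaptive trajectory) are precisely where the paper expends its technical effort, so the plan matches the actual proof in both structure and substance.
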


\begin{remark}
%By \citep[equation (1.10)]{anscombe_sequential_1953} and \cite{blackwell_lower_1947}, for a sequential estimator expressed as 
%$
%\widehat{\theta}_N = \theta + b(\theta)$,
%one has
%\[
%\operatorname{Var}\left(\widehat{\theta}_N\right) \geq \frac{\left(1 + \frac{\mathrm{d}b(\theta)}{\mathrm{d}\theta}\right)^2}{\mathbb{E}[N]\,\mathbb{E}\!\left[\left(\frac{\partial \mathcal{L}}{\partial \theta}\right)^2\right]},
%\]
%where \(\mathcal{L}\) denotes the log-likelihood function for a single observation. In our algorithm, consistency implies that the numerator equals \(1\) and, since
%\[
%\frac{\partial}{\partial\theta}\mathcal{L} = \frac{\beta}{\sigma^2}\varepsilon_k,
%\]
%the optimal asymptotic variance is given by \(\sigma^2/\beta^2\). Hence, by Theorem  and under a first-order differentiability condition, \text{SPRT} algorithm is asymptotically efficient.

In contrast to the asymptotic behavior of the Robbins-Monro procedure~\eqref{eq:RM-SA} (see \eqref{eq:asymptotics_SA}, \eqref{eq:asymptotics_SA_variance} and \cite{major_limit_1973}), \textsc{SPRB} circumvents the slowdown phenomenon exhibited by the Robbins--Monro procedure when the derivative of the regression function at the root satisfies
$
0 < f'(\theta) \leq \frac{1}{2\alpha}.
$
Moreover, its asymptotic variance, $\sigma^2/\beta^2$, attains the information lower bound, thereby matching the adaptivity level of adaptive stochastic approximation methods under Assumption~\ref{assumption:differentiable} \cite{lai_adaptive_1979,lai_consistency_1981}.
\end{remark}

We provide a proof sketch of Theorem~\ref{thm:CLT_smooth} next.
\begin{proof}[Proof Sketch of Theorem~\ref{thm:CLT_smooth}] 
We define 
\begin{align}
\label{eq:C_k^l}
     C_k^{(\ell)} = \frac{X_{\ell[k]}-\theta}{X_{\ell[k]}-X_{r[k]}}\Big(\bar{\varepsilon}_{\ell[k]}-\bar{\varepsilon}^\sharp_{r[k]}\Big)\quad\text{and}\quad
     C_k^{(r)} = \frac{X_{r[k]}-\theta}{X_{\ell[k]}-X_{r[k]}}\Big(\bar{\varepsilon}_{\ell[k]}^\sharp-\bar{\varepsilon}_{r[k]}\Big)
\end{align}
to facilitate the exposition. 
We first derive the asymptotic expansion of the \emph{weight-section} update:
\begin{align}
\label{eq:sketch1}
X_{k+1}-\theta
= & \frac{\widehat{f}_{\ell k}\, X_{rk} - \widehat{f}_{rk}\, X_{\ell k}}{\widehat{f}_{\ell k} - \widehat{f}_{rk}}-\theta\\
\label{eq:sketch2}
= & \left(\frac{X_{r[k]}-\theta}{\beta(X_{\ell[k]}-X_{r[k]})}\Bar{\varepsilon}_{\ell[k]} -\frac{X_{\ell[k]}-\theta}{\beta(X_{\ell[k]}-X_{r[k]})}\Bar{\varepsilon}_{r[k]}^\sharp\right)\mathbbm 1\big\{\ell[k]=k\big\}(1+o_P(1))\\
\notag
+&\left(\frac{X_{r[k]}-\theta}{\beta(X_{\ell [k]}-X_{r[k]})}\Bar{\varepsilon}_{\ell[k]}^\sharp -\frac{X_{\ell [k]}-\theta}{\beta(X_{\ell [k]}-X_{r[k]})}\Bar{\varepsilon}_{r[k]}\right)\mathbbm 1\big\{r[k]=k\big\}(1+o_P(1))
\\
\label{eq:sketch3}
= & 
\frac{1}{\beta}\Big(-\bar{\varepsilon}_k + C_k^{(\ell)}\mathbbm{1}\big\{\ell[k] =k\big\} +  C_k^{(r)}\mathbbm{1}\big\{r[k]=k\big\}\Big)(1 + o_P(1)).
\end{align}
The derivation from~\eqref{eq:sketch1} to~\eqref{eq:sketch2} is obtained through a first‐order Taylor expansion of the regression function \(f\) around its root \(\theta\), justified by Assumption~\ref{assumption:differentiable}. It's straightforward to derive~\eqref{eq:sketch3} from~\eqref{eq:sketch2} by substracting and adding. The details are in Section~\ref{section:appendix smooth} of the supplemental material.  

The next proposition show that, if the right endpoint is updated (i.e.,
$r[k]=k$), then
$C_k^{(r)}=o_P(N_k^{-1/2})$. By symmetry, the same argument holds when the left endpoint is updated at stage $k$: $C_k^{(\ell)}=o_P(N_k^{-1/2})$ if $\ell[k]=k$.  
\begin{proposition}
\label{prop:small term}
Let $C_{k}^{(\ell)}$ be defined as~\eqref{eq:C_k^l}. For any positive number $\delta>0$,
\begin{align}
\lim\limits_{k\rightarrow\infty}\bbP\Big(
N_k^{1/2}\big|C_k^{(r)}\big|\mathbbm 1\big\{r[k]=k\big\}
\vee
N_k^{1/2}\big|C_k^{(\ell)}\big|\mathbbm 1\big\{\ell[k]=k\big\} 
>\delta\Big) =0.
\end{align}
\end{proposition}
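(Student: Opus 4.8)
The plan is to treat the case \(r[k]=k\) (the case \(\ell[k]=k\) is identical after interchanging the roles of \(\ell\) and \(r\), and the two events are complementary, so the \(\vee\) in the statement is handled by a union bound). Throughout I would work on the almost-sure event supplied by Lemma~\ref{lemma:a.s. correct} on which all signs are eventually correct and \(\theta\in[X_{\ell[k]},X_{r[k]}]\). The first move is to factor the quantity of interest, using the definition \eqref{eq:C_k^l}, as
\[
C_k^{(r)}=\rho_k\bigl(\bar\varepsilon^\sharp_{\ell[k]}-\bar\varepsilon_{r[k]}\bigr),
\qquad
\rho_k:=\frac{X_{r[k]}-\theta}{X_{\ell[k]}-X_{r[k]}}\in(-1,0),
\]
so that \(|\rho_k|\le 1\) automatically (the root lies in the interval). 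Since the leading term \(-\bar\varepsilon_k/\beta\) of \eqref{eq:sketch3} equals \(-\bar\varepsilon_{r[k]}/\beta\) when \(r[k]=k\), carried by exactly \(N_k=N_{r[k]}\) samples, the proposition is equivalent to showing \(|\rho_k|\,N_k^{1/2}\bigl|\bar\varepsilon^\sharp_{\ell[k]}-\bar\varepsilon_{r[k]}\bigr|\xrightarrow{P}0\).

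The second step disposes of the two noise pieces. By Theorem~\ref{thm:RSA} in the form \eqref{eq:RSA2}, the randomly stopped average satisfies \(N_{r[k]}^{1/2}\bar\varepsilon_{r[k]}\overset{d}{\to}\mathcal N(0,\sigma^2)\), so \(N_k^{1/2}|\bar\varepsilon_{r[k]}|=O_P(\sigma)\); this is precisely where the decoupling CLT is needed, since the stopped sum and its random length \(N_{r[k]}\) are built from the same noise. For the opposite endpoint I would write \(N_k^{1/2}|\bar\varepsilon^\sharp_{\ell[k]}|=O_P\bigl(\sigma(N_{r[k]}/N^\sharp_{\ell[k]})^{1/2}\bigr)\) and control \(\rho_k(N_{r[k]}/N^\sharp_{\ell[k]})^{1/2}\) using the \(\sharp\)-augmentation: the left endpoint has been augmented on every stage on which the right endpoint moved, and since \(\tau_t\to\infty\) these augmentations accumulate multiplicatively, forcing \(N^\sharp_{\ell[k]}\) to be large enough (with a large gap when \(\ell[k]\) is stale) that this contribution is no larger than the fresh-endpoint one. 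Both noise factors being thereby reduced to \(\rho_k\cdot O_P(\sigma)\), it remains only to prove \(\rho_k\xrightarrow{P}0\).

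The heart of the argument is this last claim. Writing \(X_{r[k]}=X_k\) and expanding the stage-\((k-1)\) weight-section update, the linear terms cancel exactly (as in the noiseless recovery of \(\theta\) after \eqref{eq:update}), and a first-order Taylor expansion under Assumption~\ref{assumption:differentiable} leaves
\[
X_k-\theta
=\frac{(X_{r,k-1}-\theta)\,\bar\varepsilon_{\ell,k-1}-(X_{\ell,k-1}-\theta)\,\bar\varepsilon_{r,k-1}}
{\beta\,(X_{\ell,k-1}-X_{r,k-1})}\bigl(1+o_P(1)\bigr).
\]
Because each endpoint estimate carries noise of \emph{smaller order than its own signal}, i.e. \(|\bar\varepsilon_{\cdot,k-1}|=o_P(\beta|X_{\cdot,k-1}-\theta|)\) — a quantified form of Lemma~\ref{lemma:a.s. correct} extracted from Proposition~\ref{thm:EN}, where the signal-to-noise ratio is governed by \((-\log\alpha_{\cdot})^{1/2}\to\infty\) — both products in the numerator are \(o_P\bigl(\beta|X_{\ell,k-1}-\theta|\,|X_{r,k-1}-\theta|\bigr)\), whence
\[
|X_k-\theta|
=o_P\!\left(\frac{|X_{\ell,k-1}-\theta|\,|X_{r,k-1}-\theta|}{X_{r,k-1}-X_{\ell,k-1}}\right)
=o_P\bigl(\min\{|X_{\ell,k-1}-\theta|,|X_{r,k-1}-\theta|\}\bigr).
\]
Since the left endpoint is unchanged at stage \(k\), \(X_{\ell[k]}=X_{\ell,k-1}\) and \(X_{r[k]}-X_{\ell[k]}=X_k-X_{\ell,k-1}\ge\theta-X_{\ell,k-1}=|X_{\ell,k-1}-\theta|\); therefore \(|\rho_k|=|X_k-\theta|/(X_{r[k]}-X_{\ell[k]})=o_P(1)\), which combined with the previous step yields the claim.

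The principal difficulty I anticipate is making the displayed \(o_P\) bounds simultaneous and uniform in the presence of the coupling flagged in the remark after Proposition~\ref{thm:EN}: the frozen positions \(X_{\cdot,k-1}\), the stopped noises \(\bar\varepsilon_{\cdot,k-1}\), and the stopping times \(N_{\cdot,k-1}\) are all functionals of one sample path, so the ``noise \(=o_P(\text{signal})\)'' estimate cannot be read off by naive conditioning. The remedy is to condition on the filtration \(\mathcal F\) at the random stage each endpoint was last frozen, and then to invoke the nonasymptotic two-sided control of \(\mathbb{E}N_t\) in \eqref{eq:N_order} together with the CLT of Theorem~\ref{thm:RSA}, so that the per-endpoint signal-to-noise ratio is bounded below by a deterministic sequence tending to infinity, uniformly over the (random) freezing times.
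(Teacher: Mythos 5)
Your overall architecture (split $C_k^{(r)}$ into a fresh-noise piece controlled by Theorem~\ref{thm:RSA} and a stale-noise piece controlled by the $\sharp$-augmentation, then kill the geometric ratio $\rho_k$) is in the same spirit as the paper's three-step proof, and your derivation of $\rho_k=o_P(1)$ via the bound $\tfrac{ab}{a+b}\le\min(a,b)$ applied to the weight-section numerator is a genuinely clean alternative to the paper's Steps 2--3 (which instead bootstrap tightness $\sqrt{N_{k-1}}(X_k-\theta)=\mathcal{O}_P(1)$ and then compute the contraction rate explicitly). However, your second step contains a genuine gap. You assert that the multiplicative augmentations force $N^\sharp_{\ell[k]}$ to be large enough that $N_k^{1/2}\bigl|\bar{\varepsilon}^\sharp_{\ell[k]}\bigr|$ is no larger than the fresh contribution, i.e.\ $\mathcal{O}_P(\sigma)$. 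This is false: the paper itself notes (in the discussion of the \textsc{Update} step) that with high probability $N_t\ll N_t^\sharp=N_t(1+\tau_t)\ll N_{t+1}$ under Assumption~\ref{assumption:differentiable}, because $\tau_t\asymp\log t$ contributes only a logarithmic factor while $N_{t+1}/N_t$ grows polynomially in $t$. Concretely, in the typical case $\ell[k]=k-1$ one has $N_k/N^\sharp_{\ell[k]}\asymp k$ in probability, so $N_k^{1/2}\bigl|\bar{\varepsilon}^\sharp_{\ell[k]}\bigr|$ diverges like $\sqrt{k}$. Your reduction of both noise factors to $\rho_k\cdot\mathcal{O}_P(\sigma)$ therefore fails for the stale factor, and the qualitative conclusion $\rho_k=o_P(1)$ that your third step delivers is not strong enough to absorb a diverging $\sqrt{k}$ factor; you would need a rate such as $\rho_k=\mathcal{O}_P\bigl((k\log k)^{-1/2}\bigr)$, which your $o_P(\min)$ argument does not supply.

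The paper avoids this by a different mechanism in its Step 1: it conditions on $\mathcal{F}_{k-1}$ and uses the sub-Gaussian right tail of the stopping time $N_k$, whose decay rate is governed by $f(X_k)^2\asymp\beta^2(X_k-\theta)^2$. In the event $\bigl\{N_k>\delta^2\bigl(\tfrac{X_{\ell[k]}-\theta}{\bar{\varepsilon}^\sharp_{\ell[k]}(X_k-\theta)}\bigr)^2\bigr\}$ the unknown factor $(X_k-\theta)^2$ in the threshold cancels against the same factor in the tail exponent, leaving $\exp\bigl(-c\delta^2(X_{\ell[k]}-\theta)^2/(\sigma^2(\bar{\varepsilon}^\sharp_{\ell[k]})^2)\bigr)$; the events $\mathbb{I}_k^{(\ell)}$ and $\mathbb{J}_k^{(\ell)}$ then show the stale endpoint's signal-to-noise ratio exceeds $\log k$ with probability tending to one, precisely because of the single $\log k$ augmentation factor. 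Your closing remark about conditioning at the freezing times together with the nonasymptotic bound \eqref{eq:N_order} points in this direction, but as written the proposal neither performs this cancellation nor quantifies $\rho_k$, so the treatment of the stale-noise term does not go through.
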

%We delay the proof of Proposition~\ref{prop:small term} to Section~\ref{section:appendix smooth}.
By the preceding Proposition, we obtain that
\begin{align}
\notag
    X_{k+1} - \theta 
    = & 
\frac{1}{\beta}\Big(-\bar{\varepsilon}_k + C_k^{(\ell)}\mathbbm{1}\big\{\ell[k] =k\big\} +  C_k^{(r)}\mathbbm{1}\big\{r[k]=k\big\}\Big)(1 + o_P(1)).\\
\label{eq:sketch4}
    = &-
\frac{1}{\beta}\sum_{i=1}^{N_k}\varepsilon_{ik}
+
o_P\bigl(N_k^{-1/2}\bigr).
\end{align}
Combining the expansion~\eqref{eq:sketch3} with Theorem~\ref{thm:RSA} yields
\begin{align}
\label{eq:prelim_expansion}
N_k^{1/2}(X_{k+1}-\theta)
\;\overset{d}{\longrightarrow}\;
\mathcal{N}\!\bigl(0,\sigma^2/\beta^2\bigr).
\end{align}

It remains to show that \(n/N_k \xrightarrow{P} 1\); the formal proof
is given in Lemma~\ref{lemma:number smooth} of the supplement, and we
provide only a heuristic explanation here to avoid
repeating technical details.  
From \eqref{eq:prelim_expansion} we have 
\[
  N_{k-1} = \mathcal{O}_{P}\bigl((X_k-\theta)^{-2}\bigr).
\]
Proposition~\ref{thm:EN} further implies
\[
 \frac{N_k (X_k - \theta)^2}{k\log k}\rightarrow \infty
\]
in probability.
Hence \(N_k\) grows faster than any fixed exponential rate in \(k\).
Because \(\tau_k \asymp \log k\), the refinement factor satisfies
\(N_k^\sharp/N_k = 1+\tau_k\), i.e.\ it introduces at most a
polynomial factor in \(\log k\).  Therefore, we have
\[ \frac{n}{N_k} = \frac{\sum_{t=1}^{k-1}N_t}{N_k}\overset{P}{\longrightarrow} 1,
\]
which completes the proof sketch.

\end{proof}

\begin{proof}[Proof of Proposition~\ref{prop:small term}]

Since $\lvert X_{\ell[k]}-X_{r[k]}\rvert \ge \lvert X_{\ell[k]}-\theta\rvert$ by Lemma~\ref{lemma:a.s. correct}, it suffices to show that, when $r[k]=k$ and $\ell[k]\le k-1$,
\begin{equation}\label{eq:Ck-left}
T_{k1}+T_{k2}\xrightarrow{P}0,
\end{equation}
where
\[
T_{k1}:=\sqrt{N_k}\left|\frac{X_k-\theta}{X_{\ell[k]}-\theta}\,\bar{\varepsilon}_{\ell[k]}^{\sharp}\right|,
\qquad
T_{k2}:=\sqrt{N_k}\,|\bar{\varepsilon}_k|\left|\frac{X_k-\theta}{X_{\ell[k]}-X_k}\right|.
\]

%by the continuity of function $s\mapsto \frac{s}{s-1}$ near $0$.
By Theorem~\ref{thm:RSA}, the second term satisfies \(T_{k2}=\mathcal{O}_P(1)\) since \(\bigl|\tfrac{X_k-\theta}{X_{\ell[k]}-X_k}\bigr|\le 1\).
We organize the proof in three steps.  
Step 1 establishes \(T_{k1}=o_P(1)\).  
Step 2 uses this bound to show that \(T_{k1}+T_{k2}\) is tight.  
Step 3 strengthens the conclusion to \(T_{k1}+T_{k2}=o_P(1)\).

\textbf{Step 1:}
We first prove that \(T_{k1}=o_P(1)\) as \(k\to\infty\).
Conditioning on \(\mathcal{F}_{k-1}\) and using that \(X_k\in\mathcal{F}_{k-1}\) for all \(k\), there exists an absolute constant \(c>0\) such that
\begin{align*}
\mathbb{P}\!\Bigl(\sqrt{N_k}\Bigl|\tfrac{X_k-\theta}{X_{\ell[k]}-\theta}\,\bar{\varepsilon}_{\ell[k]}^{\sharp}\Bigr|>\delta\Bigr)
&=\mathbb{E}\!\Bigl[
      \mathbb{P}\!\Bigl(
      N_k>\delta^{2}
      \Bigl(\tfrac{X_{\ell[k]}-\theta}{\bar{\varepsilon}_{\ell[k]}^{\sharp}(X_k-\theta)}\Bigr)^{2}
      \,\Bigm|\,\mathcal{F}_{k-1}
      \Bigr)
   \Bigr]\\
&\le
\mathbb{E}\!\Bigl[
      \exp\!\Bigl(
      -\tfrac{c\delta^{2}}{\sigma^{2}(\bar{\varepsilon}_{\ell[k]}^{\sharp})^{2}}
      (X_{\ell[k]}-\theta)^{2}
      \Bigr)
   \Bigr].
\end{align*}

Introduce the events, for a constant \(C>0\),
\[
\mathbb{I}_{k}^{(\ell)}=\Bigl\{(\bar{\varepsilon}_{\ell[k]}^{\sharp})^{2}\le \tfrac{C}{N_{\ell[k]}\log k}\Bigr\},
\qquad
\mathbb{J}_{k}^{(\ell)}=\bigl\{N_{\ell[k]}(X_{\ell[k]}-\theta)^{2}\ge C\bigr\}.
\]
On \(\mathbb{I}_{k}^{(\ell)}\cap\mathbb{J}_{k}^{(\ell)}\),
\[
\exp\!\Bigl(
-\tfrac{c\delta^{2}}{\sigma^{2}(\bar{\varepsilon}_{\ell[k]}^{\sharp})^{2}}
(X_{\ell[k]}-\theta)^{2}
\Bigr)
\le 
\exp\!\Bigl(-\tfrac{c\delta^{2}\log k}{\sigma^{2}}\Bigr).
\]
Hence it remains to show that
\(\mathbb{P}(\mathbb{I}_{k}^{(\ell)})\to1\) and \(\mathbb{P}(\mathbb{J}_{k}^{(\ell)})\to1\).

Event \(\mathbb{I}_{k}^{(\ell)}\):  
Since
\[
N_{\ell[k]}(\bar{\varepsilon}_{\ell[k]}^{\sharp})^{2}
=
\frac{\bigl(\sum_{i=1}^{N_{\ell[k]}}\varepsilon_{i\ell[k]}
          +\sum_{i'=1}^{(\log k-1)N_{\ell[k]}}\varepsilon_{i'}\bigr)^{2}}
     {N_{\ell[k]}\log^{2}k},
\]
the union bound yields
\begin{align*}
\mathbb{P}\!\Bigl(N_{\ell[k]}(\bar{\varepsilon}_{\ell[k]}^{\sharp})^{2}>\tfrac{C}{\log k}\Bigr)
&\le
\mathbb{P}\!\Bigl(
      \tfrac{(\sum_{i=1}^{N_{\ell[k]}}\varepsilon_{i\ell[k]})^{2}}{N_{\ell[k]}}
      >\tfrac{C}{4}\log k
      \Bigr)
+
\mathbb{P}\!\Bigl(
      \tfrac{(\sum_{i'=1}^{(\log k-1)N_{\ell[k]}}\varepsilon_{i'})^{2}}
            {N_{\ell[k]}\log k}
      >\tfrac{C}{4}
      \Bigr),
\end{align*}
and each probability vanishes for sufficiently large \(C\) by Theorem~\ref{thm:RSA} and the standard central limit theorem, respectively.

Events $(\mathbb{J}_{k}^{(\ell)}$:  
For events $\mathbb J_k^{(\ell)}$, again, by Theorem~\ref{thm:RSA},
\begin{align*}
    \mathbb P\Big(N_{\ell[k]}(X_{\ell[k]}-\theta)^2<C\Big)
    = 
    \mathbb P
    \Big(\frac{N_{\ell[k]}-N_{\ell[k]}'}{2\sqrt{\frac{N_k'}{f(X_{\ell[k]})^2}}}<\frac{1}{{2\sqrt{\frac{N_k'}{f(X_{\ell[k]})^2}}}}\Big(\frac{C}{(X_{\ell[k]}-\theta)^2} - N_{\ell[k]}'\Big)\Big),
\end{align*}
with 
\begin{align*}
    \frac{1}{{2\sqrt{\frac{N_k'}{f(X_{\ell[k]})^2}}}}\Big(\frac{C}{(X_{\ell[k]}-\theta)^2} - N_{\ell[k]}'\Big)
    & \lesssim -  \frac{f(X_{\ell[k]})^2}{\sqrt{k\log k}}
    \Big(\frac{C}{(X_{\ell[k]}-\theta)^2} - \frac{k}{f(X_{\ell[k]})^2}\log k \Big)\\
    & \rightarrow\infty,
\end{align*}
as $k\rightarrow \infty$. 

Combining the two events yields \(T_{k1}=o_P(1)\), completing Step 1 of the proof.

\textbf{Step 2:}
Equation~\eqref{eq:sketch3} yields
\[
X_k-\theta
=
\frac{1}{\beta}\Bigl(
  -\bar{\varepsilon}_{k-1}
  +C_{k-1}^{(\ell)}\mathbbm{1}\{\ell[k-1]=k-1\}
  +C_{k-1}^{(r)}\mathbbm{1}\{r[k-1]=k-1\}
\Bigr)\bigl(1+o_P(1)\bigr).
\]

When the left endpoint is refreshed at stage \(k-1\) (that is, \(\ell[k-1]=k-1\)),
\begin{align*}
\sqrt{N_{k-1}}(X_k-\theta)
=&
\Bigl(
  -\frac{\sqrt{N_{k-1}}\bar{\varepsilon}_{k-1}}{\beta}
  +\sqrt{N_{k-1}}C_{k-1}^{(\ell)}
\Bigr)\bigl(1+o_P(1)\bigr)\\
=&
\Bigl(
  -\frac{\sqrt{N_{k-1}}\bar{\varepsilon}_{k-1}}{\beta}
  +T_{k-1,1}+T_{k-1,2}
\Bigr)\bigl(1+o_P(1)\bigr).
\end{align*}

By Theorem~\ref{thm:RSA}, Step~1, and the bound \(T_{k-1,2}=\mathcal{O}_P(1)\), we conclude that \(\sqrt{N_{k-1}}(X_k-\theta)=\mathcal{O}_P(1)\).
To complete the argument, it remains to show that \(T_{k2}=o_P(1)\).

\textbf{Step 3:} 
Fix $\delta>0$.  It suffices to consider the case $\ell[k]=k-1$, for which  
\begin{align*}
\mathbb{P}\!\Bigl(\Bigl|\tfrac{X_k-\theta}{\,X_{k-1}-X_k\,}\Bigr|>\delta\Bigr)
&\le
\mathbb{P}\!\Bigl(\Bigl|\tfrac{X_k-\theta}{\,X_{k-1}-\theta\,}\Bigr|>\delta\Bigr)\\
&\le
\mathbb{P}\!\Bigl(\Bigl|\tfrac{X_k-\theta}{\,X_{k-1}-\theta\,}\Bigr|>\delta,\,
                 f(X_{k-1})^{2}\ge\tfrac{k\log k}{N_{k-1}}\Bigr)\\
&+
\mathbb{P}\!\Bigl(f(X_{k-1})^{2}<\tfrac{k\log k}{N_{k-1}}\Bigr).
\end{align*}

By Theorem~\ref{thm:RSA}, the second probability tends to $0$ under Assumption~\ref{assumption:differentiable}.  
For the first probability, we obtain that
\[
\Bigl|\tfrac{X_k-\theta}{\,X_{k-1}-\theta\,}\Bigr|
=
2\beta\sqrt{\tfrac{N_{k-1}}{k\log k}}\,
\lvert X_{k-1}-\theta\rvert+o_P(1),
\]
so that
\[
\mathbb{P}\!\Bigl(\Bigl|\tfrac{X_k-\theta}{\,X_{k-1}-\theta\,}\Bigr|>\delta,\,
                 f(X_{k-1})^{2}\ge\tfrac{k\log k}{N_{k-1}}\Bigr)
=
\mathbb{P}\!\Bigl(2\beta\sqrt{\tfrac{N_{k-1}}{k\log k}}\,
                 \lvert X_{k-1}-\theta\rvert>\delta\Bigr)+o(1).
\]

Step 2 established that $\sqrt{N_{k-1}}\,\lvert X_{k-1}-\theta\rvert=\mathcal{O}_P(1)$, whence the probability vanishes as $k\to\infty$. Therefore $T_{k2}=o_P(1)$, completing the proof.

\end{proof}
\subsection{Discontinuous regression function}
\label{subsection:discontinuous regression function}

In this section, we demonstrate the adaptivity of \text{\textsc{SPRB}} when the function \(f(x)\) exhibits jump discontinuity at the root \(\theta\), as stated precisely in Assumption~\ref{assumption:jump point}. Before stating the theoretical properties of \textsc{SPRB}, we view the problem through the lens of change-point (zero-crossing) regression.  In many scientific and econometric applications the mean response is piecewise smooth with a jump at an unknown threshold; locating that threshold is the classical jump change-point problem \cite{seijo_change-point_2011}.  The extant literature predominantly treats the batch-sampling regime, where all design points are observed in advance.  By contrast, we study a sequential design in which the statistician adaptively selects the next query $X_{t+1}$ on the basis of the data observed up to time $t$.

\begin{assumption}
\label{assumption:jump point}
There exist two positive constants \( \psi_{+},\psi_{-}\) such that for all \( x \in [\theta - C_\psi, \theta) \), $f(x)<-\psi_{-}$, and for all 
\( x \in (\theta, \theta + C_\psi] \), $f(x)>\psi_{+}$, for some constant $C_\psi>0$.
Additionally, the regression function satisfies
\begin{align}
\lim_{x \to \theta^+} f(x) = \mu_+, \qquad \text{and} \qquad \lim_{x \to \theta^-} f(x) = -\mu_-,
\end{align}
where $\mu_+,\mu_->0$.
\end{assumption}
\begin{theorem}
\label{thm:jump point} 
Let $X_{k+1}$ denote the estimation of the root $\theta$ after $k$ stages. Under Assumptions \ref{assumption:regularity} and \ref{assumption:jump point}, for every $\eta>1$,
\begin{align}
    \exp\Bigl(\kappa\sigma^{-1}
    \sqrt{n}\bigl(\log n\bigr)^{-\eta}\Bigr)\left(X_{k+1}-\theta\right) = \mathcal{O}_P(1),
\end{align}
where
$\kappa := 1-\frac{\mu_-}{\mu_++\mu_-}\vee
   \frac{\mu_+}{\mu_++\mu_-}\in (0,1)$.
\end{theorem}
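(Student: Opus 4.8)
The plan is to show that, in the jump regime, the \emph{weight-section} update contracts the confidence interval $\mathcal I_t=[X_{\ell t},X_{rt}]$ geometrically at a rate governed by $\kappa$, and then to convert this per-stage contraction into a bound in the total budget $n$ via the sample-size accounting of Proposition~\ref{thm:EN}.

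First I would record the structural form of the update. Writing $\omega_t=\widehat f_{\ell t}/(\widehat f_{\ell t}-\widehat f_{rt})\in(0,1)$, the weight-section formula~\eqref{eq:update} is the convex combination $X_{t+1}=\omega_t X_{rt}+(1-\omega_t)X_{\ell t}$, so once the sign of $\widehat f_{t+1}$ is resolved the new width is either $\omega_t(X_{rt}-X_{\ell t})$ or $(1-\omega_t)(X_{rt}-X_{\ell t})$. By Lemma~\ref{lemma:a.s. correct} there is an a.s.\ finite random index after which all signs are correct, so for all large $t$ the interval contains $\theta$ and exactly one of these two contractions occurs; by Theorem~\ref{thm: consistency} the endpoints satisfy $X_{\ell t}\to\theta^-$ and $X_{rt}\to\theta^+$, whence Assumption~\ref{assumption:jump point} gives $\widehat f_{\ell t}\to-\mu_-$ and $\widehat f_{rt}\to\mu_+$ (the averaging in \textsc{Update} drives the noise to zero since the sample counts diverge). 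Consequently $\omega_t\to\mu_-/(\mu_-+\mu_+)$ and the per-stage contraction factor obeys $\max(\omega_t,1-\omega_t)\to 1-\kappa$. Since both $X_{k+1}$ and $\theta$ lie in $\mathcal I_k$, telescoping $W_{t+1}\le(1-\kappa+o(1))W_t$ for $W_t:=X_{rt}-X_{\ell t}$ yields $|X_{k+1}-\theta|\le W_k\lesssim e^{-\kappa k}$, where I use $\log(1-\kappa)\le-\kappa$ and absorb the a.s.\ finite burn-in into an $\mathcal O_P(1)$ factor.

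Next I would account for the budget. Since Assumption~\ref{assumption:jump point} forces $|f(X_t)|$ to be bounded above and below by positive constants once $X_t$ is near $\theta$, the quantity $\delta_t=-2\sigma^2 f(X_t)^{-2}\log\alpha_t$ is of exact order $\sigma^2 t$, so Proposition~\ref{thm:EN} gives $\mathbb E N_t\asymp\sigma^2 t\log t$ and $N_t^\sharp\asymp(\log t)N_t\asymp\sigma^2 t\log^2 t$, with Theorem~\ref{thm:RSA} supplying the concentration of each stopping time about its mean. Summing over stages, the total budget obeys $n=\sum_{t=1}^{k-1}N_t^\sharp+N_k\asymp\sigma^2 k^2\log^2 k$ with high probability, the cumulative term dominating (in contrast to the differentiable case, where $N_k$ alone dominates). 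Inverting gives $k\asymp c_0\,\sigma^{-1}\sqrt n/\log n$ for a fixed constant $c_0>0$. The extra logarithm here comes precisely from the variance-reduction samples $N_t^\sharp$, and it is what forces $\eta>1$ rather than the $\eta=\tfrac12$ that $N_t$ alone would give.

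Finally I would combine the two estimates: $|X_{k+1}-\theta|\lesssim e^{-\kappa k}\le\exp(-\kappa c_0\sigma^{-1}\sqrt n/\log n)$, and since $c_0(\log n)^{\eta-1}\to\infty$ for any $\eta>1$ we have $c_0/\log n\ge(\log n)^{-\eta}$ eventually, so $\kappa k\ge\kappa\sigma^{-1}\sqrt n(\log n)^{-\eta}$ and the claimed $\mathcal O_P(1)$ bound follows. The main obstacle is the second half of the first step: making the contraction rigorous in the presence of noise and of the random coupling between the trajectory $\{X_t\}$ and the stopping times $\{N_t\}$ — one must verify that the noisy weights $\omega_t$ approach $\mu_-/(\mu_-+\mu_+)$ fast enough that $\prod_t(1-\kappa+o(1))$ genuinely behaves like $(1-\kappa)^k$ up to an $\mathcal O_P(1)$ factor, and that the $o(1)$ perturbations together with the fluctuations of $n$ about $k^2\log^2 k$ do not erode the exponent. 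The generous slack afforded by $\eta>1$, relative to the sharp $\eta=\tfrac12$, is exactly what lets these lower-order effects be absorbed without delicate bookkeeping.
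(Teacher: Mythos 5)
Your overall architecture matches the paper's: a per-stage contraction factor tending to $1-\kappa$ (obtained from $\widehat f_{\ell k}\to-\mu_-$, $\widehat f_{rk}\to\mu_+$, hence $\omega_k\vee(1-\omega_k)\to 1-\kappa$), telescoped into $|X_{k+1}-\theta|\lesssim e^{-\kappa k(1-o(1))}$, followed by a budget inversion $k\gtrsim \sigma^{-1}\sqrt{n}\,(\log n)^{-1-\eta'/2}$ from $\mathbb{E}N_t\asymp t\log t$ and $N_t^\sharp\asymp(\log t)N_t$. The paper implements the contraction on successive increments $|X_{k+1}-X_k|$ via a Ces\`aro argument (Lemma~\ref{lemma:Cesaro}) rather than on the interval width $W_t$, and its budget step (Lemma~\ref{lemma:EN jump}) is a one-sided Markov bound $n\le k(\log(k+1))^{1+\eta'}N_k$ with high probability, which is all the inversion needs — your two-sided claim $n\asymp\sigma^2k^2\log^2k$ with stopping-time concentration is stronger than required but harmless. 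Your bookkeeping of how the $(\log n)^{-\eta}$, $\eta>1$, slack absorbs both the $o(1)$ drift in the contraction factor and the polylog in the budget is sound and mirrors the paper's.

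There is, however, one genuine gap: you justify $X_{\ell t}\to\theta^-$ and $X_{rt}\to\theta^+$ by citing Theorem~\ref{thm: consistency}. That theorem only yields $X_{k+1}\to\theta$ a.s., and its proof explicitly allows the scenario in which only \emph{one} endpoint is ever refreshed (Case 1 there, where consistency is instead deduced from $\omega_k\to 0$). If, say, the right endpoint froze at some $X_{rt}>\theta$, then $\widehat f_{rk}$ would converge to $f(X_{rt})$ rather than $\mu_+$, the contraction factor $\omega_k\vee(1-\omega_k)$ would tend to $1$, and the entire $e^{-\kappa k}$ rate — the heart of the theorem — would collapse; consistency alone cannot rule this out. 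Excluding the frozen-endpoint scenario is precisely the content of the paper's Lemma~\ref{lemma:a.s._RL}, which uses Assumption~\ref{assumption:jump point} together with Lemma~\ref{lemma:almost surely}: on the event that the right endpoint is last updated at stage $t$ and all subsequent queries fall left of $\theta$, the weights satisfy $\liminf_k\omega_k\ge \omega^*>0$ (because $\widehat f_{\ell k}$ is bounded away from $0$ by the jump), and taking limits in $X_{\ell,k+1}-\theta\ge X_{\ell k}-\theta+\omega_k(X_{rt}-\theta)$ along the monotone bounded sequence $\{X_{\ell k}\}$ forces $\omega^*(X_{rt}-\theta)\le 0$, a contradiction. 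Note that this is distinct from the caveat you raise in your final paragraph — the \emph{speed} at which $\omega_t$ approaches $\mu_-/(\mu_-+\mu_+)$ — which the logarithmic slack indeed absorbs; no amount of slack in $\eta$ repairs a frozen endpoint, so this divergence argument must be supplied before your contraction step is valid.
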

\begin{remark}
\cite{lim_convergence_2011} showed that, under Assumptions~\ref{assumption:regularity} and \ref{assumption:jump point}, the convergence rate of Robbins--Monro's procedure \eqref{eq:RM-SA} is $\mathcal{O}(n^{-1})$. As demonstrated in Theorem~\ref{thm:jump point}, \textsc{SPRB} achieves an exponential rate of convergence. We now highlight some interesting observations.

The constant $\kappa$ can be interpreted as the ratio of the signal strengths from the two sides, thereby partially characterizing the difficulty of the root-finding problem. As noted in Remark~\ref{remark:other}, if on one side (say, $x<\theta$) the regression function satisfies $f(x)=o(x-\theta)$—meaning it merely touches $y=0$—then the preceding result becomes vacuous.

In the jump discontinuity setting, the convergence rate of \textsc{SPRB} is $\mathcal{O}(\exp(-\sqrt{n}))$, up to a logarithmic factor. One might naturally wonder why it is not of order $\mathcal{O}(e^{-n})$. We contend that this discrepancy can be interpreted as the price of adaptivity, or equivalently, the cost of lack of prior knowledge on the jump discontinuity of the regression function at the root $\theta$. Indeed, a faster convergence rate can be recovered by setting $T(j,\alpha_k)$ as a slowly varying function of $j$. However, this choice does not yield satisfactory convergence in the other cases discussed in Sections~\ref{subsection:differentiable regression function} and \ref{subsection:higher-order regression function}.

\end{remark}

Details of the proof of Theorem \ref{thm:jump point} are provided in the supplementary material \ref{section:additional proof for jump point}. 

\subsection{Higher order regression function}
\label{subsection:higher-order regression function}

Another natural question is the level of adaptivity attained by \textsc{SPRB} when the first $\gamma-1$ derivatives of the regression function $f$ vanish at the root but $f^{\gamma}(\theta)\neq 0$ when $\gamma\in \bbN^+$. To settle this problem, we consider its general form: 
\begin{align}
\label{eq:higher order}
  f(x) = \text{sign}(x - \theta)\beta\,\lvert x - \theta \rvert^\gamma(1 + o(1)),
\end{align}
as $x\rightarrow \theta$ and $\beta>0$.
This  form is also consistent with assumptions commonly found in the literature \cite{ljung_stochastic_1992}.

The assumption that the signal of the function at the root \(\theta\) vanishes faster than linearly arises naturally in scenarios such as \(f(x) = (x - \theta)^3\).  Our main motivation stems from the local behavior of \(f\) near \(x = \theta\). Specifically, suppose there exists \(m \in \mathbb{N}^+\) such that the first \(2m \) derivatives of \(f\) at \(\theta\), namely \(f^{(j)}(\theta)\) for \(1 \le j \le 2m - 1\), all vanish. The central question is whether one can design an algorithm whose convergence rate surpasses the usual logarithmic order under these conditions. Theorem~\ref{thm:higher order} provides a positive answer, exceeding the \((\log n)^{\nicefrac{1}{(1-\gamma)}}\) rate associated with the standard Robbins-Monro procedure~\eqref{eq:RM-SA}.
In this subsection, we focus on a regression function of the form given equation \eqref{eq:higher order}.

We now introduce the full version of \textsc{SPRB}, in which, after a sequence of initial bisection updates, we employ a hybrid procedure that alternates between bisection and weight‑section updates. Define the sequence $\{t_n\}_{n\ge1}$ recursively by
$
t_1 = 1$ and $t_{n+1} = t_n + \lceil \log(t_n+1) \rceil,\forall n \in \bbN^+$.
Let
$
\mathcal{T} = \{t_n: n\in\mathbb{N}^+\}$ and let $\mathcal{T}(k)=\mathcal{T}\cap [k]$. A straightforward analysis shown in Lemma~\ref{lemma:cardinality} shows that the number of grid points not exceeding $k$, namely $|\mathcal{T}(k)|$,
grows like $\mathcal{O}\Bigl(\frac{k}{\log k}\Bigr)$ as $k\rightarrow \infty$.

Compared to \textsc{SPRB-Basic}, the full version of \textsc{SPRB} (Algorithm \ref{alg:sprb}) adopts a hybrid approach that integrates \textsc{Bisection} with \textsc{Weight-section}. This combination accelerates convergence when the regression function is differentiable with a vanishing first-order derivative, while preserving the asymptotic properties in all other cases.

\begin{algorithm}[H]
\caption{\textsc{SPRB}}
\label{alg:sprb}
\KwIn{tolerance level \( \Delta \in (0, \frac{1}{2}) \), initialization interval $\mathcal{I}$}
$\alpha\gets \Delta/3$\\
\tcp{Initialization}
$[X_{\ell 1},X_{r1}]\gets \mathcal{I}$\\
\tcp{Mixture of \textsc{Bisection} and \textsc{Weight-section}}
\While{$t\leq k$ }{
\eIf{$t\in \mathcal{T}\mathrm{\quad or\quad }|X_{rt}-X_{\ell t}|>\delta$}
{$\bm H_{t+1}\gets$\textsc{Update}($\bm H_t$,\text{bisection})}{$\bm H_{t+1}\gets$
\textsc{Update}($\bm H_t$,\text{weight-section})
}
}
%\eIf{$k+1\in \mathcal{T}$}{$X_{k+1}\gets X_{k}$}{$X_{k+1}\gets X_{k+1}$}
\KwOut{$X_{k+1}$}
\end{algorithm}

\begin{assumption}
\label{assumption:higher smooth}
For $\gamma>1$, the regression function satisfies equation~\eqref{eq:higher order} as $x\rightarrow\theta$ and $\beta>0$.

\end{assumption}

\begin{theorem}
\label{thm:higher order}

Let $X_{k+1}$ denote the estimate of the root $\theta$ after $k$ stages. Under Assumptions \ref{assumption:regularity} and \ref{assumption:higher smooth},
\begin{align*}
    n^{\frac{1}{2\gamma}-\delta} \left(X_{k+1}-\theta\right) = \mathcal{O}_P(1),
\end{align*}
for any $\delta>0$.
\end{theorem}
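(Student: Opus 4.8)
The plan is to localize the whole argument to the final stage and then turn the sample-size law of Proposition~\ref{thm:EN} into a rate. By Lemma~\ref{lemma:a.s. correct} and Theorem~\ref{thm: consistency}, with probability one there is a finite $T$ after which every sign is read correctly, $\theta$ lies in every interval $\mathcal{I}_t$, and the ``Case~2'' branch of \textsc{Update} never fires again; I would argue on this event for $k\ge T$. Writing $d_t:=|X_t-\theta|$ and invoking Assumption~\ref{assumption:higher smooth} to replace the local signal by $|f(X_{k+1})|\asymp\beta\,d_{k+1}^{\gamma}$, the quantity $\delta_k=-2\sigma^2 f(X_{k+1})^{-2}\log\alpha_k\asymp k\,\beta^{-2}d_{k+1}^{-2\gamma}$ governs the last stopping time, and Proposition~\ref{thm:EN} yields
\[
N_k\;\asymp\;\delta_k\log(\delta_k+1)\;\asymp\;\frac{k}{\beta^{2}d_{k+1}^{2\gamma}}\,\log(\delta_k+1).
\]
Solving for $d_{k+1}$ gives $d_{k+1}\asymp\bigl(k\log(\delta_k+1)\,\beta^{-2}N_k^{-1}\bigr)^{1/(2\gamma)}$, which already displays the exponent $-1/(2\gamma)$; everything then hinges on showing that the remaining $k$- and $\log(\delta_k+1)$-factors are harmless.

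The decisive structural input is a lower bound on the speed of interval contraction in the stage index, and this is precisely what the hybrid schedule of Algorithm~\ref{alg:sprb} buys. On the good event each weight-section step replaces $\mathcal{I}_t$ by one of its sub-intervals, so the width $w_t:=X_{rt}-X_{\ell t}$ never increases, whereas each forced bisection at $t\in\mathcal{T}$ halves it. Since $|\mathcal{T}(k)|\asymp k/\log k$ by Lemma~\ref{lemma:cardinality}, I would conclude $d_{k+1}\le w_k\lesssim 2^{-\Theta(k/\log k)}$. Re-inserting this into the display shows $\log N_k\gtrsim k/\log k$, so $N_k$ — and hence the total budget $n\ge N_k$ — grows faster than any power of $k$; equivalently $k=n^{o(1)}$, whence $\log(\delta_k+1)\lesssim\log N_k$ and every such factor is $n^{o(1)}$.

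It remains to promote the informal ``$\asymp$'' to bounds in probability and to compare $n$ with $N_k$. Conditioning on $\mathcal{F}_{k-1}$ freezes $X_{k+1}$, hence $\mu_k=f(X_{k+1})$, and Theorem~\ref{thm:RSA} gives $N_k/N_k'\xrightarrow{P}1$ with $N_k'$ solving $N_k'/\log(N_k'+1)=\delta_k$, i.e.\ $N_k'\asymp\delta_k\log(\delta_k+1)$; this provides the two-sided control of $N_k$ needed to invert the sample-size relation. Because $d_t$ is eventually non-increasing and $N_t^{\sharp}\asymp(\log t)N_t$, the budget obeys $N_k\le n=\sum_{t<k}N_t^{\sharp}+N_k\le\mathrm{poly}(k)\,N_k$, so $N_k\gtrsim n/\mathrm{poly}(k)$. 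Substituting,
\[
d_{k+1}\;\lesssim\;\Big(\frac{\mathrm{poly}(k)}{\beta^{2}\,n}\Big)^{1/(2\gamma)}\;=\;n^{-1/(2\gamma)}\,n^{o(1)}\;\le\;n^{-1/(2\gamma)+\delta}
\]
for every fixed $\delta>0$ once $n$ is large, which is exactly $n^{1/(2\gamma)-\delta}(X_{k+1}-\theta)=\mathcal{O}_P(1)$.

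The step I expect to be the main obstacle is the contraction estimate for $\gamma>1$. Setting $a=\theta-X_{\ell t}$ and $b=X_{rt}-\theta$, the noiseless weight-section displacement equals $ab(a^{\gamma-1}-b^{\gamma-1})/(a^{\gamma}+b^{\gamma})$, which degenerates to $\approx b$ when the interval is strongly asymmetric ($a\gg b$); thus weight-section alone may stall, and only the periodic bisections force genuine geometric shrinkage. Making ``the width halves at each bisection and never grows otherwise'' fully rigorous requires a careful accounting of the branching in \textsc{Update}, absorbing the finitely many sign errors via Lemma~\ref{lemma:a.s. correct}, and checking that the coupling among $d_{k+1}$, $N_k$, and $k$ does not corrupt the in-probability inversion of the sample-size law.
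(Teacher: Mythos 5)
Your proposal follows the paper's own proof in all essentials: Proposition~\ref{thm:EN} converts the last-stage stopping rule into $N_k\asymp k\beta^{-2}d_{k+1}^{-2\gamma}\log(\delta_k+1)$; the forced bisections on $\mathcal{T}$ together with Lemma~\ref{lemma:cardinality} give $d_{k+1}\le 2^{-|\mathcal{T}(k)|}\lesssim 2^{-ck/\log k}$, so that every residual factor that is polynomial in $k$ or logarithmic in $\delta_k$ is $n^{o(1)}$ and is absorbed into the $\delta$-slack; and the total budget $n$ is compared with the last-stage size $N_k$. You also correctly diagnose why pure weight-section stalls for $\gamma>1$ and why the periodic bisections are indispensable --- this is exactly the content of the paper's remark following the theorem --- and your use of the good event from Lemma~\ref{lemma:a.s. correct} to freeze the branching of \textsc{Update} matches the paper's implicit reliance on it.

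The one step whose justification fails as written is the budget comparison $n\le\mathrm{poly}(k)\,N_k$. You derive it from the claim that $d_t=|X_t-\theta|$ is eventually non-increasing; but on the good event only the interval widths $w_t$ are monotone, while $d_{t+1}\le w_t$ can fall arbitrarily far below $w_t$: an intermediate query $X_{t+1}$ that happens to land unusually close to $\theta$ makes $N_t\asymp t\,|f(X_{t+1})|^{-2}\log(\cdot)$ enormous without improving the final output $X_{k+1}$, so $\max_{t<k}N_t\lesssim\mathrm{poly}(k)\,N_k$ is \emph{not} a pathwise consequence of any monotonicity and must be excluded probabilistically. The paper does this in Lemma~\ref{lemma:number higher} (proved as in Lemma~\ref{lemma:number smooth}) via the recursion $\rho_{k+1}\le (N_k/N_{k+1})\rho_k+\log(k+1)\,N_{i_{k+1}}/N_{k+1}$ combined with tail bounds of the form $\mathbb{P}\bigl(N_t f(X_{t+1})^2>\cdot\bigr)$, which yields the stronger conclusion $n/N_k\xrightarrow{P}1$; a related point is that Proposition~\ref{thm:EN} controls only $\mathbb{E}N_t$, so the two-sided in-probability inversion you need comes from the tail estimates in its proof together with Theorem~\ref{thm:RSA} and Corollary~\ref{corollary:ratio_in_p}, which you do invoke. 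With the monotonicity claim replaced by this anti-concentration control on $f(X_{t+1})^2N_t$, your argument coincides with the paper's proof; your cruder polynomial bound would then indeed suffice for the stated rate.
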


\begin{remark} We first demonstrate the necessity of the bisection procedure when the regression function has only a non-zero higher-order derivative at the root $\theta$.
Consider the noiseless setting and take \(f(x)=\beta(x-\theta)^{\gamma}\) as an example.  Then
\[
  X_{t+1}
  \;=\;
  \frac{\beta\bigl(X_{\ell t}-\theta\bigr)^{\gamma}X_{rt}
        -\beta\bigl(X_{rt}-\theta\bigr)^{\gamma}X_{\ell t}}
       {\beta\bigl(X_{\ell t}-\theta\bigr)^{\gamma}
        -\beta\bigl(X_{rt}-\theta\bigr)^{\gamma}},
\]
which, in general, does not guarantee that \(X_{t+1}\) recovers \(\theta\). 
As shown in Theorem~\ref{thm:higher order}, we obtain an almost optimal rate by leveraging the mixture of bisection~\eqref{eq:bisection:update} and weight-section~\eqref{eq:update}. The oracle rate is $\widetilde{\mathcal{O}}\left(n^{-\nicefrac{1}{2\gamma}}\right)$ if one knows the value of the order of the first nonvanishing derivative $\gamma$. In this case, one can modify \text{\textsc{SPRB}} by replacing weight-section~\eqref{eq:update} with 
\begin{align}
\label{eq:higher-order update}
    X_{k+1} = \frac{\widehat{f}_{\ell k}X_{rk}^\gamma-\widehat{f}_{rk}X_{\ell k}^\gamma}{\widehat{f}_{\ell k}-\widehat{f}_{rk}}.
\end{align}
\end{remark}

\subsection{Confidence sequence}
\label{section:CI}
The concept of a time-uniform (or anytime-valid) confidence sequence dates back to \citep{darling_confidence_1967}. More recent work, such as \citep{howard_time-uniform_2021,waudby-smith_estimating_2024}, has constructed nonasymptotic time-uniform confidence sequences using various time-uniform concentration bounds. In this section, we demonstrate that \textsc{SPRB} produces \emph{nonasymptotically time-uniform confidence sequences} \(\mathcal{I}_t = [X_{\ell t}, X_{rt}]\) at every stage \(t\in \mathbb N^+\). We introduce the \(\Delta\)-time-uniform confidence sequence as follows.

\begin{definition}A sequence $\{\mathcal{I}_k\}_{k\geq 1}$ of subsets of $\bbR$ is a $\Delta$-time-uniform confidence sequence if 
\begin{align}
    \bbP\bigl(\exists k\in \bbN ^+,\mathcal{I}_k\not\owns \theta\bigr)\leq \Delta.
\end{align}
\end{definition}Recent advances in statistical inference for stochastic approximation (see, e.g., \citep{su_higrad_2023,lee_fast_2022,fang_online_2018}) have yet to yield a time-uniform guarantee. Notably, \cite{xie_asymptotic_2024} develop a coverage sequence \(\{\widetilde{\mathcal{I}}_k\}_{k\geq 1}\) and establish that, asymptotically,
$
\lim_{m\rightarrow\infty}\mathbb{P}\Bigl(\exists\, t\geq m: \widetilde{\mathcal{I}}_k\not\owns\theta\Bigr) \leq \Delta$ under the linear case (Section~\ref{subsection:differentiable regression function}). However, their method fundamentally relies on asymptotic arguments. As discussed in Section~\ref{section:intro}, the varying convergence rate and the often non-standard limiting distribution of the Robbins–Monro procedure \eqref{eq:RM-SA}, which depend on the local behavior of $f$ around $\theta$ (e.g., \cite[Theorem 8.2]{ljung_stochastic_1992}), can complicate the construction of valid confidence intervals or sequences when one normalizes by the associated rate. To overcome these issues and to ensure adaptivity and nonasymptotic validity over a broad range of local behaviors of the regression function, we propose an automatic construction of a confidence sequence \(\mathcal{I}_k = [X_{\ell k}, X_{r k}]\) that is nonasymptotically time-uniform at each stage \(k \in \mathbb{N}^+\). This approach eliminates the need for explicit calibration of the convergence rate or the limiting distribution of the stochastic approximation process and is therefore extremely appealing.
\begin{proposition}
\label{prop:type1-error}  
For any $\Delta\in (0,\nicefrac{1}{2})$, if $\alpha =\Delta/3$, then the confidence sequence $\bigl\{\mathcal{I}_k = [X_{\ell k},X_{rk}]\bigr\}_{k\geq 1}$ is non-asymptotically time-uniform with $\Delta$-tolerance. Precisely, we obtain 
\begin{align}
    \bbP(\exists k\in \bbN^+,\mathcal{I}_k\not \owns \theta)\leq \Delta.
\end{align}
\end{proposition}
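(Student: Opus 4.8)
The plan is to reduce a containment failure to the occurrence of a \emph{sign error} and then control the total sign-error probability by a union bound over all stages; the prescribed constant $\alpha=\Delta/3$ should emerge because each stage contributes through (at most) one fresh query and one resample, with the geometric weights $\alpha_t=\alpha 2^{-t}$ summing to $\alpha$.

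First I would isolate the only randomness that can break containment. Define the good event
\[
G:=\bigcap_{t\ge 1}\Bigl\{\mathrm{sign}(\widehat f_{t+1})=\mathrm{sign}\bigl(f(X_{t+1})\bigr)\Bigr\}
\cap\bigcap_{t\ge 1}\Bigl\{\mathrm{sign}(f_t^\sharp)=\mathrm{sign}\bigl(f(X_{\cdot t})\bigr)\Bigr\},
\]
i.e.\ every \textsc{StageSampling} sign determination at a fresh query and every resample sign determination is correct. I claim that on $G$ one has $\theta\in\mathcal I_k$ for \emph{every} $k$, proved by induction on $t$. The base case holds since $\theta\in\mathcal I=[X_{\ell 1},X_{r1}]$ and the initializers $\widehat f_{\ell 1}<0<\widehat f_{r1}$ carry the correct signs by the sign requirement in Assumption~\ref{assumption:regularity} (so $x>\theta\iff f(x)>0$). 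For the inductive step, on $G$ every stored endpoint value $\widehat f_{\cdot t}$ inherits the true sign of $f(X_{\cdot t})$ (being either a correctly signed determination, a sign-preserving convex combination thereof, or an initializer), so the consistency test $f_{\cdot t}^\sharp\,\widehat f_{\cdot t}>0$ always succeeds; hence \textsc{Update} always selects the branch $\mathcal I_{t+1}\gets[X_{\ell t},X_{t+1}]$ (resp.\ $[X_{t+1},X_{rt}]$), and since $\widehat f_{t+1}$ is correctly signed, $X_{t+1}$ lies on the correct side of $\theta$, giving $\theta\in\mathcal I_{t+1}$. The bisection branch of the full \textsc{SPRB} is covered verbatim (bisection only changes how $X_{t+1}$ is chosen), so the conclusion holds for both algorithms. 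Therefore $\{\exists k:\theta\notin\mathcal I_k\}\subseteq G^c$, and it remains to bound $\bbP(G^c)$.

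For the union bound I would treat the two determination types separately. For the fresh query at stage $t$, conditioning on $\mathcal F_t$ (which makes $X_{t+1}$ measurable, with $f(X_{t+1})\ne 0$), the moving-boundary false-sign estimate derived in Section~\ref{section:algorithm} gives $\bbP\bigl(\mathrm{sign}(\widehat f_{t+1})\ne\mathrm{sign}(f(X_{t+1}))\mid\mathcal F_t\bigr)\le\alpha_t$, so these contributions sum to $\sum_{t\ge1}\alpha_t=\alpha$. For the resample $f_t^\sharp$, a fixed average of $N_t^\sharp=(\log_2(t+1)-1)N_{\cdot t}$ fresh observations, I would condition on the history and apply sub-Gaussian concentration to obtain $\bbP(\text{resample error})\le\exp\!\bigl(-cN_t^\sharp f(X_{\cdot t})^2/\sigma^2\bigr)$, then use the stopping relation $N_{\cdot t}\,\widehat f_{\cdot t}^{\,2}>-2\sigma^2\log(N_{\cdot t}+1)\log\alpha_{\cdot[t]}$ (supported by Proposition~\ref{thm:EN}) to show that $N_t^\sharp f(X_{\cdot t})^2$ dominates $-\sigma^2\log\alpha_t$ thanks to the extra $\log_2(t+1)$ factor, forcing this tail below a summable sequence contributing at most $2\alpha$. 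Combining the two types yields $\bbP(G^c)\le\alpha+2\alpha=3\alpha=\Delta$.

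The main obstacle is precisely the resample bound. Unlike the fresh query, $f_t^\sharp$ is a \emph{fixed}-sample average rather than a sequential boundary test, so its error is not directly governed by $T(\cdot,\alpha_t)$; converting the Gaussian tail into a bound of the right order requires carefully linking the random accumulated count $N_t^\sharp$ to the signal strength $f(X_{\cdot t})^2$ and the original confidence level $\alpha_{\cdot[t]}$, while respecting the measurability of the random $N_t^\sharp$ and the fact that $|\widehat f_{\cdot t}|$ only approximates $|f(X_{\cdot t})|$. The delicate case is an endpoint created early (small $\alpha_{\cdot[t]}^{-1}$) but then repeatedly resampled (so $N_{\cdot t}$ is enormous) versus a recently created endpoint (large $\alpha_{\cdot[t]}^{-1}$); both regimes must be shown to leave the tail summable. (Note this quantitative, for-all-$k$ control is strictly stronger than the eventual a.s.\ correctness of Lemma~\ref{lemma:a.s. correct}, which does not by itself yield time-uniform validity.) Once the resample tail is dispatched, the union bound closes with the choice $\alpha=\Delta/3$.
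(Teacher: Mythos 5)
Your proposal is correct and follows essentially the same route as the paper: the paper likewise reduces containment failure to sign errors and closes with a union bound giving $\alpha_k$ per stage for the fresh query (Lemma~\ref{lemma:false sign}, inequality~\eqref{eq:false sign1}) and $2\alpha_k$ for the resample (inequality~\eqref{eq:false sign2}), summing to $3\alpha=\Delta$; the resample tail you flag as the main obstacle is exactly what the paper's proof of~\eqref{eq:false sign2} dispatches, via the same stopping-boundary lower bound on $N_i f_{\cdot k}^2$ combined with sub-Gaussian concentration and the accumulated $\prod_j[\log(j+1)-1]$ factor from repeated resampling. The only difference is bookkeeping — you use a global good event plus induction where the paper decomposes into disjoint first-failure events $\mathcal{S}_k$ conditioned on $\mathcal{I}_{k-1}\owns\theta$ — which is immaterial to the bound.
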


\begin{proof}
Let $\mathcal{S}_k = \bigl\{\forall 1\leq t\leq k-1:\mathcal{I}_t\owns\theta,\mathcal{I}_{k}\not\owns \theta\big\},\forall k\geq 2$ be a sequence of disjoint events. 
\begin{align*}
    & \bbP\Big(\exists k\in \bbN^+,\mathcal{I}_k\not \owns \theta\Big)
    =\sum\limits_{k=2}^\infty \bbP(\mathcal{S}_k) \\
    \leq & \sum\limits_{k=2}^\infty \Big[\bbP\big(\{\text{sign}\big(\widehat{f}(X_{k})=\text{sign}\big(f(X_k)\big)\}\cap \mathcal{S}_k\big)+ 
    \bbP\big(\{\text{sign}\big(\widehat{f}(X_{k})\neq \text{sign}\big(f(X_k)\big)\}\cap \mathcal{S}_k\big)
    \Big]\\
    \leq & 
    \sum\limits_{k=2}^\infty \Big[\bbP\big( \mathcal{S}_k\mid \{\text{sign}\big(\widehat{f}(X_{k})=\text{sign}\big(f(X_k)\big)\}\cap \{\mathcal{I}_{k-1}\owns \theta\}\big)\\
    + &
    \bbP\big(\{\text{sign}\big(\widehat{f}(X_{k})\neq \text{sign}\big(f(X_k)\big)\}\cap \mathcal{S}_k\mid \mathcal{I}_{k-1}\owns \theta\big)
    \Big].
\end{align*}
Regarding the first summation, note that conditioned on $\big\{ \mathcal{I}_{k-1}\owns \theta\big\}$ and $\{\text{sign}\big(\widehat{f}(X_{k})=\text{sign}\big(f(X_k)\big)\}$, the only possibility that $\{\mathcal{I}_k\not\owns \theta\}$ occurs is that $\text{sign}(f^\sharp_{\boldsymbol{\cdot},k-1})\neq \text{sign}(f(X_{\boldsymbol{\cdot},k-1}))$.
\begin{align*}
    & \bbP\big( \mathcal{S}_k\mid \{\text{sign}\big(\widehat{f}(X_{k})=\text{sign}\big(f(X_k)\big)\}\cap \{\mathcal{I}_{k-1}\owns \theta\}\big)\\
    \leq &
   \bbP\big(\text{sign}(f^\sharp_{\boldsymbol{\cdot},k-1})\neq \text{sign}(f(X_{\boldsymbol{\cdot},k-1}))\mid \{\text{sign}\big(\widehat{f}(X_{k})=\text{sign}\big(f(X_k)\big)\}\cap \{\mathcal{I}_{k-1}\owns \theta\}\big)\\
   \leq & \bbP
   \big(\text{sign}(f^\sharp_{\ell,k-1})>0 \text{ and } \text{sign}(f(X_{\ell,k-1})<0)\mid \\
   &\quad\{\text{sign}\big(\widehat{f}(X_{k})=\text{sign}\big(f(X_k)\big)>0\}\cap \{\mathcal{I}_{k-1}\owns \theta\}\big)\bbP(\text{sign}\big(\widehat{f}(X_{k})=\text{sign}\big(f(X_k)\big)>0)\\
    + & 
     \bbP
   \big(\text{sign}(f^\sharp_{r,k-1})<0 \text{ and } \text{sign}(f(X_{\ell,k-1})>0)\mid \\
   &\quad\{\text{sign}\big(\widehat{f}(X_{k})=\text{sign}\big(f(X_k)\big)<0\}\cap \{\mathcal{I}_{k-1}\owns \theta\}\big)\bbP(\text{sign}\big(\widehat{f}(X_{k})=\text{sign}\big(f(X_k)\big)<0)\\
  \leq &  2\alpha_k,
\end{align*}By Lemma~\ref{lemma:false sign}, and in particular inequality~\eqref{eq:false sign2}, the probability of the first event is at most $\alpha_{k-1}$.  Applying inequality~\eqref{eq:false sign1} to the second term yields
\begin{align*}
\mathbb{P}\bigl(\exists\,k\ge2:\,\theta\notin\mathcal{I}_k\bigr)
  \le
  3\sum_{k=2}^{\infty}\alpha_k
  \le
  3\alpha.
\end{align*}
Since $\alpha=\Delta/3$, the result follows.

\end{proof}

\begin{remark}
As established in Proposition~\ref{prop:type1-error}, by appropriately choosing the boundary constant \(\alpha\) 
and the threshold \(\delta > 0\) for the interval width, the resulting interval \([X_{\ell k}, X_{r k}]_{k\in \mathbb N^+}\) 
is valid nonasymptotically for any $t\in \mathbb N^+$. All analyses in this section focus on Type-1 error control. In contrast, the width 
of our confidence intervals (or equivalently, the power of a corresponding sequential test, 
for instance \(H_{0}: \theta = \theta_{0}\) vs.\ \(H_{1}: \theta \neq \theta_{0}\)), depends on the 
convergence rate of \(X_k\).

\end{remark}

\section{Simulation Study}
\label{section:simulation}

In this section, we compare the finite-sample performance of three stochastic‐approximation procedures for estimating the root $\theta$:
\begin{enumerate}
  \item \textsc{SPRB};
  \item the Robbins–Monro algorithm with step‐size $\alpha_n = \frac{\alpha}{n}$;
  \item the adaptive Robbins–Monro algorithm.
\end{enumerate}
For the standard Robbins–Monro method, we fix $\alpha = 1$ to reflect the setting where neither $f'(\theta)$ nor its bounds are known \emph{a priori}.  In the differentiable regime, we also implement an \emph{oracle} Robbins–Monro algorithm with step‐size sequence
$\alpha_n \;=\;\frac{1}{n\,f'(\theta)}$,
assuming $f'(\theta)$ is available, which provides the minimal asymptotic variance $\frac{\sigma^2}{\beta^2}$.

Our simulation study is divided into two parts: (i) convergence behavior; and (ii) interval performance, including Type-I error for non‐anytime and anytime confidence sequences, and the average interval width as a proxy for power.  We consider three canonical function classes (see Table~\ref{table:rate}):
\begin{enumerate}
  \item $f(x)=\beta\,(x-\theta) + o\bigl|x-\theta\bigr|^{\gamma}$,\quad $\gamma>1$;
  \item $f(x)=\mathrm{sign}(x-\theta)\,\lvert x-\theta\rvert^{\gamma}$,\quad $\gamma>1$;
  \item $f(x)=-\varphi_{-}\,\mathbbm{1}_{\{x<\theta\}} + \varphi_{+}\,\mathbbm{1}_{\{x>\theta\}}$.
\end{enumerate}

%\subsection{Estimation error}
%\label{subsection:est_error}

\begin{table}[H]
\caption{Estimation errors for three root‐finding procedures for fixed maximum stages.}
\label{table:estimation_error}
\centering
\resizebox{\linewidth}{!}{%
\begin{tabular}{@{}lccccc@{}}
\hline
\textbf{Algorithm}& \multicolumn{3}{c}{$f(x)=\beta(x-\theta)$} 
&
$f(x)=100(x-\theta)^3$ 
&
$f(x) = -\mathbbm 1[x<\theta] + \mathbbm 1[x\geq \theta]$
\\[4pt]
\cline{2-4}
      & $\beta = 1$& $\beta = \nicefrac{1}{2}$ & $\beta = \nicefrac{1}{4}$ & \\[4pt]
\hline
\textbf{\textsc{SPRB}} &$1.17\times 10^{-4}$    & 
$6.81\times 10^{-4}$
& $20.75\times 10^{-4}$ & $4.83\times 10^{-3}$&$1.74\times10^{-2}$ \\
\textbf{SA}     &    $3.86\times 10^{-4}$        & $11.96\times 10^{-4}$ & $42.97\times 10^{-4}$ & $20.24\times 10^{-3}$ & $15.48\times 10^{-2}$ \\
\textbf{Oracle SA}  & $1.11\times 10^{-4}$       & $3.96\times 10^{-4}$ & $15.35\times 10^{-4}$ & NA  & NA  \\
\textbf{ASA}     & $1.08\times 10^{-4}$   &$6.84\times 10^{-4}$   &  $24.98\times 10^{-4}$& $55.83\times 10^{-3}$ & $6.29\times 10^{-2}$\\
\hline
\end{tabular}%
}
\end{table}
In all experiments, the true root is set to $\theta = 0.3$. The \textsc{SPRB} algorithm is initialized on the bracketing interval $[0,1]$, whereas all competing stochastic approximation methods start from the initial value $X_{1} = 0.5$. All estimates and confidence intervals from competing algorithms are clipped to the interval $\mathcal{I} = [0,1]$ whenever they fall outside the valid domain.

Further details and empirical results appear in Section~\ref{section:appendix simulation} of the supplemental material.

Table~\ref{table:estimation_error} reports the mean absolute estimation errors over \(100\) Monte Carlo replications with a maximum stage of \(k=5\).  In each simulation run, we first run \textsc{SPRB} and record the (random) budget size 
$n$ required for $k$ stages. We then average the estimation errors across replications. All other methods use the same number of responses, so entries should be compared \emph{columnwise}; differences in total sample size stem solely from the regression design and therefore do not affect within-column contrasts.

When the regression slope is \(\beta = 1\), the proposed \textsc{SPRB}, ASA, and the oracle SA achieve virtually identical performance, all of which dominate the classical Robbins--Monro procedure.  This pattern is consistent with theory, as the three estimators \text{sd} attain the minimal asymptotic variance.  As \(\beta\) decreases (second and third columns), \textsc{SPRB} and ASA remain competitive, whereas SA deteriorates markedly, illustrating the latter’s sensitivity to the unknown slope.  

To investigate the case of vanishing first–order derivatives, we set the regression function to  
$f(x)=100\,(x-\theta)^3$.
Because \(f'(\theta)=0\), the signal weakens rapidly as the iterates approach the root, inflating the variance of all stochastic–approximation schemes.  Theory predicts a minimax rate of \(n^{-1/6}\); achieving an average estimation error of \(\pm 0.01\) would thus require on the order of \(n \approx 10^{12}\) observations.  The coefficient \(100\) amplifies the signal just enough to keep the simulated sample sizes within practical limits while preserving the essential difficulty of the problem. The results in Table~\ref{table:estimation_error} show that the proposed \textsc{SPRB} markedly outperforms the standard stochastic approximation (SA) and the adaptive stochastic approximation (ASA) in finite samples, thereby corroborating Theorem~\ref{thm:higher order}.

In the discontinuous setting—where \(f\) has a jump at the root \(\theta\) (last column)—no stochastic approximation enjoying a canonical optimality property is available in the literature.  In this regime \textsc{SPRB} outperforms all competitors by a substantial margin, highlighting its adaptivity to discontinuous regression functions.  

Although ASA performs well in finite samples when the derivative exists and is positive, its accuracy is sensitive to the initial slope estimate \(\widehat{\beta}\). If \(\widehat{\beta}\) is substantially misspecified, the logarithmic convergence rate of \(\widehat \beta\) limits the overall rate of the ASA procedure.

To evaluate finite‐sample performance, we report the estimation errors in two representative scenarios: (i) a differentiable function whose derivative at $\theta$ exceeds half the stepsize in Figure~\ref{fig:diff_overall}, and (ii) a discontinuous function in Figure~\ref{fig:jump_overall}.  Results for all other scenarios appear in the supplemental material.
\begin{figure}[H]
    \centering
    \begin{minipage}{0.45\linewidth}
        \centering
        \includegraphics[width=\linewidth]{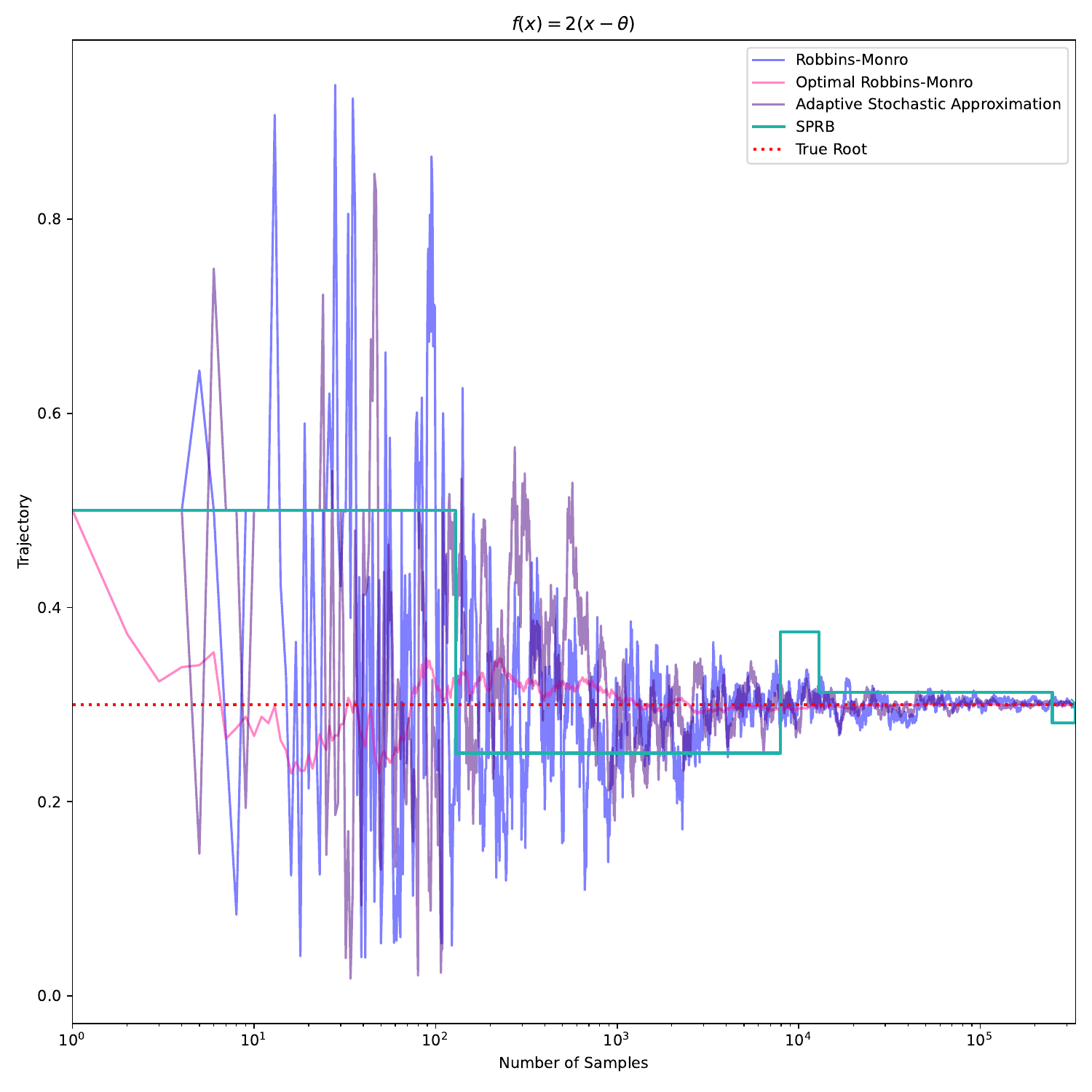}
        \label{fig:diff_zoom_out}
    \end{minipage}
    \hfill
    \begin{minipage}{0.45\linewidth}
        \centering
        \includegraphics[width=\linewidth]{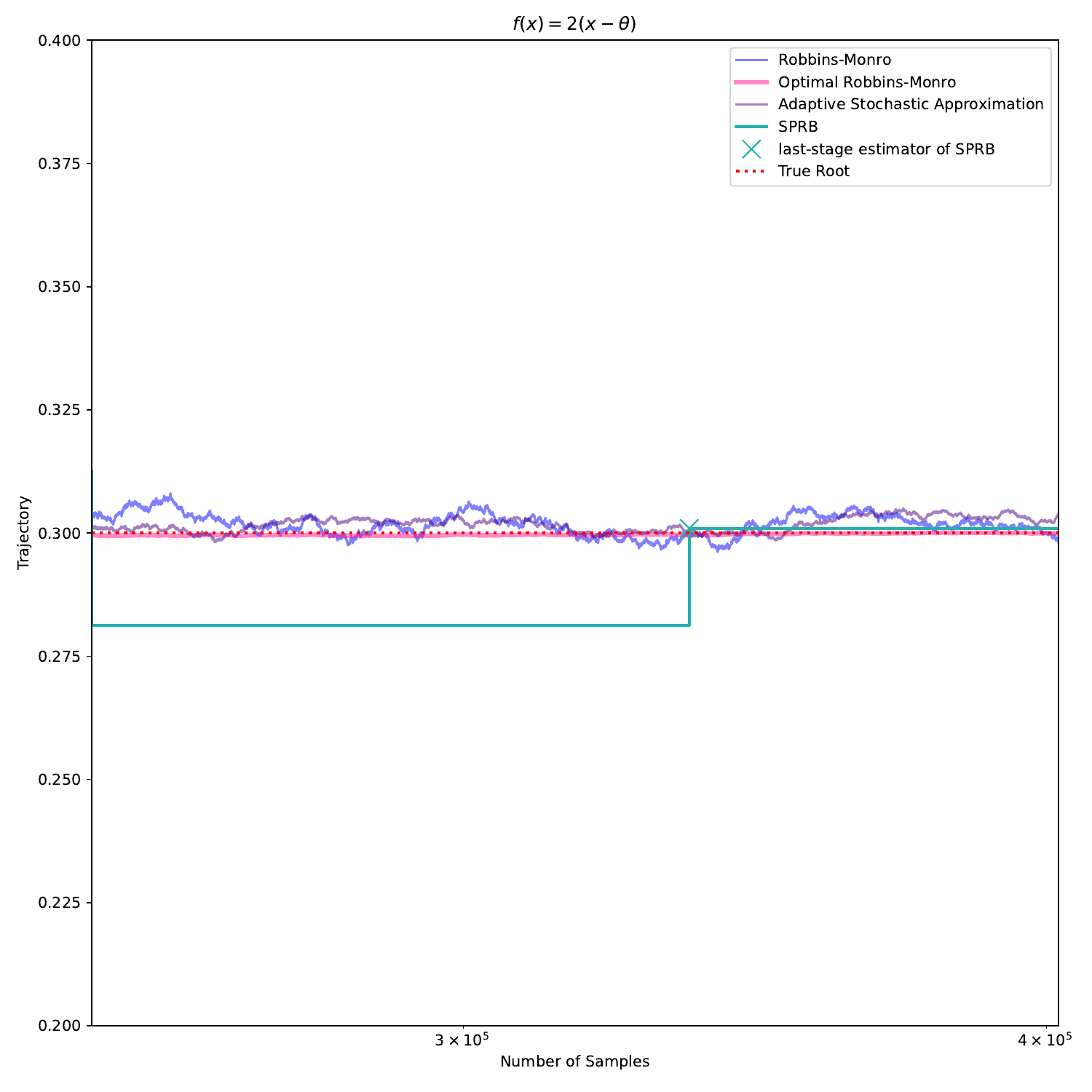}
        \label{fig:diff_zoom_in}
    \end{minipage}
    \caption{Trajectories of four root-finding procedures
           for estimating the root~$\theta$ of the linear regression function
           $f(x)=2(x-\theta)$.}
    \label{fig:diff_overall}
\end{figure}

\begin{figure}[H]
    \centering
    \begin{minipage}{0.45\linewidth}
        \centering
        \includegraphics[width=\linewidth]{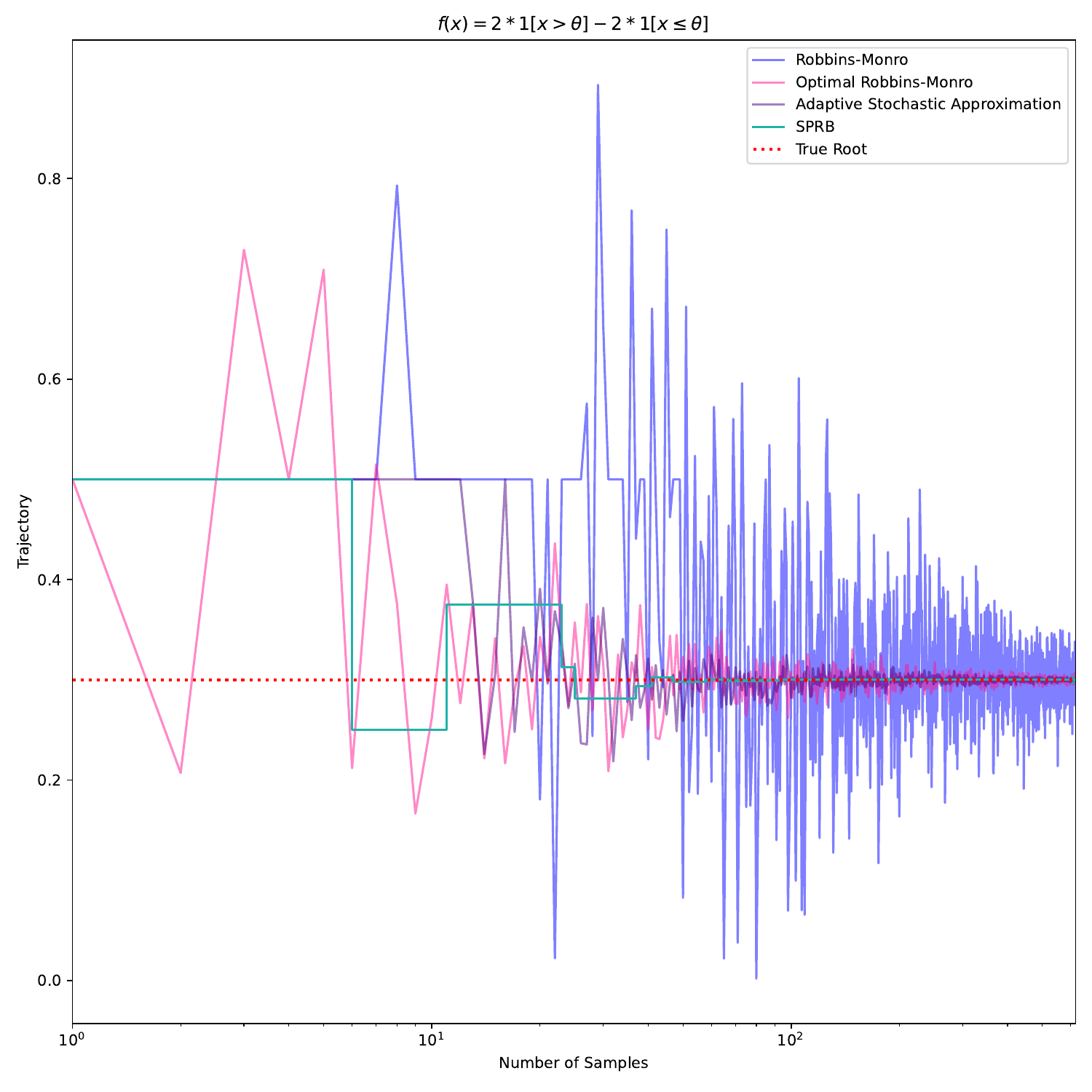}
        \label{fig:diff_zoom_out}
    \end{minipage}
    \hfill
    \begin{minipage}{0.45\linewidth}
        \centering
        \includegraphics[width=\linewidth]{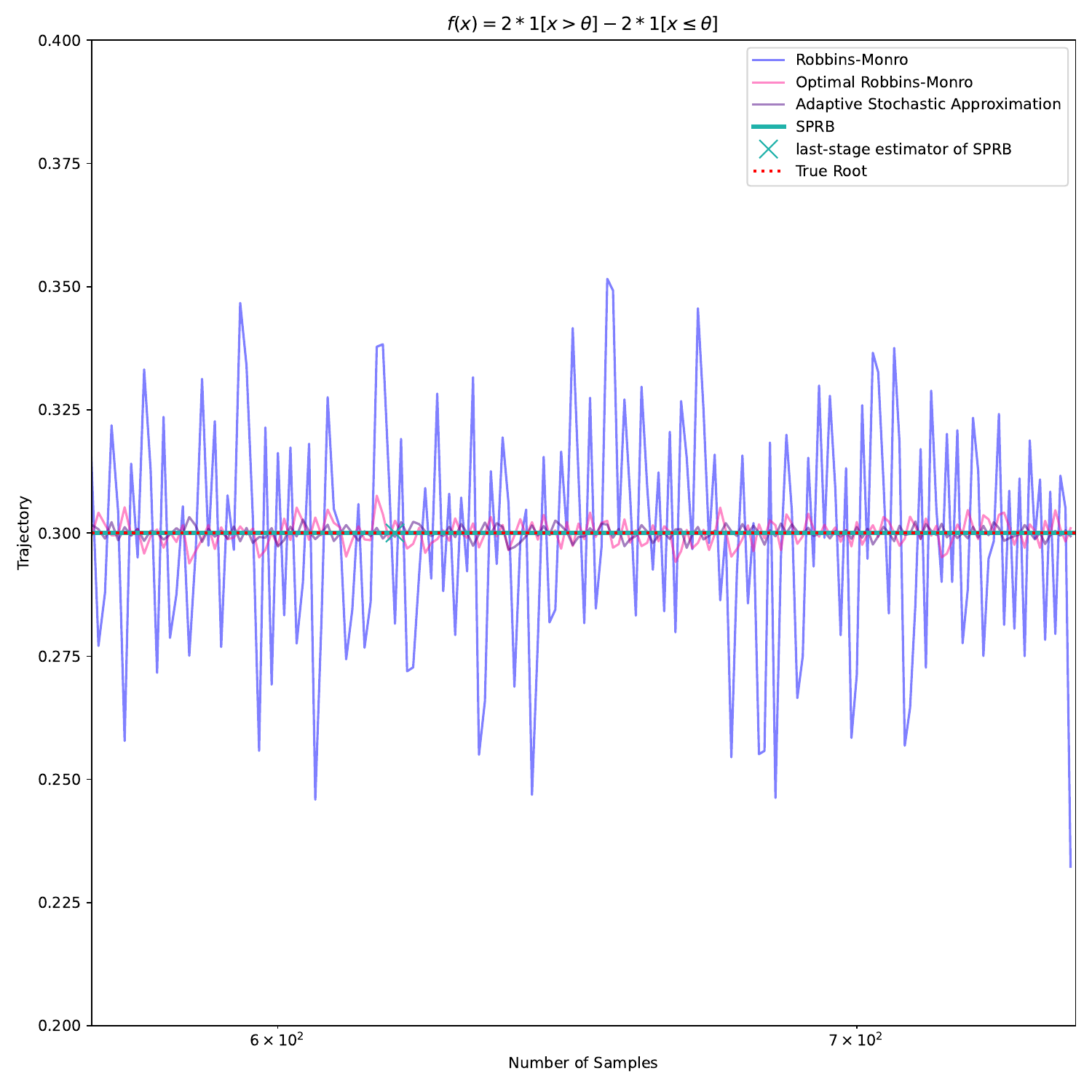}
    \end{minipage}
    \caption{Trajectories of four stochastic-approximation procedures for estimating
           the root~$\theta$ of the discontinuous regression function
           $f(x)=\mathbbm1\{x>\theta\}-\mathbbm{1}\{x\le\theta\}$.}
    \label{fig:jump_overall}
\end{figure}

Both Figure~\ref{fig:diff_overall} and Figure~\ref{fig:jump_overall} are plotted with the number of samples on a logarithmic scale.

 In the regime where the regression function has a non‐vanishing first derivative at the root, both the Robbins–Monro algorithm (with $\alpha=1$) and its adaptive variant exhibit pronounced temporal oscillations when $n$ is relatively small. These fluctuations arise from their sensitivity to step‐size choices and initialization. By contrast, \textsc{SPRB} produces iterates that generally move closer to the true root $\theta$ (indicated by the red dashed line) at each update. The terminal estimator—marked by “$\times$” in the right panel of Figure~\ref{fig:diff_overall}—yields the smallest estimation error among all methods. Only the optimal SA, which exploits knowledge of $f'(\theta)$ to set its step‐size sequence, attains comparable precision, while such information is rarely available in practice. Figure~\ref{fig:diff_overall} therefore underscores \textsc{SPRB}’s superior transient dynamics, its asymptotic efficiency, and the practical reliability of its final estimate.

 When $f$ has a jump discontinuity, the Robbins–Monro variants and adaptive SA suffer from severe, persistent oscillations driven by the discontinuity. In contrast, \textsc{SPRB} rapidly converges to $\theta$ and thereafter remains stable, exhibiting negligible oscillation around the true root. This behavior is in precise agreement with the exponential‐rate convergence established in Theorem~\ref{thm:jump point}. Together, Figure~\ref{fig:diff_overall} and~\ref{fig:jump_overall} demonstrates \textsc{SPRB}’s adaptivity across both smooth and nonsmooth settings, highlighting its superior finite‐sample and asymptotic performance.

\section{Concluding discussion}
\label{section:discussion}
In this work, we have proposed the the \text{\textsc{SPRB}} algorithm, an adaptive root-finding algorithm under various local conditions. Our analysis demonstrates that \text{\textsc{SPRB}} not only achieves optimal convergence rates and minimal asymptotic variance in regimes where the derivative at the root is small but also outperforms classical methods such as Robbins-Monro's procedure and its variants by attaining a faster rate of convergence when the regression function exhibits a discontinuity or higher-order non-vanishing derivative at the root. We list multiple extension directions for future work next.

\subsection{Resampling techniques} From a methodological perspective, and to the best of our knowledge, developing an adaptive root-finding procedure that incorporates bootstrap resampling for inference remains a significant and unresolved research challenge despite recent progress on statistical inference for SA \citep{su_higrad_2023,lee_fast_2022,fang_online_2018}. Our hunch is that bootstrap methods are compatible with our current \text{\textsc{SPRB}} algorithm due to the i.i.d. sampling structure of our \textsc{Stagesampling} scheme and the asymptotic normality result (Theorem~\ref{thm:RSA}) of randomly stopped averages. This is therefore a potential avenue for future research in the domain of sequential testing, where improving statistical power while preserving anytime-validity continues to pose a substantial challenge. 

\subsection{Multivariate root-finding problem} 
The present paper develops asymptotic theory for the \textsc{SPRB} in the classical one–dimensional root–finding problem. The obvious sequel to the present one–dimensional investigation is a principled extension of the \textsc{SPRB} paradigm to multivariate stochastic approximation.  In higher dimensions one still aims to locate the root of an unknown vector field observed only through stochastic queries, yet the total ordering that renders bracketing unambiguous on the real line is no longer available.  The resulting programme entails several non-trivial statistical and algorithmic issues: aggregating noisy, potentially conflicting sign information spread across exponentially many orthants; guaranteeing that each geometric contraction step retains the true solution set; and controlling the propagation of sequential testing error in a setting where both the number and the orientation of cuts are data-dependent.  This, therefore, constistutes another possible future extension.

\appendix

\section{Auxiliary results}

In this section, we prove all the auxiliary results needed in the proofs of the results stated in Section~\ref{section:asymptotics}. We begin by establishing a non-asymptotic version of \citep{lai_power-one_1977}, 
which holds for each \( t\in \mathbb{N}^{+} \), characterizing the order of \( N_{t} \) under our chosen boundary criterion function
\[
T(j, \alpha_t) = \sqrt{ -2j \log(j + 1) \log \alpha_{t} },
\quad \alpha_t = \alpha 2^{-t},
\] 
where, compared to~\eqref{eq:boundary}, we assume $\sigma = 1$ for simplicity while the extension to general noise with varinace $\sigma^2$ is straightforward.
Specifically, for a fixed stage \( t\leq k\in \mathbb{N}^{+} \), our algorithm \textsc{StageSampling} persistently queries location \( X_{t} \) and observes a sequence of conditionally independent and identically distributed random variables \( Y_{it} =\mu_t +\varepsilon_{it} \), where \( \mu_{t} = f(X_{t}) \), until the stopping time defined in equation \eqref{eq:criterion} is reached. As demonstrated in Proposition \ref{thm:EN} below, the expected sample size satisfies
$
\mathbb{E}N_{t} \asymp \frac{ -2 \log \alpha_{t} }{ \mu_{t}^{2} } \log\left( \frac{ -2 \log \alpha_{t} }{ \mu_{t}^{2} } \right)$. This result plays a crucial role in the proof of the randomly stopped Central Limit Theorem (Theorem \ref{thm:RSA}).

\label{section: auxiliary proof}

\begin{lemma}
\label{lemma:numeric}
There exists a universal constant $c\in (0,1)$ such that $1\leq n\leq t-1$
\begin{align}
    \frac{t-n}{\sqrt{t\log (t+1)}-\sqrt{n\log (n+1)}} - \sqrt{\frac{n}{\log (n+1)}}> c\sqrt{\frac{t}{\log (t+1)}},
\end{align}
Furthermore, there exists a sequence $\{c_t\}$ satisfying the inequality above such that $c\rightarrow 1$ when $t\rightarrow \infty$.
\end{lemma}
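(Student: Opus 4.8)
The plan is to collapse the left-hand side into a single ratio and then reduce the claim to controlling one scalar quantity. Write $\phi(x)=\sqrt{x\log(x+1)}$, and observe the identities $\sqrt{n/\log(n+1)}=n/\phi(n)$ and $\sqrt{t/\log(t+1)}=t/\phi(t)$, so that the subtracted term and the target are $n/\phi(n)$ and $c\,t/\phi(t)$. Putting the two terms on the left over a common denominator gives
\[
\frac{t-n}{\phi(t)-\phi(n)}-\frac{n}{\phi(n)}=\frac{t\phi(n)-n\phi(t)}{\phi(n)\bigl(\phi(t)-\phi(n)\bigr)}.
\]
Two monotonicity facts drive the reduction: $\phi$ is strictly increasing on $[1,\infty)$ (since $(\phi^2)'=\log(x+1)+x/(x+1)>0$), and $g(x):=\log(x+1)/x$ is strictly decreasing on $[1,\infty)$ (because $\log(x+1)-x/(x+1)>0$ there, by a one-line monotone argument). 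The decrease of $g$ yields $t\log(n+1)>n\log(t+1)$ for $n<t$, which both makes the numerator $t\phi(n)-n\phi(t)$ positive and is exactly the $c=0$ case of the inequality. After clearing the positive factors $\phi(n)\bigl(\phi(t)-\phi(n)\bigr)$ and $\phi(t)$, the assertion becomes equivalent to
\[
F_c(n,t):=(1-c)\sqrt{\tfrac{t}{n}\log(t+1)\log(n+1)}-\log(t+1)+c\log(n+1)>0 .
\]

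Next I would pin down the optimal constant. Setting $P=\sqrt{\tfrac{t}{n}\log(t+1)\log(n+1)}$, $u=\log(t+1)$, $v=\log(n+1)$, and $q=P/u=\sqrt{sw}$ with $s=t/n\ge1$ and $w=v/u\in(0,1)$, one checks that $F_c$ is strictly decreasing in $c$ and vanishes at
\[
c^\ast(n,t)=\frac{P-u}{P-v}=\frac{q-1}{q-w}\in(0,1),\qquad\text{so that}\qquad\frac{c^\ast}{1-c^\ast}=\frac{q-1}{1-w}=:\Phi(n,t).
\]
Thus the inequality holds at $(n,t)$ iff $c<c^\ast(n,t)$, and the whole statement reduces to two claims about $\Phi$: the universal bound requires $\inf_{t}\min_{1\le n\le t-1}c^\ast(n,t)>0$, while the \emph{furthermore} clause requires $\min_{n}c^\ast(n,t)\to1$, i.e.\ $\min_n\Phi(n,t)\to\infty$, as $t\to\infty$ (one then takes, say, $c_t=(1-1/t)\min_n c^\ast(n,t)$ to keep the inequality strict).

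The analytic heart is the lower bound on $\Phi$. Using $sw=g(n)/g(t)>1$ together with the elementary bound $\sqrt{x}-1\ge\tfrac12\log x$ for $x\ge1$, I would estimate $q-1\ge\tfrac12\log\!\bigl(g(n)/g(t)\bigr)$ and compare it against the denominator $1-w=\log\tfrac{t+1}{n+1}\big/\log(t+1)$. For $n\le t^{a}$ with any fixed $a<1$, $q=\sqrt{sw}$ already grows polynomially in $t$, so $\Phi\to\infty$ trivially; the only delicate regime is $n\asymp t$, where $q-1\to0$ and $1-w\to0$ simultaneously. A second-order expansion at $n=t-m$ (fixed $m$) shows $q-1\sim\tfrac{m}{2t}$ and $1-w\sim\tfrac{m}{t\log t}$, whence $\Phi\sim\tfrac12\log t$ independently of $m$, identifying the near-diagonal band as the binding case and giving $\min_n\Phi(n,t)\gtrsim\tfrac12\log t\to\infty$. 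This proves $c_t\to1$; the universal constant then follows for free, since $\min_n\Phi(\cdot,t)>0$ for every $t$ (as $q>1$, $w<1$) and exceeds a divergent quantity for all large $t$, so the sequence $\{\min_n c^\ast(\cdot,t)\}_t$ is bounded below by a strictly positive number, and any $c$ below that number works.

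I expect the main obstacle to be the \emph{uniform} control of $\Phi$ along the diagonal $n\approx t$. Away from it the lower bound is crude and immediate, but there both the numerator $q-1$ and the denominator $1-w$ degenerate, and one must show that the numerator never degenerates faster than the denominator, uniformly in how close $n$ sits to $t$ and across all large $t$. Upgrading the $n=t-m$ leading-order heuristic to a genuine uniform estimate (rather than a pointwise asymptotic) is the technical crux; everything else is bookkeeping built on the monotonicity of $\phi$ and $g$.
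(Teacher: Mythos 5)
Your proposal is correct, but it takes a genuinely different route from the paper. The paper's proof is a short mean-value argument: writing $G(x)=x\log(x+1)$ and rationalizing the difference quotient,
\[
\frac{t-n}{\sqrt{t\log(t+1)}-\sqrt{n\log(n+1)}}
=\frac{(t-n)\left(\sqrt{t\log(t+1)}+\sqrt{n\log(n+1)}\right)}{G(t)-G(n)}
\ge\frac{\sqrt{t\log(t+1)}+\sqrt{n\log(n+1)}}{\log(t+1)+1},
\]
since $G'(s)\le\log(t+1)+1$ on $[n,t]$, after which the remaining comparison against $\sqrt{n/\log(n+1)}+c_t\sqrt{t/\log(t+1)}$ is simply asserted as obvious. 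You instead clear denominators exactly, identify the sharp threshold $c^\ast(n,t)=(P-u)/(P-v)$, and reduce the whole lemma to the divergence of $\Phi=(q-1)/(1-w)$. This is heavier algebra but buys more than the paper's proof: the exact critical constant at each $(n,t)$, the identification of the near-diagonal band $n\asymp t$ as the binding regime, and the quantitative rate $c_t=1-\mathcal{O}(1/\log t)$ — none of which the paper makes explicit; indeed the paper's final ``it is obvious'' step conceals precisely the near-diagonal trade-off you isolate (there the gain $\log(t+1)/(\log(t+1)+1)$ and the deficit from the $\sqrt{n\log(n+1)}$ term both degenerate at rate $1/\log t$). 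One substantive remark: the step you flag as the technical crux is not actually a gap, because your own ingredients already close it uniformly. Combining $\sqrt{x}-1\ge\frac12\log x$ with $\log\frac{t}{n}\ge\log\frac{t+1}{n+1}$ and $\log\frac{\log(t+1)}{\log(n+1)}\le\frac{1}{\log(n+1)}\log\frac{t+1}{n+1}$ (the latter from $\log y\le y-1$) yields, for all $2\le n\le t-1$,
\[
\Phi(n,t)\;\ge\;\frac{\log(t+1)}{2}\left(1-\frac{1}{\log(n+1)}\right),
\]
while for $n=1$ one has directly $\Phi\ge q-1\ge\sqrt{t\log 2/\log(t+1)}-1$; both lower bounds diverge as $t\to\infty$, so $\min_{n}c^\ast(n,t)\to1$, and positivity of $c^\ast$ at each of the finitely many pairs with small $t$ supplies the universal constant. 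So your plan, executed with the inequalities you already state, is a complete and in fact more informative proof than the one in the paper.
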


\begin{proof}
    We define $G(x) = x\log(x+1)$ for $x\geq 0$. 
    Thus we have $\max_{s\in [t,n]}G'(s) \leq \log(t+1) + 1$, yielding that
    \begin{align*}
         \frac{t-n}{\sqrt{t\log (t+1)}-\sqrt{n\log (n+1)}}  
         & = 
         \frac{(t-n)(\sqrt{t\log (t+1)}+\sqrt{n\log (n+1)})}{G(t)-G(n)}\\
         & = 
         \frac{(t-n)(\sqrt{t\log (t+1)}+\sqrt{n\log (n+1)})}{\int_n^tG'(s)\d s}\\
         & \geq 
          \frac{\sqrt{t\log (t+1)}+\sqrt{n\log (n+1)}}{\max_{s\in [t,n]}G'(s)}\\
        & = 
        \frac{\sqrt{t\log (t+1)}+\sqrt{n\log (n+1)}}{\log(t+1)+1}.
    \end{align*}
Therefore, it is obvious that
\begin{align*}
    \frac{\sqrt{t\log (t+1)}+\sqrt{n\log (n+1)}}{\log(t+1)+1} > \sqrt{\frac{n}{\log(n+1)}} + c_t\sqrt{\frac{t}{\log(t+1)}},
\end{align*}
when $1\leq n\leq t-1$ and $\{c_t\}_{t\in \bbN^+}$ approaching to $1$.
\end{proof}
We restate Proposition~\ref{thm:EN} for completeness. 
\begin{proposition}[Proposition~\ref{thm:EN}]
There exist a universal constant $K\in \mathbb N^+$ and two sequences of constants $\{c_{\mu_t}\},\{C_{\mu_t}\}\subseteq \bbR^+$ such that $\delta_t := \delta(\mu_t,\alpha_t) = -2\sigma^2\mu_t^{-2}\log \alpha_t>0$ satisfies
\begin{align}
    c_{\mu_t}\delta_t\log (\delta_t+1)< \bbE N_t< C_{\mu_t}  \delta_t\log (\delta_t+1),\quad\forall t\geq K.
\end{align}
Furthermore, if $\mu_t\rightarrow 0$ almost surely as $t\rightarrow\infty$, the sequences can be chosen such that $ c_{\mu_t},C_{\mu_t} \rightarrow 1$. As a consequence, 
\begin{align}
    \lim\limits_{t\rightarrow \infty} \frac{\bbE N_t}{\delta_t \log(\delta_t + 1)} = 1.
\end{align}
\end{proposition}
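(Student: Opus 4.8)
The plan is to bracket the random stopping time $N_t$ by the deterministic level $N'_t$ at which the mean path meets the boundary, and then to reconcile $N'_t$ with $\delta_t\log(\delta_t+1)$. Taking $\sigma=1$ as in this section and writing $a_t := -\log\alpha_t$, the boundary is $T(j) = \sqrt{2a_t\,j\log(j+1)}$, $\delta_t = 2a_t/\mu_t^2$, and by symmetry I assume $\mu_t>0$. Let $N'_t$ be the unique solution of $T(N'_t) = \mu_t N'_t$, equivalently $N'_t = \delta_t\log(N'_t+1)$ (the quantity in Theorem~\ref{thm:RSA}). Since $\alpha_t = \alpha 2^{-t}$ forces $a_t\to\infty$, we have $\delta_t\geq 2a_t/B^2\to\infty$ for every bounded $\mu_t$, and a short calculus argument shows $N'_t \asymp \delta_t\log\delta_t$, whence $\log(N'_t+1)/\log(\delta_t+1)\to 1$ and therefore $N'_t / [\delta_t\log(\delta_t+1)]\to 1$. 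It thus suffices to sandwich $\mathbb{E}N_t$ between $(1-\epsilon)N'_t$ and $(1+\epsilon)N'_t$ up to lower-order terms, with $\epsilon$ permitted to shrink as $t\to\infty$.

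For the upper bound I would take $n^+ = (1+\epsilon)N'_t$. Because the signal overtakes the boundary at $N'_t$, the margin at $n^+$ satisfies $\mu_t n^+ - T(n^+) \gtrsim \epsilon\sqrt{2a_t}\,\sqrt{\delta_t}\log(\delta_t+1)$, which dominates the noise scale $\sqrt{n^+}\asymp\sqrt{\delta_t\log\delta_t}$ by a factor $\to\infty$. Using $\mathbb{E}N_t = \sum_{j\geq 0}\mathbb{P}(N_t>j)$ together with $\{N_t>j\}\subseteq\{|S_j|\le T(j)\}$ and the sub-Gaussian tail, for $j\ge n^+$ one gets $\mathbb{P}(N_t>j)\le \exp\!\bigl(-(\mu_t j - T(j))^2/(2j)\bigr)$; at $j=n^+$ this is already of order $(\delta_t+1)^{-c\epsilon^2 a_t}$, and the terms decay thereafter, so the tail sum is $o(N'_t)$. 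Hence $\mathbb{E}N_t \le (1+\epsilon)N'_t(1+o(1))$, giving $C_{\mu_t}$ with $C_{\mu_t}\to 1$.

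For the lower bound I would take $n^- = (1-\epsilon)N'_t$ and exploit that $\mu_t j/T(j) = \sqrt{j/(\delta_t\log(j+1))}$ is increasing in $j$, so that over $j\le n^-$ it is maximized at $n^-$, where it equals $\sqrt{1-\epsilon}\,(1+o(1))$. Consequently the margin obeys $T(j)-\mu_t j \ge \rho\,T(j)$ with $\rho = 1-\sqrt{1-\epsilon}>0$ uniformly on $j\le n^-$, and the event $\{N_t\le n^-\}$ forces the centered walk $W_j = S_j-\mu_t j$ to cross the reduced power-one boundary $\rho\,T(j) = \sqrt{2(\rho^2 a_t)\,j\log(j+1)}$. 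This is controlled by the same maximal estimate that bounds the false-sign probability, now with parameter $\rho^2 a_t$, yielding $\mathbb{P}(N_t\le n^-)\lesssim e^{-\rho^2 a_t}$. Choosing $\epsilon=\epsilon_t\to 0$ slowly (e.g.\ $\epsilon_t\asymp a_t^{-1/4}$, so $\rho_t^2 a_t\to\infty$) gives $\mathbb{P}(N_t>n^-)\to 1$ and $\mathbb{E}N_t\ge n^-\mathbb{P}(N_t>n^-) = (1-o(1))N'_t$, so $c_{\mu_t}\to 1$; the refined constant $c_t\to 1$ of Lemma~\ref{lemma:numeric} is exactly what lets the comparison threshold be pushed to the sharp value as $\epsilon\to 0$.

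The main obstacle is the uniform (maximal) control of the noise against the curved boundary in the lower bound: a fixed-time sub-Gaussian bound only handles a single $j$, whereas $\{N_t>n^-\}$ requires non-crossing at all $j\le n^-$ simultaneously. This is precisely the power-one / law-of-the-iterated-logarithm input — establishing $\mathbb{P}\bigl(\exists j:\, W_j > \sqrt{2b\,j\log(j+1)}\bigr)\lesssim e^{-b}$, whose proof (via a peeling of $[1,n^-]$ into geometric blocks with Doob's inequality on each, or directly from the power-one construction underlying the false-sign bound) is the technical crux. Lemma~\ref{lemma:numeric} supplies the deterministic geometry: it pins down $N'_t\asymp\delta_t\log(\delta_t+1)$ and quantifies the signal–boundary gap, and its $c_t\to 1$ refinement delivers the sharp constants. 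Combining the two sandwich bounds and invoking $N'_t/[\delta_t\log(\delta_t+1)]\to 1$ then completes the argument.
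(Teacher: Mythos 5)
Your proposal is correct in substance. Your upper bound coincides with the paper's own: both exploit \(\{N_t>j\}\subseteq\{|S_j|\le T(j,\alpha_t)\}\), a fixed-time sub-Gaussian tail bound, and a summation of tail probabilities (the paper takes \(\lambda\propto\delta_t\log\delta_t\cdot\Lambda\) and sums over \(\Lambda\in\mathbb{N}\) rather than tuning a shrinking \(\epsilon_t\), which costs it an explicit \(18M^2\)-type constant that it then argues away in the \(\mu_t\to0\) regime). Your lower bound, however, follows a genuinely different route. The paper controls \(\mathbb{P}\bigl(\max_{1\le n\le\lambda}M_n\ge\sqrt{-2\log\alpha_t}\bigr)\) for the \emph{uncentered} normalized sums \(M_n=S_n/\sqrt{n\log(n+1)}\) directly, via a hand-built refinement of Doob's inequality: a union bound, a split on the event \(\{\mathbb{E}(M_\lambda\mid\mathcal{F}_n)<M_n\}\), Cauchy--Schwarz, and the deterministic comparison of Lemma~\ref{lemma:numeric}, whose \(c_t\to1\) refinement is where its sharp constants come from. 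You instead center the walk \(W_j=S_j-\mu_t j\) and observe that for \(j\le n^-=(1-\epsilon)N_t'\) the signal consumes at most a \(\sqrt{1-\epsilon}\,(1+o(1))\)-fraction of the boundary (your monotonicity claim is right, since \(j/\log(j+1)\) is increasing), so early stopping forces a crossing of the shrunken power-one boundary \(\rho\,T(j,\alpha_t)\); the required time-uniform bound \(\mathbb{P}\bigl(\exists j:\,W_j>\sqrt{2b\,j\log(j+1)}\bigr)\lesssim e^{-cb}\) is precisely the nonasymptotic LIL input the paper itself invokes for Lemma~\ref{lemma:false sign} \citep{schreuder_nonasymptotic_2020,howard_time-uniform_2021}, or follows from geometric peeling plus Doob as you indicate, so the ``technical crux'' you flag is not a real gap. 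Your route is more modular (it reuses the same concentration tool twice) and delivers the sharp \((1\pm o(1))N_t'\) sandwich cleanly --- indeed for any bounded \(\mu_t\), since \(\delta_t\gtrsim t\to\infty\) regardless, which is slightly more than the proposition asserts; the paper's route avoids \(\epsilon_t\)-tuning and yields explicit constant sequences \(c_{\mu_t},C_{\mu_t}\). Two small repairs: the event \(\{|S_j|>T(j,\alpha_t)\}\) also contains the downward crossing \(-W_j>T(j,\alpha_t)+\mu_t j\), which must be bounded separately (with parameter \(a_t\) rather than \(\rho^2a_t\), so it is harmless); and your closing attribution to Lemma~\ref{lemma:numeric} is misplaced within your own argument --- your monotonicity observation replaces it, and that lemma is only needed on the paper's path.
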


\begin{proof}[Proof of Proposition \ref{thm:EN}]
In the sequel, we focus on the regime $\mu_t>0$ with $\mu_t\to0^+$; the case $\mu_t<0$ and the general situation $\mu_t\to0$ as $t\to\infty$ follow by analogous arguments.  For notational brevity, we omit the subscript $t$ when no confusion arises.  If $\mu_t$ is random, the argument is made rigorous by conditioning on the filtration 
$
\mathcal{F}_t=\sigma\bigl(\{Y_{i t'}: i\ge1,\;1\le t'\le t\}\bigr)$.
We split the proof into separate upper and lower bound analyses.

\textit{Upper bound.}    
The upper bound $\bbE N \lesssim\frac{-2\log \alpha}{\mu^2}\log\left(\frac{-2\log \alpha}{\mu^2}\right)$ is easy to derive following from tail probability bound. The right tail is bounded as follows: for any $\lambda\in \bbN^+$,
\begin{align*}
    \bbP(N>\lambda) 
    & = 
    \bbP\biggl(\max\limits_{1\leq n\leq \lambda} \frac{|S_n|}{\sqrt{n\log (n+1)}}<\sqrt{-2\log \alpha_k}\biggr)\\
    & \leq 
    \bbP\biggl(\frac{|S_\lambda|}{\sqrt{\lambda\log (\lambda+1)}}<\sqrt{-2\log \alpha_k}\biggr)\\
    & \leq 
    \bbP\biggl(\sum\limits_{i=1}^\lambda \varepsilon_i\leq -\lambda\mu + \sqrt{-2\lambda\log (\lambda+1) \log\alpha_k}\biggr)\\
    & \leq \exp\left(-\frac{1}{2M^2}\left(-\mu\sqrt{\lambda} + \sqrt{-2\log (\lambda+1) \log\alpha_k}\right)^2\right).\quad\left[\text{if}\quad \frac{\lambda}{\log(\lambda+1)}>\frac{-2\log \alpha_k}{\mu^2}\right]
\end{align*}

We set $\lambda = \frac{-18M^2\log \alpha_k}{\mu^2}\log\left(\frac{-2\log \alpha_k}{\mu^2}\right)\Lambda$, yielding that 
\begin{align*}
    &\exp\left(-\frac{1}{2M^2}\left(-\mu\sqrt{\lambda} + \sqrt{-2\log (\lambda+1) \log\alpha_k}\right)^2\right)\\
    = &  
    \exp\left(-\frac{1}{2M^2}(-\sqrt{-18M^2\log \alpha_k\log\left(\frac{-2\log \alpha_k}{\mu^2}\right)\Lambda}\right.\\
    +& \left.\sqrt{-2\log \alpha_k\cdot \log\left(\frac{-18M^2\log \alpha_k}{\mu^2}\log\left(-\frac{-2\log \alpha_k}{\mu^2}\right)\Lambda\right)})^2\right)\\
     = & 
    \exp\left(-\frac{1}{2}\Lambda(-\sqrt{-2\log \alpha_k\log\left(\frac{-2\log \alpha_k}{\mu^2}\right)}\right.\\
   -&\left. 2\sqrt{-\frac{2
   }{\sigma^2}\log \alpha_k\log\left(\frac{-2\log \alpha_k}{\mu^2}\right)}   + \sqrt{-\frac{2}{\sigma^2}\log \alpha_k\cdot \log\left(\frac{-18\log \alpha_k}{\mu^2}\log\left(-\frac{-2\log \alpha_k}{\mu^2}\right)\right)})^2\right)\\
     \leq &  \exp\left(\Lambda\log \alpha_k\log\left(\frac{-2\log \alpha_k}{\mu^2}\right)\right).
\end{align*}

We do not mean to seek the optimal choice of 
$\lambda$ and the constant $18$ is chosen only for the convenience of the derivation. The last inequality is due to the fact that $s^3>9\log s$ holds for any $s>0$. Then we obtain 
\begin{align*}
    \bbE \left(\frac{N}{\frac{-18\sigma^2\log \alpha_k}{\mu^2}\log\left(\frac{-2\log \alpha_k}{\mu^2}\right)}\right)  & \leq 
    \sum\limits_{\Lambda=0}^\infty\bbP\left(\frac{N}{\frac{-18\sigma^2\log \alpha_k}{\mu^2}\log\left(\frac{-2\log \alpha_k}{\mu^2}\right)}>\Lambda\right)\\
    & \leq 1 + \frac{2}{\exp\left(-\log \alpha_k\log\left(\frac{-2\log \alpha_k}{\mu^2}\right)\right)-1}\\
    & \leq C,
\end{align*}
for some constant $C$ not depending $\mu$ and $k$. The last step holds because 
\begin{align*}
-\log \alpha_k\log\Bigl(\frac{-2\log \alpha_k}{\mu^2}\Bigr)\gg k,
\end{align*}
under the assumption 
$
    |f(x)|\leq C_1|x| + C_2$
for some $C_1,C_2>0$ and all $x\in \mathcal{I}$. Another note is that if $\mu_k\rightarrow 0$, the constant $C$ vanishes. Thus, we recover one side in \citep[Theorem 2]{lai_power-one_1977}.

%\begin{remark}
%Although the final nonasymptotic upper bound has an extra $C$ and replaces the original $\log(\mu^{-2})$ with $\log\left(\frac{-2\log \alpha_k}{\mu^2}\right)$, the last step 
%\begin{align*}
%    -\frac{\log \alpha_k}{\mu^2}\log\left(\frac{-2\log \alpha_k}{\mu^2}\right) \asymp -\frac{\log \alpha_k}{\mu^2}\log(\mu^{-2}),
%\end{align*}
%as $\mu\rightarrow 0$ for fixed $k$ and 
%\begin{align*}
%    \lim\limits_{\mu\rightarrow 0}\sum\limits_{T=0}^\infty \bbP\left(\frac{N}{\frac{-18\log \alpha_k}{\mu^2}\log\left(\frac{-2\log \alpha_k}{\mu^2}\right)}>T\right)\rightarrow 1,
%\end{align*}
%by the exchangeability of limit and summation.
%\end{remark}
\quad\\
\textit{Lower Bound.} The lower bound is proved via a finer version of Doob's maximal inequality. Let $M_n = \frac{S_n}{\sqrt{n\log (n+1)}}$, where $S_n$ is a sum of i.i.d. mean $\mu$ and variance $\sigma^2$ subgaussian random variables.

For $\lambda \in \mathbb N^+$ and $C>0$ , let $E_n = \{M_n\geq C\}$. We obtain that 
\begin{align}
\notag
   & \bbP\Bigl(\max\limits_{1\leq n\leq \lambda}M_n\geq C\Bigr)
     \leq  
    \sum\limits_{n=1}^\lambda\bbP\left(M_n\geq C\right)\leq \frac{1}{C} \sum\limits_{n=1}^\lambda \bbE\left(M_n\mathbbm{1}_{E_n}\right)\\
    \notag
   = & \frac{1}{C} \sum\limits_{n=1}^\lambda \biggl(\bbE\left(M_n\mathbbm{1}\left\{\bbE(M_\lambda|\mathcal{F}_n)\geq M_n\right\}\cdot 
    \mathbbm{1}_{E_n}\right) + 
    \bbE\left(M_n\mathbbm{1}\left\{\bbE(M_\lambda|\mathcal{F}_n)< M_n\right\}\right)\cdot \mathbbm{1}_{E_n}\biggr)\\
    \leq &  \frac{1}{C} \bbE\left(M_\lambda\vee 0 \right)
    +
    \frac{1}{C}\sum\limits_{n=1}^{\lambda-1} \sqrt{\bbE M_n^2}\sqrt{\bbP\left(\bbE(M_\lambda|\mathcal{F}_n)<M_n\right)}
    \label{eq:lower bound 1},
\end{align}
where the second term, by convention, is $0$ if $\lambda = 1$.

By straightforward calculation, we obtain that
\begin{align}
\label{eq:lower bound 2}
    \bbE M_n^2 =  \frac{\mu^2 n +1}{\log (n+1)}.
\end{align}
By definition of $M_n$, we have the following recursive formula:
\begin{align*}
   \bbE\left(M_{n+1}|\mathcal{F}_n\right) = 
   M_n\cdot \sqrt{\frac{n \log (n+1)}{(n+1)\log (n+2)}}+ \frac{\mu}{\sqrt{(n+1)\log (n+2)}},
\end{align*}
yielding that  
\begin{align*}
    \bbE\left(M_\lambda|\mathcal{F}_{n}\right)
    = M_n\cdot \sqrt{\frac{n\log (n+1)}{\lambda\log (\lambda+1)}} + 
    \frac{\mu(\lambda-n)}{\sqrt{\lambda\log (\lambda+1)}}.
\end{align*}
Given the preceding Lemma \ref{lemma:numeric}, we bound the probability of $\Big\{\bbE(M_\lambda\mid \mathcal{F}_n)<M_n\Big\}$ as follows.
\begin{align}
   \notag&\bbP\left(\bbE(M_\lambda|\mathcal{F}_n)<M_n\right)
    = 
    \bbP\left(\sqrt{n\log (n+1)}M_n + \mu(\lambda-n)<\sqrt{\lambda\log (\lambda+1)}M_n\right)\\
    \notag
    =& \bbP\left(\frac{1}{\sqrt{n\log (n+1)}}\sum\limits_{i=1}^n \varepsilon_i + \mu\sqrt{\frac{n}{\log (n+1)}}>\frac{\mu(\lambda-n)}{\sqrt{\lambda\log (\lambda+1)}-\sqrt{n\log (n+1)}}\right)\\
    \notag
    \leq & \bbP\left(\frac{1}{\sqrt{n\log 
    (n+1)}}\sum\limits_{i=1}^n \varepsilon_i >c\mu \sqrt{\frac{\lambda}{\log (\lambda+1)}}\right)\tag{$\because\text{Lemma \ref{lemma:numeric}}$}\\
    \label{eq: lower bound 3}
    \leq & \frac{1}{c\mu\sqrt{\log(n+1)}}\sqrt{\frac{\log(\lambda+1)}{\lambda}}\exp\left(-c^2\mu^2\frac{\lambda\log (n+1)}{\log (\lambda+1)}\right).
\end{align}
Then we bound the following finite sum:
\begin{align}
\notag
  & \frac{1}{\sqrt{c\mu}}\left(\frac{\log (\lambda+1)}{\lambda}\right)^{1/4}  \sum\limits_{n=1}^{\lambda-1} \sqrt{\mu^2 n + 1}
      (\log(n+1))^{-3/4}\exp\left(-\frac{c^2\mu^2 \lambda\log (n+1)}{\log (\lambda+1)}\right)\\
      \notag
   &   \leq 
       \sqrt{\frac{\mu}{c}}\left(\frac{\log (\lambda+1)}{\lambda}\right)^{1/4}  \sum\limits_{n=1}^{\lambda-1} (n+1)^{\frac{1}{2}-\frac{c^2\mu^2 \lambda}{\log (\lambda+1)}}
       \\
       \notag
       & + 
         \frac{1}{\sqrt{c\mu}}\left(\frac{\log (\lambda+1)}{\lambda}\right)^{1/4}  \sum\limits_{n=1}^{\lambda-1} 
         \left(\log(n+1)\right)^{-3/4}(n+1)^{-\frac{c^2\mu^2 \lambda}{\log (\lambda+1)}}
      \\
      \label{eq: lower bound 4}
    &  \leq \sqrt{\frac{\mu}{c}}\left(\frac{\log (\lambda+1)}{\lambda}\right)^{1/4}  \frac{2\log(\lambda+1)}{2c^2\mu^2 \lambda - 3\log(\lambda+1)}
      + 
      \frac{1}{\sqrt{c\mu}}\left(\frac{\log (\lambda+1)}{\lambda}\right)^{1/4}
      \frac{\log(\lambda+1)}{2c^2\mu^2 \lambda - \log(\lambda+1)}
\end{align}
The last step holds if
\begin{align}
\label{eq:check}
    c^2\mu^2 \frac{\lambda}{\log (\lambda+1)}>\frac{3}{2},
\end{align}
for all $\lambda>1$.

Then combining \eqref{eq:lower bound 2},\eqref{eq: lower bound 3}, and \eqref{eq: lower bound 4}, we plug in $C= \sqrt{-2\log \alpha_k}$ and obtain that 
\begin{align*}
\bbE N_k 
&\geq \lambda\,\bbP\bigl(N\geq \lambda\bigr)
= \lambda\Bigl(1 - \bbP\bigl(N < \lambda\bigr)\Bigr)\\[1mm]
&\geq   \lambda\Bigl(1 - 2\,\bbP\Bigl(\max_{1\leq n\leq \lambda} M_n \geq \sqrt{-2\log \alpha_k}\Bigr)\Bigr)\\[1mm]
&\geq \lambda\Biggl(1 - \frac{2}{\sqrt{-2\log \alpha_k}} \Biggl[
\bbE\Bigl(M_\lambda \vee 0\Bigr)
+ \sqrt{\frac{\mu}{c}} \Bigl(\frac{\log (\lambda+1)}{\lambda}\Bigr)^{1/4}
\frac{2\log(\lambda+1)}{2c^2\mu^2 \lambda - 3\log(\lambda+1)} \\
&\quad\quad\quad\quad\quad\quad\quad
+ \frac{1}{\sqrt{c\mu}} \Bigl(\frac{\log (\lambda+1)}{\lambda}\Bigr)^{1/4}
\frac{\log(\lambda+1)}{2c^2\mu^2\lambda - \log(\lambda+1)}
\Biggr] \Biggr)\\[1mm]
&= \lambda\Biggl(1 - \frac{2}{\sqrt{-2\log \alpha_k}} \Biggl[\sqrt{\frac{\mu^2 \lambda + 1}{\log(\lambda+1)}} \\
&\quad\quad\quad\quad\quad\quad\quad
+ \sqrt{\frac{\mu}{c}} \Bigl(\frac{\log (\lambda+1)}{\lambda}\Bigr)^{1/4}
\frac{2\log(\lambda+1)}{2c^2\mu^2 \lambda - 3\log(\lambda+1)} \\
&\quad\quad\quad\quad\quad\quad\quad
+ \frac{1}{\sqrt{c\mu}} \Bigl(\frac{\log (\lambda+1)}{\lambda}\Bigr)^{1/4}
\frac{\log(\lambda+1)}{2c^2\mu^2 \lambda - \log(\lambda+1)}
\Biggr] \Biggr).
\end{align*}

The first term  in the bracket dominates the lower bound. We reach the desired bound by setting $\lambda = \frac{-2\log \alpha_k}{9\mu^2}\log\left(\frac{-2\log \alpha_k}{9\mu^2}\right)$.
One needs to note that \eqref{eq:check} is satisfied if $k$ is larger than some finite $K$, which doesn't depend on other quantities.

The first term reduces to
\begin{align*}
    \sqrt{\frac{\mu^2 \lambda + 1}{-2\log \lambda\log \alpha_k}} 
    \leq 
    \frac{2}{3}.
\end{align*}

The last two terms are 
\begin{align*}
    \sqrt{\frac{\mu}{c}}\left(\frac{\log (\lambda+1)}{\lambda}\right)^{1/4}  \frac{2\log(\lambda+1)}{2c^2\mu^2 \lambda - 3\log(\lambda+1)}
    & \lesssim \frac{\mu}{\sqrt{c}}\frac{\left(-\log \alpha_k\right)^{1/4}}{-\log\alpha_k -3}
\end{align*}
and 
\begin{align*}
    \frac{1}{\sqrt{c\mu}}\left(\frac{\log (\lambda+1)}{\lambda}\right)^{1/4}
         \frac{\log(\lambda+1)}{2c^2\mu^2 \lambda - \log(\lambda+1)} 
      \lesssim \frac{1}{\sqrt{c}} \frac{\left(-\log\alpha_k\right)^{1/4}}{-\log \alpha_k-1}.
\end{align*}
After some finite $K$, the sum of the last two terms is less than $1/3$, which concludes the lower bound. The result
\begin{align*}
    \lim\limits_{k\rightarrow \infty}
    \frac{\bbE N_k}{ \delta_k \log(\delta_k + 1)} = 1
\end{align*}
is immediate by combining the upper and lower bounds.

\end{proof}

\begin{lemma}[Exponentially decreasing probability of a false sign]
\label{lemma:false sign}
In the preceding display, the following inequalities holds
\begin{align}
\label{eq:false sign1}
 \bbP\bigl(\widehat{f}_kf_k<0\bigr)<\alpha_k
\end{align}
and
\begin{align}
\label{eq:false sign2}
    \bbP\bigl(f_{\boldsymbol{\cdot},k}^{\sharp}f_{\boldsymbol{\cdot},k}<0\bigr)<2\alpha_k,
\end{align}
for any $k\in \bbN^+$,
where $\boldsymbol{\cdot}\in \{\ell,r\}$.

\end{lemma}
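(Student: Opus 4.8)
The plan is to prove the two inequalities separately, in each case reducing the false-sign event to a one-sided deviation of a sub-Gaussian random walk and then invoking a time-uniform (anytime-valid) concentration inequality calibrated to the moving boundary~\eqref{eq:boundary}.

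For~\eqref{eq:false sign1}, fix a stage $k$ and condition on $\mathcal{F}_{k-1}$, so that $X_k$ (hence $f_k=f(X_k)$) is determined with $f_k\neq0$; by symmetry assume $f_k>0$. Writing $S_j=jf_k+W_j$ with $W_j=\sum_{i\le j}\varepsilon_{i}$, the estimate $\widehat f_k=S_{N_k}/N_k$ satisfies $\mathrm{sign}(\widehat f_k)=\mathrm{sign}(S_{N_k})$, and since the procedure stops the first time $|S_j|$ exceeds $T(j,\alpha_k)$, a false sign $\{\widehat f_k<0\}$ forces the lower boundary to be crossed before the upper one. Hence
\[
\bbP\bigl(\widehat f_k f_k<0\mid\mathcal F_{k-1}\bigr)
\;\le\;
\bbP\Bigl(\exists\,j\in\bbN^+:\,W_j\le -jf_k-T(j,\alpha_k)\,\Bigm|\,\mathcal F_{k-1}\Bigr)
\;\le\;
\bbP\Bigl(\exists\,j\in\bbN^+:\,W_j\le -T(j,\alpha_k)\Bigr),
\]
so the positive drift $jf_k$ can only help, and it suffices to control the driftless one-sided crossing.

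The core step is to bound this last probability by $\alpha_k$. I would work with the exponential supermartingale $L_j(\lambda)=\exp\!\bigl(-\lambda W_j-\tfrac12\lambda^2\sigma^2 j\bigr)$, which is a supermartingale under the sub-Gaussian condition of Assumption~\ref{assumption:regularity}, and then either (i) integrate $L_j(\lambda)$ against a mixing measure over $\lambda$ (method of mixtures) to produce a curved crossing boundary of the exact order $\sigma\sqrt{j\log(j+1)\log(1/\alpha_k)}$ and compare it termwise with $T(j,\alpha_k)$, or (ii) run a dyadic peeling argument over blocks $2^m\le j<2^{m+1}$, apply Doob's maximal inequality on each block, and sum the resulting geometric series. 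In either route the decisive algebraic fact is that the boundary is calibrated so that $T(j,\alpha_k)^2/(2\sigma^2 j)=\log(j+1)\log(1/\alpha_k)$, which forces the per-scale failure probabilities to decay geometrically and to aggregate to $\alpha_k$; the choice $\alpha_k=\alpha2^{-k}$ then makes the total error across stages summable, as later exploited in Proposition~\ref{prop:type1-error}.

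For~\eqref{eq:false sign2}, recall that $f^\sharp_{\boldsymbol{\cdot},k}$ is the empirical mean of a \emph{fresh} batch of $N^\sharp_{\boldsymbol{\cdot},k}=\tau_k N_{\boldsymbol{\cdot},k}$ i.i.d.\ samples at the opposite endpoint, where $N_{\boldsymbol{\cdot},k}$ is the stopping time from the stage at which that endpoint was set. Conditioning on the history renders $N^\sharp_{\boldsymbol{\cdot},k}$ and $X_{\boldsymbol{\cdot},k}$ measurable, so a single sub-Gaussian Chernoff bound gives $\bbP\bigl(f^\sharp_{\boldsymbol{\cdot},k}f_{\boldsymbol{\cdot},k}<0\mid\text{hist}\bigr)\le\exp\!\bigl(-f_{\boldsymbol{\cdot},k}^2 N^\sharp_{\boldsymbol{\cdot},k}/(2M^2)\bigr)$. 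On the event that the original stopping produced the correct sign---whose complement is already controlled at level $\alpha_k$ by~\eqref{eq:false sign1}---Proposition~\ref{thm:EN} together with the concentration of Theorem~\ref{thm:RSA} yields $N_{\boldsymbol{\cdot},k}f_{\boldsymbol{\cdot},k}^2\gtrsim \log(1/\alpha)\log N_{\boldsymbol{\cdot},k}$ with high probability, and since $\tau_k\to\infty$ the exponent is of order $\tau_k\log(1/\alpha)\log N_{\boldsymbol{\cdot},k}\gg\log(1/\alpha_k)$, rendering the conditional error negligible compared with $\alpha_k$. Taking expectations and adding the two contributions---the atypical-sign event and the fresh-batch error---produces the bound $2\alpha_k$.

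The main obstacle is the time-uniform deviation bound at the \emph{exact} level $\alpha_k$: matching the multiplicative structure $\log(j+1)\cdot\log(1/\alpha_k)$ inside the boundary demands either a precisely tuned mixture supermartingale or a peeling scheme whose geometric sum is pinned to $\alpha_k$ rather than to a generic constant multiple of it, with the small-$j$ terms of the boundary being the delicate ones; the discrepancy between the sub-Gaussian parameter $M$ and the scale $\sigma$ appearing in~\eqref{eq:boundary} must also be absorbed here. The secondary difficulty in~\eqref{eq:false sign2} is probabilistic bookkeeping: $N^\sharp_{\boldsymbol{\cdot},k}$ inherits the randomness of an earlier stopping time, which I handle by conditioning and invoking the two-sided sample-size control of Proposition~\ref{thm:EN}.
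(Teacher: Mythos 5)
Your treatment of \eqref{eq:false sign1} is essentially the paper's own argument: condition so that $f_k>0$, observe that the drift term $-f(X_k)N_k$ only helps, and reduce the false-sign event to a one-sided, driftless, time-uniform boundary crossing at level $\alpha_k$. The only difference is that the paper does not re-derive the anytime-valid bound: it verifies the numeric comparison $-2\log\alpha_k\,n\log n \ge n\bigl(\log\log(2n)+0.72\log(5.2/\alpha_k)\bigr)$ (valid for $k\ge K$) and then cites off-the-shelf nonasymptotic LIL-type inequalities, which are themselves proved by exactly the mixture-supermartingale and peeling techniques you sketch. So part one is sound and the same in substance.

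The genuine gap is in your argument for \eqref{eq:false sign2}. You invoke Proposition~\ref{thm:EN} and Theorem~\ref{thm:RSA} to get $N_{\boldsymbol{\cdot},k}f_{\boldsymbol{\cdot},k}^2\gtrsim\log(1/\alpha)\log N_{\boldsymbol{\cdot},k}$ ``with high probability,'' but this fails on two counts. First, it is circular: the paper's proof of Theorem~\ref{thm:RSA} itself uses Lemma~\ref{lemma:false sign} (the term $\bbP\bigl(S_{n_k,k}<-\sqrt{-2n_k\log n_k\log\alpha_k}\bigr)$ is dismissed there precisely by this lemma), so Theorem~\ref{thm:RSA} cannot appear in the proof of the lemma. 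Second, even ignoring circularity, Proposition~\ref{thm:EN} controls only $\bbE N_t$ and Theorem~\ref{thm:RSA} is a weak-convergence statement as $k\to\infty$; a CLT-type ``high probability'' is $1-o(1)$, which cannot yield the nonasymptotic bound $<2\alpha_k$ demanded for each fixed $k$. The paper avoids both problems by extracting the signal--sample-size inequality \emph{pathwise} from the stopping rule: at the stage $i$ where the endpoint was set, the crossing condition gives deterministically
\begin{align*}
\Bigl|f_{\boldsymbol{\cdot} k}N_i+\sum_{m=1}^{N_i}\varepsilon_m\Bigr|\;\ge\;\sigma\sqrt{-2N_i\log(N_i+1)\log\alpha_i},
\end{align*}
so on the event $\mathcal{E}_{ik}$ that the noise sum at time $N_i$ stays within an LIL-scale envelope $V_{ik}$ --- whose complement costs at most $\alpha_k$ via the same anytime-valid bound as in part one --- one obtains $f_{\boldsymbol{\cdot} k}^2N_i\ge -c\log(N_i+1)\log\alpha_i$ with no asymptotics whatsoever. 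Your Chernoff step is then the same as the paper's, but note also that the relevant multiplier in the exponent is the \emph{compounded} factor $\prod_{j=i}^{k-1}\bigl[\log(j+1)-1\bigr]$ accumulated since the endpoint was last refreshed (the algorithm reassigns $N_{\ell t}\gets N^\sharp_{\ell t}$, so your single-step $N^\sharp=\tau_kN_{\boldsymbol{\cdot},k}$ conceals this); it is this product, uniformly over $1\le i\le k-1$, that drives the exponent past $\log(1/\alpha_k)\asymp k$ and brings the first term below $\alpha_k$, giving the stated total of $2\alpha_k$.
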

\begin{proof}
We first establish inequality~\eqref{eq:false sign1}. Conditioning on $\{X_k>\theta\}$, we have
\[
\bbP\bigl(\widehat{f}_k f_k < 0 \mid X_k>\theta\bigr)
=\bbP\!\Biggl(\sum_{i=1}^{N_k}\varepsilon_{ik} < -f(X_k)N_k - \sqrt{-2\log\alpha_k\,N_k\log N_k}\,\Bigm|\, X_k>\theta\Biggr).
\]
Since the term $-f(X_k)N_k$ is nonnegative under our assumptions, it follows that
\[
\bbP\bigl(\widehat{f}_k f_k < 0 \mid X_k>\theta\bigr)
\le \bbP\!\Biggl(\sum_{i=1}^{N_k}\varepsilon_{ik} < - \sqrt{-2\log\alpha_k\,N_k\log N_k}\,\Bigm|\, X_k>\theta\Biggr).
\]
Furthermore, noting that for every $n\in\bbN^+$ and $k\geq K$ for some universal $K\in \bbN^+$,
\[
-2\log\alpha_k\,n\log n \ge n\Bigl(\log\log(2n) + 0.72\log\Bigl(\frac{5.2}{\alpha_k}\Bigr)\Bigr),
\]
we deduce
\begin{align*}
&\bbP\!\Biggl(\sum_{i=1}^{N_k}\varepsilon_{ik} < - \sqrt{-2\log\alpha_k\,N_k\log N_k}\,\Bigm|\, X_k>\theta\Biggr)
\\
\le &\bbP\!\Biggl(\exists\,n\in\bbN:\; \sum_{i=1}^{n}\varepsilon_{ik} < - \sqrt{n\Bigl(\log\log(2n) + 0.72\log\Bigl(\frac{5.2}{\alpha_k}\Bigr)\Bigr)}\Bigm|\, X_k>\theta\Biggr)
\end{align*}
An application of the non-asymptotic concentration inequality of the law of the iterated logarithm type (see, e.g., \citep[Theorem 1]{schreuder_nonasymptotic_2020} and \citep[Equation 11]{howard_time-uniform_2021}) shows that the last probability is bounded above by $\alpha_k$. By Bayes’s theorem, this completes the proof of \eqref{eq:false sign1}.

The proof of \eqref{eq:false sign2} proceeds analogously. Conditioning on $\{X_{\cdot k}>\theta\}$ and using the notation
\[
\bbP\bigl(f_{\cdot,k}^{\sharp} f_{\cdot,k} < 0 \mid X_{\cdot k}>\theta\bigr)
=\bbP\!\Biggl(\sum_{i=1}^{N_{\cdot k}^\sharp}\varepsilon_{i} < -f_{\cdot k}N_{\cdot k}^\sharp \,\Bigm|\, X_{\cdot k}>\theta\Biggr),
\]
we first observe that, by tracking the last update of the active endpoint (indexed by $i\in\{1,\dots,k-1\}$), one obtains
\[
\bbP\!\Biggl(\sum_{i=1}^{N_{\cdot k}^\sharp}\varepsilon_{i} < -f_{\cdot k}N_{\cdot k}^\sharp \,\Bigm|\, X_{\cdot k}>\theta\Biggr)
\le \max_{1\le i \le k-1} \bbE\!\Biggl(\exp\Bigl(-\frac{1}{2}f_{\cdot k}^2 \prod_{j=i}^{k-1}\Bigl[\log(j+1)-1\Bigr]\,N_i\Bigr) \,\Bigm|\, X_{\cdot k}>\theta\Biggr).
\]
To bound the expectation, we split according to the event
$ \mathcal{E}_{ik}=
\Bigl\{\Bigl|\sum_{m=1}^{N_i}\varepsilon_m\Bigr|\le V_{ik}=V(N_i,\alpha_i,\alpha_k)=:\sqrt{C N_i\log \log N_i \log \alpha_i} 
\Bigr\}$ for some sufficiently large $C$. That is,
\begin{align*}
\bbE\!\Biggl(\exp&\Bigl(-\frac{1}{2}f_{\boldsymbol{\cdot} k}^2 \prod_{j=i}^{k}\bigl[\log(j+1)-1\bigr]N_i\Bigr) \,\Bigm|\, X_{\boldsymbol{\cdot} k}>\theta\Biggr)\\[1mm]
&=\bbE\!\Biggl(\exp\Bigl(-\frac{1}{2}f_{\boldsymbol{\cdot} k}^2 \prod_{j=i}^{k-1}\bigl[\log(j+1)-1\bigr]N_i\Bigr)
\mathbbm{1}\Bigl\{\Bigl|\sum_{m=1}^{N_i}\varepsilon_m\Bigr|\le V_{ik}\Bigr\}\,\Bigm|\, X_{\boldsymbol{\cdot} k}>\theta\Biggr)\\[1mm]
&+\bbE\!\Biggl(\exp\Bigl(-\frac{1}{2}f_{\boldsymbol{\cdot} k}^2 \prod_{j=i}^{k-1}\bigl[\log(j+1)-1\bigr]N_i\Bigr)
\mathbbm{1}\Bigl\{\Bigl|\sum_{m=1}^{N_i}\varepsilon_m\Bigr|> V_{ik}\Bigr\}\,\Bigm|\, X_{\boldsymbol{\cdot} k}>\theta\Biggr).
\end{align*}
Note that by definition of stopping time $N_i$
\begin{align*}
    \Big|f_{\cdot k}N_i + \sum_{m=1}^{N_i}\varepsilon_m\Big|\geq  \sigma\sqrt{-2N_i\log(N_i+1)\log \alpha_i},
\end{align*}

thus, given event $\mathcal{E}_{ik}$, we have
\begin{align*}
    f_{\cdot k}^2N_i
    & \geq \Big(\sigma\sqrt{-2\log (N_i + 1)\log \alpha_i} - \frac{1}{\sqrt{N_i}}\Big|\sum_{m=1}^{N_i}\varepsilon_m\Big|\Big)^2\\
    & \geq \Big(\sigma\sqrt{-2\log (N_i + 1)\log \alpha_i} - N_i^{-1/2}V_{ik}\Big)^2\\
    & \geq -c \log(N_i + 1)\log \alpha_i.
\end{align*}

Therefore, the first term is upper bound by 
\begin{align*}
     & \bbE\!\Biggl(\exp\Bigl(-\frac{1}{2}f_{\cdot k}^2 \prod_{j=i}^{k-1}\bigl[\log(j+1)-1\bigr]N_i\Bigr)
\mathbbm{1}\Bigl\{\Bigl|\sum_{m=1}^{N_i}\varepsilon_m\Bigr|\le V_{ik}\Bigr\}\,\Bigm|\, X_{\cdot k}>\theta\Biggr)\\
\leq &
\exp\Big(c \prod\limits_{j=i}^{k-1}[\log (j+1)-1] \log \alpha_i\Big)
\end{align*}
Combining the fact that 
\begin{align*}
     \min_{1\leq i \leq k-1}\prod_{j=i}^{k-1}[\log(j+1)-1]\geq (\log (k)-1)\log \alpha_{k-1}\gg k,
\end{align*}
when $k$ is sufficiently large, we obtain the first term is less than or equal to $\alpha_k$.

The second term is upper bounded by
\begin{align*}
\bbP\Big(\Bigl|\sum_{m=1}^{N_i}\varepsilon_m\Bigr|> V_{ik}\Big)\leq \alpha_k.
\end{align*}
Combining the bounds for the preceding two terms leads to~\eqref{eq:false sign2}.
\end{proof}

\begin{remark}
\label{remark:necessity of log}
By law of the iterated logarithm, for any \(C > 0\), $\mathbb{P}\bigl(\exists n \in \mathbb{N}, \sum_{i=1}^n \varepsilon_n >\sqrt{Cn}\bigr) = 1$. The preceding proof necessitates the inclusion of an extra $\sqrt{\log(n)}$ term in the moving boundary \eqref{eq:boundary}, resulting in \(T_n = \widetilde{\mathcal{O}}\left(\sqrt{n\log n}\right)\). 
\end{remark}

\begin{lemma}[Lemma~\ref{lemma:a.s. correct}]
Under Assumption~\ref{assumption:regularity}, with probability one, there exists a finite (random) index $T\in \bbN^+$ such that 
$\mathrm{sign}(\widehat{f}_k) = \mathrm{sign}(f(X_k))$ and $\mathrm{sign}(f^\sharp_k) = \mathrm{sign}(f(X_k))$ always hold for any $k\geq T$.
\end{lemma}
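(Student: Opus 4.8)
The plan is to reduce the statement to a one-line application of the first Borel--Cantelli lemma, feeding it the exponentially small false-sign probabilities already supplied by Lemma~\ref{lemma:false sign}. First I would fix, for each $k\in\bbN^+$, the two false-sign events
\begin{align*}
A_k := \bigl\{\widehat{f}_k\, f(X_k) < 0\bigr\}, \qquad B_k := \bigl\{f^\sharp_k\, f(X_k) < 0\bigr\},
\end{align*}
so that the desired sign agreements $\mathrm{sign}(\widehat{f}_k)=\mathrm{sign}(f(X_k))$ and $\mathrm{sign}(f^\sharp_k)=\mathrm{sign}(f(X_k))$ are exactly the complementary events $A_k^c$ and $B_k^c$ (on the almost-sure event where $f(X_k)\neq0$; see below).

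Next I would invoke inequalities~\eqref{eq:false sign1} and~\eqref{eq:false sign2}, which give $\bbP(A_k)<\alpha_k=\alpha 2^{-k}$ and $\bbP(B_k)<2\alpha_k=\alpha 2^{-(k-1)}$. Because the boundary schedule fixes $\alpha_k=\alpha 2^{-k}$, both tail sums are geometric and hence summable: $\sum_{k\ge1}\bbP(A_k)\le\alpha$ and $\sum_{k\ge1}\bbP(B_k)\le2\alpha$. The first Borel--Cantelli lemma then forces $\bbP(A_k\ \text{i.o.})=0$ and $\bbP(B_k\ \text{i.o.})=0$, so that $\bbP\bigl((A_k\cup B_k)\ \text{i.o.}\bigr)=0$. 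On the complementary event of full probability only finitely many $A_k\cup B_k$ occur, whence there is a finite random index $T\in\bbN^+$ beyond which both sign agreements hold simultaneously, which is precisely the assertion of the lemma.

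The step I expect to require the most care is not the probabilistic core but the well-posedness of the sign comparison: I would need to confirm that $f(X_k)\neq0$ almost surely at every queried location, since otherwise $\mathrm{sign}(f(X_k))=0$ and the equality $A_k^c$ cannot hold for any nonzero estimate. This follows from the unique-root sign condition of Assumption~\ref{assumption:regularity} combined with the finiteness of the stopping time $N_k$ whenever $f(X_k)\neq0$ (established in Section~\ref{section:auxiliary result}); the event $\{f(X_k)=\theta\}$ is null because each bracketing update produces a continuously distributed $X_k$ with no atom at $\theta$. Once this null event is discarded, the remaining argument is entirely routine, the summability needed for Borel--Cantelli being handed to us for free by the geometric decay of $\alpha_k$.
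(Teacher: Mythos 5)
Your proof is correct and takes essentially the same approach as the paper, whose entire proof is the one-line observation that the result follows from Lemma~\ref{lemma:false sign} via the standard Borel--Cantelli lemma---exactly your route through the summable bounds $\bbP(A_k)<\alpha 2^{-k}$ and $\bbP(B_k)<2\alpha 2^{-k}$. Your extra care about $f(X_k)\neq 0$ is sensible, though the paper disposes of it by assumption (each $\mu_t\neq 0$ almost surely, as stipulated in the setting of Section~\ref{subsection:RSA}) rather than by a no-atom argument about the distribution of $X_k$.
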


\begin{proof}[Proof of Lemma \ref{lemma:a.s. correct}]
The results is straightforward from Lemma \ref{lemma:false sign} by standard Borel-Cantelli lemma.
\end{proof}

The following results are useful in the case of discontinuous regression function $f$ (Section \ref{subsection:discontinuous regression function}).

\begin{lemma}
\label{lemma:almost surely}
The following holds when there exists a constant such that $B>|\mu|>0$ is bounded above:
\begin{itemize}
    \item[(i)] $N_k\rightarrow \infty$ almost surely
    \item[(ii)] $\Bar{\varepsilon}_{N_k}\rightarrow 0$ almost surely.
\end{itemize}
\end{lemma}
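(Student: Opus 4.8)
The plan is to derive both claims from the single fact that $-\log\alpha_k = k\log 2 - \log\alpha \to \infty$, which forces the moving boundary $T(j,\alpha_k)=\sigma\sqrt{-2j\log(j+1)\log\alpha_k}$ to diverge, at every fixed level $j\ge 1$, as $k\to\infty$. Part (i) is about the stopping time outrunning any fixed horizon because the bar it must clear keeps rising, and part (ii) is then a strong law of large numbers evaluated along the diverging random index.

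For part (i), I would first record that $N_k<\infty$ almost surely for each $k$: since $|\mu|>0$ and $T(j,\alpha_k)\ll j$, the drift eventually dominates the boundary, exactly as in the finiteness argument already invoked after~\eqref{eq:boundary}. To upgrade this to $N_k\to\infty$, fix $m\in\mathbb N^+$ and bound, by a union bound,
\[
\mathbb P(N_k\le m)\le\sum_{j=1}^m\mathbb P\bigl(|S_{jk}|>T(j,\alpha_k)\bigr),
\]
where $S_{jk}=j\mu+\sum_{i=1}^j\varepsilon_{ik}$. On each summand the drift $|j\mu|\le mB$ is bounded while $T(j,\alpha_k)\asymp\sqrt{-\log\alpha_k}\asymp\sqrt{k}\to\infty$, so the event forces the sub-Gaussian sum $\sum_{i=1}^j\varepsilon_{ik}$ to exceed a threshold of order $\sqrt{k}$; the sub-Gaussian tail then gives $\mathbb P(|S_{jk}|>T(j,\alpha_k))\le\exp(-c\,k)$ for some $c=c(m,\sigma,M)>0$. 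Summability in $k$ and Borel--Cantelli yield $N_k>m$ eventually almost surely, and intersecting over the countable family $m\in\mathbb N^+$ gives $N_k\to\infty$ almost surely. Note that only the upper bound $|\mu|\le B$ is used here; the hypothesis $|\mu|>0$ enters solely through finiteness of $N_k$.

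For part (ii), having $N_k\to\infty$ almost surely, I would control the randomly indexed average by sandwiching the random stopping time between deterministic levels. Choosing a deterministic sequence $n_k\to\infty$ with $n_k=o(\mathbb E N_k)$---for instance $n_k\asymp k$, which is admissible since Proposition~\ref{thm:EN} gives $\mathbb E N_k\asymp\delta_k\log(\delta_k+1)\asymp k\log k$---I would write, for fixed $\epsilon>0$,
\[
\bigl\{|\bar\varepsilon_{N_k}|>\epsilon\bigr\}\subseteq\{N_k<n_k\}\cup\Bigl\{\sup_{n\ge n_k}|\bar\varepsilon_{nk}|>\epsilon\Bigr\}.
\]
The first event has summable probability by the lower-tail estimate for $N_k$ furnished in the proof of Proposition~\ref{thm:EN} (which decays like a power of $\alpha_k\asymp 2^{-k}$), and the second by a maximal dyadic-peeling sub-Gaussian inequality giving $\mathbb P(\sup_{n\ge n_k}|\bar\varepsilon_{nk}|>\epsilon)\le C\exp(-c\epsilon^2 n_k)$. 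Both bounds are summable in $k$, so Borel--Cantelli forces $|\bar\varepsilon_{N_k}|>\epsilon$ to occur only finitely often; letting $\epsilon\downarrow0$ along a countable sequence gives $\bar\varepsilon_{N_k}\to 0$ almost surely. If the errors are shared across stages the argument is even shorter: then $N_k$ is nondecreasing in $k$ and $\bar\varepsilon_{N_k}$ is a subsequence of $\bar\varepsilon_n\to 0$ guaranteed by the strong law.

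The main obstacle is the dependence in part (ii): the stopping time $N_k$ and the average $\bar\varepsilon_{N_k}$ are measurable with respect to the same noise, so one cannot treat the index as independent of the summands. The device that resolves this is the deterministic sandwich $n_k\le N_k$, which replaces the random average by the supremum $\sup_{n\ge n_k}|\bar\varepsilon_{nk}|$ and thereby decouples the two roles of the noise at the cost of a time-uniform maximal bound. The one quantitative point to verify is that $n_k$ can be taken large enough for the maximal bound to be summable while keeping $\mathbb P(N_k<n_k)$ summable as well, which is precisely where the sharp order of $\mathbb E N_k$ from Proposition~\ref{thm:EN} is needed.
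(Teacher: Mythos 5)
Your proof is correct, and part (i) is essentially the paper's argument: the paper bounds $\mathbb{P}(N_k\le M)$ for fixed $M$ via a time-uniform LIL-type maximal inequality, obtaining a bound of the form $2\exp\bigl(-c\bigl(\sqrt{-2\log\alpha_k}-\sqrt{CM/\log(M+1)}\bigr)^2\bigr)$, then applies Borel--Cantelli and intersects over the countable family of levels; your union bound over $j\le m$ with individual sub-Gaussian tails is an equally valid, slightly cruder version of the same computation, exploiting the same mechanism that $-\log\alpha_k\asymp k$ makes the boundary at any fixed horizon diverge.

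Part (ii) is where you genuinely diverge from the paper, and to your advantage. The paper disposes of (ii) in one line by combining the strong law with Gut's Theorem 2.1 (Lemma~\ref{lemma:gut}): if $Y_n\to Y$ a.s.\ and $N(k)\to\infty$ a.s., then $Y_{N(k)}\to Y$ a.s. That pointwise argument is airtight when $\{\bar\varepsilon_n\}$ is a \emph{single} sequence evaluated along random indices, but in the paper's own setting the errors $\{\varepsilon_{ik}\}_{i\ge1}$ are fresh and independent across stages, so $\bar\varepsilon_{N_k}=N_k^{-1}\sum_{i=1}^{N_k}\varepsilon_{ik}$ is a triangular array and the cited theorem does not literally apply; your deterministic sandwich $\{|\bar\varepsilon_{N_k}|>\epsilon\}\subseteq\{N_k<n_k\}\cup\bigl\{\sup_{n\ge n_k}|\bar\varepsilon_{nk}|>\epsilon\bigr\}$, with summable bounds on both pieces followed by Borel--Cantelli, is exactly the uniform-in-$n$ control needed to make the claim rigorous in the triangular-array setting (you note this yourself), and it yields an exponential rate as a by-product. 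One point of attribution to fix: the lower-tail estimate $\mathbb{P}(N_k<n_k)$ is not actually furnished by the proof of Proposition~\ref{thm:EN} --- the maximal-inequality computation there only produces constant-order (non-summable) bounds at $\lambda\asymp\mathbb{E}N_k$. But since you take $n_k\asymp k=o(\mathbb{E}N_k)$, the exponentially small bound you need follows from the same direct sub-Gaussian union bound you already used in part (i): for $n\le \epsilon_0 k$ with $\epsilon_0$ small, the drift $n|\mu|\le nB$ is at most half of $T(n,\alpha_k)\asymp\sigma\sqrt{nk\log(n+1)}$, so each term is at most $2(n+1)^{-ck}$ and the union over $n\le n_k$ is summable in $k$; alternatively the LIL-type time-uniform bound invoked in Lemma~\ref{lemma:false sign} serves the same purpose. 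So the gap is one of citation, not substance.
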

\begin{proof}
We start with \textit{(i)} as follows. For any $M>0$, we choose $k\in \bbN^+$ such that $k>\frac{CM^2}{\log M}$ and obtain that 
\begin{align*}
    \bbP\bigl(N_k \leq M\bigr)
    & = 
    \bbP\biggl(\max\limits_{1\leq n\leq M}\frac{\left|\sum\limits_{i=1}^n \varepsilon_{ik} + \mu n\right|}{\sqrt{n\log(n + 1)}}\geq \sqrt{2\log \alpha_k}\biggr)\\
    & \leq 2 \bbP\biggl(\max\limits_{1\leq n\leq M} \frac{\sum\limits_{i=1}^n \varepsilon_{ik} + \mu n}{\sqrt{n\log(n + 1)}}>\sqrt{2\log \alpha_k}\biggr)\\
    & \leq 2 \bbP\bigl(\max\limits_{1\leq n\leq M} \frac{\sum\limits_{i=1}^n \varepsilon_{ik} }{\sqrt{n\log(n + 1)}}>\sqrt{2\log \alpha_k}-\sqrt{\frac{CM}{\log(M+1)}}\bigr)\\
    & \leq 2\exp\biggl(-c\bigl(\sqrt{2\log \alpha_k}-\sqrt{\frac{CM}{\log(M+1)}}\bigr)^2\biggr),
\end{align*}
where the last step is due to  \citep[Theorem 1]{schreuder_nonasymptotic_2020}, \citep[Equation 11]{howard_time-uniform_2021}
and the choice of $k$.
We obtain almost sure divergence by standard Borel-Cantelli Lemma. 
\end{proof} 
We are ready to combine \citep[Theorem 2.1]{gut_stopped_2009} with strong law of large number to conclude \textit{(ii)}. We state the theorem in Lemma \ref{lemma:gut}
\begin{lemma}[Theorem 2.1 \cite{gut_stopped_2009}].
\label{lemma:gut}
Suppose that $Y_n\overset{\text{a.s.}}{\rightarrow} Y$ as $n\rightarrow \infty$ and $N(k)\overset{\text{a.s.}}{\rightarrow} \infty$ as $k\rightarrow \infty$. Then
\begin{align*}
    Y_{N(k)}\overset{\text{a.s.}}{\rightarrow} Y, 
\end{align*}
as $k\rightarrow \infty$.
\end{lemma}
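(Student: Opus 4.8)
The plan is to reduce the random-index statement to a pathwise deterministic argument carried out on a single full-measure event. First I would set
\[
\Omega_0 = \bigl\{\omega : Y_n(\omega)\to Y(\omega)\bigr\}\cap\bigl\{\omega : N(k)(\omega)\to\infty\bigr\}.
\]
By hypothesis each of the two events has probability one, so its complement is contained in the union of two null sets and hence $\bbP(\Omega_0)=1$.

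Next I would fix an arbitrary $\omega\in\Omega_0$; from here the argument is entirely deterministic. Given $\varepsilon>0$, the convergence $Y_n(\omega)\to Y(\omega)$ supplies an integer $n_0=n_0(\omega,\varepsilon)$ with $|Y_n(\omega)-Y(\omega)|<\varepsilon$ for all $n\geq n_0$. Since $N(k)(\omega)\to\infty$, there is $k_0=k_0(\omega,\varepsilon)$ such that $N(k)(\omega)\geq n_0$ for every $k\geq k_0$. Composing the two facts gives $|Y_{N(k)(\omega)}(\omega)-Y(\omega)|<\varepsilon$ for all $k\geq k_0$, that is, $Y_{N(k)}(\omega)\to Y(\omega)$. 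As $\omega\in\Omega_0$ was arbitrary and $\bbP(\Omega_0)=1$, this yields $Y_{N(k)}\overset{\text{a.s.}}{\rightarrow}Y$.

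There is no genuinely hard step in this argument; the only points requiring a moment's care are (i) confirming that the intersection $\Omega_0$ retains full measure, which is immediate since its complement lies in the union of two null sets, and (ii) ensuring that $N(k)$ is almost surely finite-valued, so that the composite index $Y_{N(k)}$ is well-defined and measurable. The latter is precisely what part (i) of Lemma~\ref{lemma:almost surely} supplies in our application, where one takes $Y_n=\bar\varepsilon_n\to 0$ (by the strong law of large numbers) and $N(k)=N_k\to\infty$ almost surely. I would emphasize that the conclusion is purely a statement about almost sure convergence under a random time change and requires no independence or integrability beyond the two stated hypotheses.
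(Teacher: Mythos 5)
Your proof is correct: the pathwise argument on the full-measure intersection $\Omega_0$, combined with the observation that $N(k)\to\infty$ eventually pushes the random index past any fixed threshold $n_0(\omega,\varepsilon)$, is exactly the standard proof of this classical fact. The paper itself gives no proof, stating the lemma as a citation of \citep[Theorem~2.1]{gut_stopped_2009}, and your argument reproduces the cited result's standard (and essentially only) proof, including the minor but worthwhile care about $N(k)$ being a.s.\ finite-valued so that $Y_{N(k)}$ is well-defined.
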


\section{Auxiliary Results}
\label{section:auxiliary result}

This appendix collects several heuristic remarks that, while not essential to the main asymptotic developments in Section~\ref{section:asymptotics}, complete the exposition and may be of independent interest.
\subsection{Finiteness of stopping time}
\begin{proposition}
Let $1 \le t \le k$ be fixed, and suppose 
\[
  T_j := T(j,\alpha_t) = o(j)
  \quad\text{as }j\to\infty.
\]
Then, for any stage point $X_t\neq\theta$, the stopping time $N_t$ is almost surely finite, i.e.
\[
  \Pr\bigl\{N_t<\infty\bigr\} = 1.
\]
\end{proposition}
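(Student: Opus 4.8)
The plan is to exploit the contrast between the \emph{linear} growth of $|S_j|$ and the \emph{sublinear} growth of the boundary $T_j$. Throughout I treat $X_t = x$ as a fixed design point (when $X_t$ is history-dependent one simply conditions on $\mathcal F_{t-1}$ and argues pathwise, since $x$ is then $\mathcal F_{t-1}$-measurable). The first step is to record that $x \neq \theta$ forces $\mu := f(x) \neq 0$: indeed, condition~2 of Assumption~\ref{assumption:regularity} states that $f$ has a unique root at $\theta$ and satisfies the sign condition $f(x)(x-\theta)>0$ whenever $x\neq\theta$, whence $\mu \neq 0$.

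Next I would decompose $S_j = j\mu + \sum_{i=1}^j \varepsilon_i$ and apply the strong law of large numbers to the i.i.d.\ centered errors (which are integrable, being sub-Gaussian), obtaining $j^{-1}\sum_{i=1}^j \varepsilon_i \to 0$, and hence $j^{-1}|S_j| \to |\mu| > 0$, on an event $\Omega_0$ with $\mathbb{P}(\Omega_0) = 1$. The hypothesis $T_j = o(j)$ gives $j^{-1} T_j \to 0$. Combining these two limits on $\Omega_0$, for each sample path there is a (path-dependent) index $J$ with $j^{-1}|S_j| > |\mu|/2$ and $j^{-1} T_j < |\mu|/4$ for all $j \ge J$; in particular $|S_J| > \tfrac{|\mu|}{2} J > \tfrac{|\mu|}{4} J > T_J$, so the boundary is crossed by stage $J$. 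Since $\{N_t = \infty\} = \{|S_j| \le T_j \ \forall j\in\mathbb{N}^+\}$, this shows $N_t \le J < \infty$ on $\Omega_0$, and as $\mathbb{P}(\Omega_0) = 1$ we conclude $\mathbb{P}(N_t < \infty) = 1$.

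There is no genuine analytic obstacle here; the argument is a direct comparison of growth rates, and the only places that use the hypotheses are $\mu \neq 0$ (from $x \neq \theta$) and $T_j = o(j)$. It is nonetheless worth noting where the argument would break, since this clarifies the role of the condition $x\neq\theta$: if $x = \theta$ then $\mu = 0$ and $|S_j| = |\sum_{i=1}^j \varepsilon_i|$ grows only like $\sqrt{j \log\log j}$ by the law of the iterated logarithm, which the companion requirement $T_j \gg \sigma\sqrt{2 j \log\log j}$ dominates, so the walk never crosses and $N_t = \infty$ almost surely. This is consistent with the two-sided boundary design discussed after~\eqref{eq:boundary}, and shows that $x \neq \theta$ is not merely convenient but necessary for finiteness.
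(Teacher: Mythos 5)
Your proposal is correct and follows essentially the same route as the paper: both arguments reduce to the observation that $x\neq\theta$ forces $\mu=f(x)\neq 0$, so the drift $j\mu$ grows linearly while the boundary $T_j=o(j)$ is sublinear, making a crossing inevitable almost surely. The only cosmetic difference is that you control the noise sum via the strong law of large numbers, whereas the paper invokes the law of the iterated logarithm for the same $o(j)$ bound --- your choice is if anything slightly more elementary, and your closing remark on the necessity of $x\neq\theta$ matches the paper's discussion following the definition of the moving boundary.
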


\begin{proof}
Recall that
\[
  \mu_t := \bbE\bigl[Y_{it}\mid X_{t}\bigr] = f(X_t),
  \quad
  S_j := \sum_{i=1}^j \bigl(Y_{i,t} - \mu_t\bigr).
\]
By Assumption~\ref{assumption:regularity}, if $X_t>\theta$ then $\mu_t>0$ (the case $X_t<\theta$ is identical up to sign). Hence
\[
  \{N_t = \infty\}
  \;\subseteq\;
  \Bigl\{\limsup_{j\to\infty}\frac{|S_j + j\,\mu_t|}{T_j}\le1\Bigr\}
  \;=\;
  \Bigl\{\limsup_{j\to\infty}\frac{|j\,\mu_t + \sum_{i=1}^j\varepsilon_{i,t}|}{T_j}\le1\Bigr\}.
\]
But since $T_j=o(j)$ and $\mu_t>0$, we obtain almost surely
\[
  \limsup_{j\to\infty}\frac{|S_j + j\,\mu_t|}{T_j}
  \;\ge\;
  \limsup_{j\to\infty}\frac{j\,\mu_t - |S_j|}{T_j}
  \;=\;
  +\infty,
\]
where the inequality is due to law of the iterated logarithm.
It follows that $\Pr(N_t = \infty)=0$, i.e.\ $N_t<\infty$ almost surely.
\end{proof}

\subsection{Optimality of root-finding problem under higher order smoothness}

In this subsection, we have an informal explanation which supports our conjecture that, under Assumptions~\ref{assumption:regularity} and \eqref{assumption:higher smooth},  the optimal rate of convergence for root-finding problem is $n^{-\frac{1}{2\gamma}}$.
Let the regression function satisfy
$f(x)=\beta(x-\theta)^{\gamma}\operatorname{sign}(x-\theta)(1+o(1))$
with $\gamma>1$.
Place all $n$ design points at a fixed point
$X_i=\theta+\Delta$. Assume the gaussianity of the noise, we have
$Y_i\sim\mathcal N\!\bigl(\beta\Delta^{\gamma},\sigma^{2}\bigr)$ and  
\[
\frac{\partial}{\partial\theta}\, \mathbb E_\theta Y_i
   = -\beta\gamma\,|\Delta|^{\gamma-1}.
\]
Hence the Fisher information in a \emph{single} observation is
\[
I_{1}(\theta)=\frac{(\beta\gamma)^2}{\sigma^{2}}\Delta^{2\gamma-2},
\qquad
I_{n}(\theta)=nI_{1}(\theta).
\]

To exploit the information one must still \emph{distinguish the sign} of
$f(\theta+\Delta)$.  A constant‐power test of
$\beta\Delta^{\gamma}$ against $0$ requires
$\beta|\Delta|^{\gamma}\gtrsim\sigma n^{-1/2}\!$, i.e.
\[
\Delta  \;\gtrsim\;
      \bigl(\sigma/\beta\bigr)^{1/\gamma}n^{-1/(2\gamma)}.
\]
Condition~($\ast$) prevents $\Delta$ from vanishing too quickly. With the tight choice $\Delta=\Delta_n$ from~($\ast$),
\[
I_{n}(\theta)\;\asymp\;
  n\frac{(\beta\gamma)^2}{\sigma^{2}}
  \Bigl((\sigma/\beta)^{1/\gamma}n^{-1/(2\gamma)}\Bigr)^{2\gamma-2}
  \;=\; C\,n^{1/\gamma},
\]
so the (local) Cramér--Rao lower bound gives
\[
\operatorname{Var}(\hat\theta_{n})
     \;\ge\; I_{n}(\theta)^{-1}
     \;\asymp\; n^{-1/\gamma},
\qquad
\sqrt{\operatorname{Var}(\hat\theta_{n})}
     \;\gtrsim\; n^{-1/(2\gamma)}.
\]
\section{Proof of Theorem \ref{thm:RSA}}
\label{section:appendix stopping time CLT}

 We first prove a lemma facilitating our further analysis.

\begin{lemma}
\label{lemma:condition}
Let a real-valued random variable $B_k$ be measurable with respect to an increasing sigma-algebra $\mathcal{G}_{k}$ and $A_k$ depend on $B_k$. If $\lim\limits_{k\rightarrow\infty}\bbP\left(A_k<t|\mathcal{G}_k\right) = \Phi(t)$ almost surely for any $t\in \bbR$, then 
\begin{align}
    A_k\overset{d}{\longrightarrow}\mathcal{N}(0,1).
\end{align}
\end{lemma}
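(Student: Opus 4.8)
The plan is to deduce the unconditional weak convergence from the conditional hypothesis by averaging, combining the tower property with the bounded convergence theorem. Fix $t \in \bbR$. The first step is to write the unconditional distribution function as the expectation of the conditional one, via the law of total probability,
\[
\bbP(A_k < t) = \bbE\bigl[\bbP(A_k < t \mid \mathcal{G}_k)\bigr],
\]
so that it suffices to analyze the right-hand side as $k \to \infty$.

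The key step will be to observe that the integrand is uniformly bounded, $0 \le \bbP(A_k < t \mid \mathcal{G}_k) \le 1$ for every $k$, and that by hypothesis this sequence of random variables converges almost surely to the \emph{deterministic} constant $\Phi(t)$. I would then apply the bounded convergence theorem to conclude $\bbP(A_k < t) \to \bbE[\Phi(t)] = \Phi(t)$. Since $t$ is arbitrary, this yields pointwise convergence of the distribution functions of $A_k$ to $\Phi$ at every $t \in \bbR$.

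To finish, I would invoke the continuity of $\Phi$ on all of $\bbR$: pointwise convergence of the distribution functions at every point is exactly weak convergence, so $A_k \overset{d}{\longrightarrow} \mathcal{N}(0,1)$. The one piece of technical care concerns the interchange between the strict inequality $A_k < t$ appearing in the hypothesis and the non-strict inequality used in the usual definition of the distribution function; since the limit $\Phi$ has no atoms, $\bbP(A_k < t)$ and $\bbP(A_k \le t)$ share the common limit $\Phi(t)$, and the distinction is immaterial.

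I do not anticipate a genuine obstacle here; the whole argument hinges on recognizing that the almost-sure limit $\Phi(t)$ is nonrandom, which is precisely what lets the expectation of the limit equal $\Phi(t)$ itself and thereby collapses the conditional statement to the unconditional one. A minor but reassuring point is that the convergence need only be verified for each fixed $t$ in isolation—no common probability-one event across all $t$ is required—so the pointwise almost-sure hypothesis as stated supplies exactly what the bounded convergence theorem needs.
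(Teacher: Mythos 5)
Your proposal is correct and follows essentially the same route as the paper: both apply the tower property $\bbP(A_k<t)=\bbE\bigl[\bbP(A_k<t\mid\mathcal{G}_k)\bigr]$ and then pass the limit inside the expectation via dominated (bounded) convergence, using that the almost-sure limit $\Phi(t)$ is a deterministic constant. Your added remarks on the strict-versus-nonstrict inequality and on needing only pointwise-in-$t$ convergence are sound refinements of the same argument, not a different approach.
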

\begin{proof}
The proof is straightfoward as 
\begin{align*}
    \lim\limits_{k\rightarrow\infty}\bbP\left(A_k<t\right) = \lim\limits_{k\rightarrow\infty}
    \bbE\bigl(\bbP\left(A_k<t|\mathcal{F}_k\right)\bigr)
    = 
    \bbE\bigl(  \lim\limits_{k\rightarrow\infty}\bbP\left(A_k<t|\mathcal{F}_k\right)\bigr) = \Phi(t),
\end{align*}
where the second step is due to dominated convergence theorem.
\end{proof}
\begin{proof}[Proof of Theorem \ref{thm:RSA}]
For simplicity, we assume $\mu_k >0$ because, for negative $\mu_k$, the proof is analogous by adding an extra minus sign.
We first show \eqref{eq:RSA1}. For any $z\in \bbR$,
\begin{align*}
    & \bbP\biggl( \bigl(2\sqrt{N'_K/\mu_k^2)}\bigr)^{-1}(N_k- N_k')<z\biggr)
   = 
    \bbP\left( N_k \leq \left\lfloor N'_k + 2z\sqrt{\frac{N'_k}{\mu_k^2}} \right\rfloor=:n_k \right)\\
     =
     &
    \bbP\left(
    |S_{n_k,k}|\geq \sqrt{-2 n_k\log n_k\log \alpha_k}
    \right)\\
   + & 
    \bbP\left(\max\limits_{1\leq n\leq n_k-1}
    \frac{|S_{n,k}|}{\sqrt{n\log n}}\geq \sqrt{-2\log \alpha_k},\left|S_{n_k,k}\right|< \sqrt{-2 n_k\log n_k\log \alpha_k}\right) \\
     =&: T_1 + T_2.
\end{align*}
We proceed by proving that $T_1\rightarrow \Phi_\sigma(z)$ and $T_2 = o(1)$ as follows, where $\Phi_\sigma(z)$ denotes the probability distribution of a mean zero and variance $\sigma^2$ gaussian random variable. Without loss of generality, we assume that $\mu_k>0$ and obtain that
\begin{align*}
    T_1 
    & = \bbP\left(S_{n_k,k}\geq \sqrt{-2n_k\log n_k\log \alpha_k}\right)+
    \bbP\left(S_{n_k,k}<-\sqrt{-2n_k\log n_k\log \alpha_k}\right) \\
    & = \bbE\left(\bbP\left(S_{n_k,k}\geq \sqrt{-2n_k\log n_k\log \alpha_k}|\mathcal{F}_k\right)\right) +
    \bbP\left(S_{n_k,k}<-\sqrt{-2n_k\log n_k\log \alpha_k}\right)\\
    & = \bbE\left(\bbP\left(
    \frac{S_{n_k,k}-\mu_k n_k}{\sqrt{n_k}}
    \geq \sqrt{-2\log\alpha_k \log n_k}-\mu_k\sqrt{n_k}
    |\mathcal{F}_k\right)\right)\\
    & +
    \bbP\left(S_{n_k,k}<-\sqrt{-2n_k\log n_k\log \alpha_k}\right)\\
    & = \bbE\left(\bbP\left(-
    \frac{S_{n_k,k}-\mu_k n_k}{\sqrt{n_k}}
    <\mu_k\sqrt{n_k}-\sqrt{-2\log\alpha_k \log n_k}
    |\mathcal{F}_k\right)\right)\\
    & + 
    \bbP\left(S_{n_k,k}<-\sqrt{-2n_k\log n_k\log \alpha_k}\right).
\end{align*}
The proceed by expanding $\mu_k\sqrt{n_k}-\sqrt{-2\log\alpha_k \log n_k}$, conditioning on $\mathcal{F}_k$, because $\bbE N_k\rightarrow \infty$ stated in Proposition~\ref{thm:EN} as $k\rightarrow\infty$,
\begin{align*}
   & \mu_k\sqrt{n_k}-\sqrt{-2\log\alpha_k \log n_k}\\
   = &  \frac{1}{\sqrt{n_k}}\left(\mu_k n_k -\sqrt{-2\log \alpha_k}\sqrt{n_k\log n_k}\right)\\
   = & \frac{1}{\sqrt{n_k}}
   \left(
    \mu_k n_k -\sqrt{-2\log \alpha_k}
    \left(\sqrt{N'_k \log N'_k} + z\sqrt{\frac{N_k'}{\mu_k^2}}\frac{\log N_k' + 1}{\sqrt{N_k'\log N_k'}}\right)
   \right) + o_P(1)\\
   = & \frac{1}{\sqrt{n_k}}
   \left(\mu_k\left(n_k -N_k'\right)-z\sqrt{\frac{-2\log \alpha_k \log N'_k}{\mu_k^2}}\right) + o_P(1)\\
   & \quad [\because \sqrt{\frac{N_k'}{\log N_k'}} = \frac{\mu_k}{\sqrt{-2\log \alpha_k}}]\\
    = &  \frac{1}{\sqrt{n_k}}
   \left(2 z \sqrt{N_k'}-z\sqrt{\frac{-2\log \alpha_k \log N'_k}{\mu_k^2}}\right) + o_P(1)
   \overset{a.s.}{\longrightarrow} z.
\end{align*}
By definition, for fixed $\mu_k$, we have that
\begin{align*}
    \frac{N_k'}{n_k}= \frac{N_k'}{N_k'+2z\sqrt{\frac{N_k'}{\mu_k^2}}}\rightarrow 1,
\end{align*}
almost surely holds.

By Lindeberg's CLT, $\bbP\left(-
    \frac{S_{n_k,k}-\mu_k n_k}{\sqrt{n_k}}
    <\mu_k\sqrt{n_k}-\sqrt{-2\log\alpha_k \log n_k}
    |\mathcal{F}_k\right)\rightarrow \Phi_\sigma(z)$. Note that $n_k\rightarrow \infty$, the term $ \bbP\left(S_{n_k,k}<-\sqrt{-2n_k\log n_k\log \alpha_k}\right)$ is negligible by Lemma~\ref{lemma:false sign}.
 
Then we turn to establish that the second term $T_2$ is also negliable as folllows:
\begin{align*}
T_2 
    & =
    \bbP\left(\max\limits_{1\leq n\leq n_k-1}
    \frac{|S_{n,k}|}{\sqrt{n\log n}}\geq \sqrt{-2\log \alpha_k},\left|S_{n_k,k}\right|<\sqrt{-2 n_k\log n_k\log \alpha_k}\right)\\
    & \leq 
    \bbP\left(\sum\limits_{n=N_k}^{n_k}X_{n,k}\leq \sqrt{-2\log \alpha_k}\left(\sqrt{n_k\log n_k}-\sqrt{N_k\log N_k}\right)\right) +o(1)\\
    & \leq \bbP
    \left(
  \sum\limits_{n=N_k}^{n_k} \xi_{n,k}
  \leq \left(\left(
  \frac{\sqrt{-2\log \alpha_k}\left(\log \bbE N_k + 1\right)}
  {2\sqrt{\bbE N_k \log \bbE N_k}}\left(1 + o(1)\right)-\mu_k\right)(n_k -N_k)\right)
    \right),
\end{align*}
which converges to $0$ by Proposition~\ref{thm:EN}
\begin{align*}
    \frac{\sqrt{-2\log \alpha_k}\left(\log \bbE N_k + 1\right)}
  {2\sqrt{\bbE N_k \log \bbE N_k}}
  \lesssim \frac{1}{2}\mu_k
\end{align*}
as $k\rightarrow \infty$.

By definition of $N_k'$,
we obtain that 
\begin{align*}
    \frac{N_k-N'_k}{N'_k}=\frac{1}{\sqrt{-2\log \alpha_k\log  N'_k }}\cdot 
    \frac{N_k-N'_k}{\frac{1}{|\mu_k|}\sqrt{ N_k'}}
\end{align*}
as $k\rightarrow \infty$ and $\bbE N_k\rightarrow \infty$.

Then we turn to establish~\eqref{eq:RSA2}. By definition, if $\mu_k>0$, with probability of $1-o(1)$,
\begin{align*}
     \sqrt{N_k}\left(M_k - \mu_k\right)
     &  = \frac{1}{\sqrt{N_k}}
     \left(\sqrt{-2\log \alpha_k N_k\log N_k}-\mu_k N_k\right)+ R_k\\
     & = \sqrt{-2\log \alpha_k \log N_k}-\mu_k \sqrt{N_k} + R_k,
\end{align*}
where the remainder term $R_k = \frac{1}{\sqrt{N_k}}\left(
\sum\limits_{i=1}^{N_k} Y_{i,k}-\sqrt{-2\log \alpha_k N_k \log N_k}
\right)$ remains nonnegative by stopping criterion.

We first show that $  \sqrt{-2\log \alpha_k \log N_k}-\mu_k \sqrt{N_k}\Rightarrow \mathcal{N}(0,1)$
\begin{align*}
     & \sqrt{-2\log \alpha_k \log N_k} - \mu_k\sqrt{N_k}\\
    = & \sqrt{-2\log \alpha_k \log N'_k}\left(1 + \frac{1}{2\log N'_k}\log \frac{N_k}{N'_k} + o\left(\frac{1}{\log N'_k}\log \frac{N_k}{ N'_k}\right)\right)- \mu_k \sqrt{N_k}\\
    = & \sqrt{-2\log \alpha_k \log  N'_k}-\mu_k \sqrt{N_k} + \mathcal{O}_P\left(\sqrt{\frac{-2\log \alpha_k}{\log \bbE N_k}}\log \frac{N_k}{\bbE N_k}\right).
\end{align*}
Since for some $C>0$,
\begin{align*}
    \sqrt{\frac{-2\log \alpha_k}{\log  N'_k}}\log \frac{N_k}{N'_k}
    & \lesssim
     C\sqrt{\frac{-2\log \alpha_k}{\log N'_k}}
     \left|1 - \frac{N_k}{N'_k}\right| = o_P(1),
\end{align*}
it suffices to focus on the difference between the first two terms as follows.
\begin{align*}
    \mu_k \sqrt{N_k} 
    & = \mu_k \frac{N_k}{\sqrt{N'_k}} + \mu_k\left(\sqrt{N_k}-\frac{N_k}{\sqrt{N'_k}}\right)\\
    & = \mu_k \frac{N_k}{\sqrt{ N'_k}} + \mu_k\sqrt{N'_k}\left(\sqrt{\frac{N_k}{N'_k}}-\frac{N_k}{N'_k}\right)\\
    & = \mu_k \frac{N_k}{\sqrt{ N'_k}} -\left(\frac{\mu_k\sqrt{N'_k}}{2}\left(\frac{N_k}{N'_k}-1 + o_P\left(\frac{N_k}{N'_k}-1 \right)\right)\right)\\
    & = \frac{1}{2}\mu_k \frac{N_k}{\sqrt{N'_k}} + \frac{1}{2}\mu_k \sqrt{N'_k} + o_P(1),
\end{align*}
yielding the desired asymptotic normality if the remainder term $R_k = o(1)$.
\begin{align*}
    0\leq R_k \leq \frac{1}{\sqrt{N_k}} Y_{N_k,k}.
\end{align*}
Then we conclude $Y_{N_k,k}$ via a crude but sufficient estimate by choosing $q = 2$
\begin{align*}
    \bbE\left(Y_{N_k,k}\right)
    \leq \bbE\left(Y_{N_k,k}^q\right)^{1/q}\leq
    \left(\bbE\left(\sum\limits_{i=1}^{N_k} Y_{i,k}^q\right)\right)^{1/q}
    =
    \left(\bbE N_{k} \bbE(X_1)^q\right)^{1/q},
\end{align*} 
where the last equality is due to Wald's identity.
\end{proof}
Immediately, we obtain the following Corollary.
\begin{corollary}
\label{corollary:ratio_in_p}
Let $N_k'$ be defined as Theorem \ref{thm:RSA}. Then we have 
\begin{align}
    \frac{N_k}{N_k'}\overset{P}{\rightarrow} 1.
\end{align}
\end{corollary}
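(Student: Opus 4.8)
The plan is to derive the corollary directly from the first conclusion \eqref{eq:RSA1} of Theorem~\ref{thm:RSA}, reducing the claim $N_k/N_k' \overset{P}{\to} 1$ to the single quantitative fact that $\mu_k^2 N_k' \to \infty$. Writing the target equivalently as $\tfrac{N_k - N_k'}{N_k'} \overset{P}{\to} 0$, I would first invoke \eqref{eq:RSA1}, which shows that $\tfrac{N_k-N_k'}{2\sqrt{N_k'/\mu_k^2}}$ converges in distribution and is therefore $\mathcal{O}_P(1)$. Rearranging gives $N_k - N_k' = \mathcal{O}_P\bigl(\sqrt{N_k'}/|\mu_k|\bigr)$, and dividing through by $N_k'$ yields
\begin{align*}
\frac{N_k - N_k'}{N_k'} = \mathcal{O}_P\!\left(\frac{1}{|\mu_k|\sqrt{N_k'}}\right) = \mathcal{O}_P\!\left(\frac{1}{\sqrt{\mu_k^2 N_k'}}\right).
\end{align*}
Thus it suffices to show that $\mu_k^2 N_k' \to \infty$ (noting that $\mu_k$, and hence $N_k'$, may be random).

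The second step establishes $\mu_k^2 N_k' \to \infty$ from the defining relation of $N_k'$. By construction $N_k'$ solves $N_k'/\log(N_k'+1) = \delta_k$ with $\delta_k = -2\sigma^2\mu_k^{-2}\log\alpha_k$, so that $N_k' = \delta_k\log(N_k'+1)$ and consequently
\begin{align*}
\mu_k^2 N_k' = \mu_k^2\,\delta_k\,\log(N_k'+1) = -2\sigma^2(\log\alpha_k)\,\log(N_k'+1).
\end{align*}
Since $\alpha_k = \alpha 2^{-k}$, we have $-\log\alpha_k = k\log 2 - \log\alpha \asymp k$, while $\log(N_k'+1) \geq \log 2 > 0$; hence $\mu_k^2 N_k' \gtrsim k \to \infty$. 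Combining with the previous display gives $\tfrac{N_k-N_k'}{N_k'} = \mathcal{O}_P(k^{-1/2}) = o_P(1)$, which is the desired conclusion.

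The argument is short precisely because the normalization in \eqref{eq:RSA1} already carries the factor $\mu_k^{-1}$, so no delicate re-centering is required. The only point demanding care is that $\mu_k$ is $\mathcal{F}_{k-1}$-measurable and need not tend to zero, so that $\delta_k$ and $N_k'$ could in principle stay bounded along some sample paths; the observation that rescues the argument is that $\mu_k^2\delta_k = -2\sigma^2\log\alpha_k$ is \emph{free of} $\mu_k$ entirely. Consequently the lower bound $\mu_k^2 N_k' \gtrsim k$ holds on every sample path, uniformly in the value of $\mu_k$. This cancellation is the main (and essentially the only) obstacle, and it dissolves as soon as the defining equation for $N_k'$ is used to eliminate $\mu_k$.
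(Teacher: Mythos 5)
Your proposal is correct, and its skeleton coincides with the paper's own proof of Corollary~\ref{corollary:ratio_in_p}: both arguments reduce the claim to tightness of the normalized difference in \eqref{eq:RSA1} together with divergence of the normalizing factor $|\mu_k|\sqrt{N_k'}$ at rate $\sqrt{k}$. Where you differ is in how that divergence is justified. The paper lower-bounds $\tfrac{1}{2}|\mu_k|\sqrt{N_k'}\,\varepsilon$ by $c\sqrt{k}\varepsilon$ by appealing to the asymptotic order $N_k'\asymp \frac{k}{\mu_k^2}\log\bigl(\frac{k}{\mu_k^2}\bigr)$ (inherited from Proposition~\ref{thm:EN}) combined with the almost-sure convergence result of Lemma~\ref{lemma:almost surely}. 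You instead eliminate $\mu_k$ exactly: from the defining equation $N_k'/\log(N_k'+1)=\delta_k$ with $\delta_k=-2\sigma^2\mu_k^{-2}\log\alpha_k$, you obtain the identity $\mu_k^2 N_k' = -2\sigma^2(\log\alpha_k)\log(N_k'+1)\geq c\,k$, valid path-wise because $N_k'>1$ and $-\log\alpha_k\asymp k$. This is a genuine, if local, improvement: the lower bound is deterministic and uniform in the random, $\mathcal{F}_{k-1}$-measurable $\mu_k$, so you need neither Lemma~\ref{lemma:almost surely} nor any behavior of $\mu_k$ along sample paths, and the proof becomes self-contained given \eqref{eq:RSA1}. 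The remaining steps — convergence in distribution implies $\mathcal{O}_P(1)$, and dividing an $\mathcal{O}_P(1)$ quantity by a deterministic divergent bound gives $o_P(1)$ — are standard and correctly executed.
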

\begin{proof}
By checking the definition: for some $\varepsilon>0$, we obtain that
\begin{align*}
    \lim\limits_{k\rightarrow \infty}\bbP\left(\left|\frac{N_k}{N'_k}-1\right|>\varepsilon\right)
    & = \lim\limits_{k\rightarrow \infty}\bbP\left(\left|\frac{N_k - N_k'}{\frac{2}{|\mu_k|}\sqrt{N_k'}}\right|>\frac{1}{2}|\mu_k|\sqrt{N_k'}\varepsilon\right)
    \\
    &\leq 
    \lim\limits_{k\rightarrow \infty}\bbP\left(\left|\frac{N_k - N_k'}{\frac{2}{|\mu_k|}\sqrt{N_k'}}\right|>c\sqrt{k}\varepsilon\right) = 0,
\end{align*}
by combining with a.s.-convergence result stated in Lemma \ref{lemma:almost surely} and recall $N_k'\asymp \frac{k}{\mu_k^2}\log\left(\frac{k}{\mu_k^2}\right)$.

\end{proof}
\section{Proof of Theorem \ref{thm:CLT_smooth}}
\label{section:appendix smooth}
Recall that the total sample size used in the first $k$ stages is defined as $n = \sum_{t=1}^{k-1}N_t^\sharp
 + N_k$.
 For each stage $1\le t\le k$, let
\begin{align*}
    \ell[t]& =\inf\left\{\widetilde{t}\leq t:X_{\ell \widetilde{t}}=X_{\ell t}\right\}\quad\text{and}\qquad
     r[t] =\inf\left\{\widetilde{t}\leq t :X_{r \widetilde{t}}=X_{r t}\right\},
\end{align*}
Accordingly, we define the normalized errors collected at the endpoints of interval $\mathcal{I}_{t}$
\begin{align*}
\bar{\varepsilon}_{\ell[t]}=\frac{1}{N_{\ell[t]}}\sum_{i=1}^{N_{\ell[t]}}\varepsilon_{i,\ell[t]},\qquad
\bar{\varepsilon}_{r[t]}=\frac{1}{N_{r[t]}}\sum_{i=1}^{N_{r[t]}}\varepsilon_{i,r[t]}.
\end{align*}
\begin{proposition}
\label{prop:asymptotic expansion}
Under Assumptions~\ref{assumption:regularity} and \ref{assumption:differentiable}, the estimator $X_{k+1}$ admits the following asymptotic linear expansion:
\begin{align*}
    X_{k+1}-\theta = -\frac{\bar{\varepsilon}_{k}}{\beta} + o_P\left(N_k^{-\frac{1}{2}}\right).
\end{align*}
\end{proposition}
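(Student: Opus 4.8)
The plan is to begin from the weight-section formula \eqref{eq:update} evaluated at stage $k$ and to peel off the leading stochastic term $-\bar\varepsilon_k/\beta$, showing that everything else is $o_P(N_k^{-1/2})$. Writing
\[
X_{k+1}-\theta = \frac{\widehat f_{\ell k}(X_{rk}-\theta) - \widehat f_{rk}(X_{\ell k}-\theta)}{\widehat f_{\ell k} - \widehat f_{rk}},
\]
I would first record, for each endpoint, the decomposition $\widehat f_{\cdot k} = f(X_{\cdot k}) + (\text{noise average})$, where the noise average is $\bar\varepsilon_{\cdot[k]}$ at the fresh \textsc{StageSampling} endpoint (the one with $\cdot[k]=k$) and $\bar\varepsilon_{\cdot[k]}^\sharp$ at the refined endpoint. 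Exactly one of $\ell[k]=k$ or $r[k]=k$ holds, so $\bar\varepsilon_k$ always denotes the noise average at the freshly queried point $X_k$.

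The first substantive step is to control the denominator. I would show that at every endpoint the noise is negligible relative to the signal, i.e. $\widehat f_{\cdot k} = f(X_{\cdot k})(1+o_P(1))$. This follows because the stopping rule \eqref{eq:criterion}--\eqref{eq:boundary} forces $|f(X_k)| \asymp T(N_k,\alpha_k)/N_k \asymp \sigma\sqrt{-2\log(N_k+1)\log\alpha_k/N_k}$, whereas Theorem~\ref{thm:RSA} gives $\bar\varepsilon_k = O_P(N_k^{-1/2})$; their ratio is $O_P\bigl((\log(N_k+1)\log\alpha_k)^{-1/2}\bigr)=o_P(1)$, and the refined endpoint carries even less noise since $N_k^\sharp = \log_2(k+1)\,N_k$. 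Combining this with Assumption~\ref{assumption:differentiable} ($f(x)=\beta(x-\theta)(1+o(1))$) and with Lemma~\ref{lemma:a.s. correct}, which guarantees $\widehat f_{\ell k}<0<\widehat f_{rk}$ (so there is no cancellation in the difference) and $|X_{\ell k}-X_{rk}|\ge |X_{\cdot k}-\theta|$, yields $\widehat f_{\ell k}-\widehat f_{rk} = \beta(X_{\ell k}-X_{rk})(1+o_P(1))$.

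Next I would Taylor-expand the numerator and carry out the ``subtract-and-add'' algebra that converts \eqref{eq:sketch2} into \eqref{eq:sketch3}: using $\frac{X_{r[k]}-\theta}{X_{\ell[k]}-X_{r[k]}} = -1 + \frac{X_{\ell[k]}-\theta}{X_{\ell[k]}-X_{r[k]}}$ (and its mirror image) one extracts the $-\bar\varepsilon_k$ term and repackages the remainder as $C_k^{(\ell)}\mathbbm 1\{\ell[k]=k\}+C_k^{(r)}\mathbbm 1\{r[k]=k\}$, giving
\[
X_{k+1}-\theta = \frac{1}{\beta}\Bigl(-\bar\varepsilon_k + C_k^{(\ell)}\mathbbm 1\{\ell[k]=k\} + C_k^{(r)}\mathbbm 1\{r[k]=k\}\Bigr)(1+o_P(1)).
\]
Finally I would invoke Proposition~\ref{prop:small term} to obtain $C_k^{(\ell)}\mathbbm 1\{\ell[k]=k\}=o_P(N_k^{-1/2})$ and likewise for $C_k^{(r)}$, and absorb the residual multiplicative $(1+o_P(1))$ factor using $\bar\varepsilon_k=O_P(N_k^{-1/2})$, since $O_P(N_k^{-1/2})\cdot o_P(1)=o_P(N_k^{-1/2})$. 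This delivers $X_{k+1}-\theta = -\bar\varepsilon_k/\beta + o_P(N_k^{-1/2})$.

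I expect the main obstacle to be the denominator control together with the uniform handling of the higher-order Taylor remainder $o(|X_{\cdot k}-\theta|^\nu)$: one must argue that the noise-to-signal ratio at each endpoint vanishes in probability (so that $\widehat f_{\ell k}-\widehat f_{rk}$ does not degenerate and the multiplicative factor is genuinely $1+o_P(1)$), and that the remainder, once divided by $\beta(X_{\ell k}-X_{rk})$, remains $o_P(N_k^{-1/2})$. Both require feeding the orders of $N_k$ and $|f(X_k)|$ from Proposition~\ref{thm:EN} and Theorem~\ref{thm:RSA} back into the ratio; the heavier cross-endpoint cancellation is already delegated to Proposition~\ref{prop:small term}, so the remaining work is careful bookkeeping of these magnitudes.
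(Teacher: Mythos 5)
Your proposal is correct and follows essentially the same route as the paper's proof of Proposition~\ref{prop:asymptotic expansion}: expand the weight-section formula using the Taylor expansion of Assumption~\ref{assumption:differentiable}, carry out the subtract-and-add algebra to reach the representation $X_{k+1}-\theta=\frac{1}{\beta}\bigl(-\bar{\varepsilon}_k+C_k^{(\ell)}\mathbbm{1}\{\ell[k]=k\}+C_k^{(r)}\mathbbm{1}\{r[k]=k\}\bigr)(1+o_P(1))$, and delegate the cross-endpoint terms to Proposition~\ref{prop:small term}. Your explicit noise-to-signal control of the denominator via the stopping boundary (namely $|f(X_{\cdot k})|\gtrsim T(N_k,\alpha_k)/N_k\gg N_k^{-1/2}$, so $\widehat f_{\ell k}-\widehat f_{rk}=\beta(X_{\ell k}-X_{rk})(1+o_P(1))$) is a more careful rendering of what the paper absorbs into its $(1+o(1))$ factors by invoking consistency, but it is the same argument in substance.
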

\begin{proof}
Since at stage $k$ exactly one of $\ell_k$ or $r_k$ equals $k$, we can, without loss of generality, assume 
$r_k = k$. Equivalently, at stage $k$, we update the left endpoint by setting 
$X_{r,k} \leftarrow X_k$. In the following derivation, we assume $r[k]=k$ while the complete version works for the other case $\ell[k]=k$. By weight-section formula~\eqref{eq:update} and elementary algebra,
\begin{align*}
 & X_{k+1}-\theta= \frac{\widehat{f}_{\ell k}\, X_{rk} - \widehat{f}_{rk}\, X_{\ell k}}{\widehat{f}_{\ell k} - \widehat{f}_{rk}}-\theta\\ 
   = & \frac{\beta\theta\Big(X_{\ell k}-X_{rk}\Big) + X_{rk}\cdot o(X_{\ell k}-\theta)-X_{\ell k}\cdot o(X_{rk}-\theta) + \Bar{\varepsilon}_{\ell k}^\sharp X_{rk}-\Bar{\varepsilon}_{rk}X_{\ell k}}{\beta(X_{\ell k}-X_{rk})+o(X_{\ell k}-\theta)-o(X_{rk}-\theta) + \Bar{\varepsilon}_{\ell k}^\sharp-\Bar{\varepsilon}_{rk}}-\theta \\
   &\quad [\because\text{Taylor expansion under Assumption \ref{assumption:differentiable}}]\\
   = &\theta\Big(\frac{1}{1 + \frac{o(X_{\ell k}-\theta)-o(X_{rk}-\theta) + \Bar{\varepsilon}_{\ell k}^\sharp-\Bar{\varepsilon}_{rk}}{\beta(X_{\ell k}-X_{rk})}}
    -1\Big)
   + \frac{\Bar{\varepsilon}_{\ell k}^\sharp X_{rk}-\Bar{\varepsilon}_{rk}X_{\ell k}}{\beta(X_{\ell k}-X_{rk})+o(X_{\ell k}-\theta)-o(X_{rk}-\theta) + \Bar{\varepsilon}_{\ell k}^\sharp-\Bar{\varepsilon}_{rk}}
   \\
   + &  \frac{X_{rk}\cdot o(X_{\ell k}-\theta)-X_{\ell k}\cdot o(X_{rk}-\theta)}{\beta(X_{\ell k}-X_{rk})+o(X_{\ell k}-\theta)-o(X_{rk}-\theta) + \Bar{\varepsilon}_{\ell k}^\sharp-\Bar{\varepsilon}_{rk}}\\
   =& -\theta \bigg(\frac{o(X_{\ell k}-\theta)-o(X_{rk}-\theta)+\Bar{\varepsilon}_{\ell k}^\sharp-\Bar{\varepsilon}_{rk}}{\beta(X_{\ell k}-X_{rk})}\Big.\\
   \Big.-&\frac{o(X_{\ell k}-\theta)-o(X_{rk}-\theta)+\Bar{\varepsilon}_{\ell k}^\sharp-\Bar{\varepsilon}_{rk}}{\beta(X_{\ell k}-X_{rk})}
   \cdot 
   \frac{o(X_{\ell k}-\theta)-o(X_{rk}-\theta)+\Bar{\varepsilon}_{\ell k}^\sharp-\Bar{\varepsilon}_{rk}}{\beta(X_{\ell k}-X_{rk}) + o(X_{\ell k}-\theta)-o(X_{rk}-\theta)+\Bar{\varepsilon}_{\ell k}^\sharp-\Bar{\varepsilon}_{rk}}\bigg)\\
   + &   \bigg(\frac{\Bar{\varepsilon}_{\ell k}^\sharp X_{rk}-\Bar{\varepsilon}_{rk}X_{\ell k}}{\beta\Big(X_{\ell k}-X_{rk}
\Big)}-
\frac{\Big(\Bar{\varepsilon}_{\ell k}^\sharp X_{rk}-\Bar{\varepsilon}_{rk}X_{\ell k}\Big)\Big(o(X_{\ell k}-\theta)-o(X_{rk}-\theta)+\Bar{\varepsilon}_{\ell k}^\sharp-\Bar{\varepsilon}_{rk}\Big)}{\Big(\beta(X_{\ell k}-X_{rk})\Big)\Big(\beta(X_{\ell k}-X_{rk})+o(X_{\ell k}-\theta)-o(X_{rk}-\theta) + \Bar{\varepsilon}_{\ell k}^\sharp-\Bar{\varepsilon}_{rk}\Big)}\bigg)\\
+ & \Bigg(\frac{X_{rk} o(X_{\ell k}-\theta)-X_{\ell k} o(X_{rk}-\theta)}{\beta(X_{\ell k}-X_{rk})}\\
- & 
\frac{\Big(X_{rk}\cdot o(X_{\ell k}-\theta)-X_{\ell k}\cdot o(X_{rk}-\theta)\Big)\Big(o(X_{\ell k}-\theta)-o(X_{rk}-\theta)+\Bar{\varepsilon}_{\ell k}^\sharp-\Bar{\varepsilon}_{rk}\Big)}{\Big(\beta(X_{\ell k}-X_{rk})\Big)\Big(\beta(X_{\ell k}-X_{rk})+o(X_{\ell k}-\theta)-o(X_{rk}-\theta) + \Bar{\varepsilon}_{\ell k}^\sharp-\Bar{\varepsilon}_{rk}\Big)}\Bigg).
\end{align*}
%We proceed by a numeric inequality: $a^\nu  + b^\nu \leq (a+b)^\nu \leq 2^{\nu - 1}\left(a^\nu + b^\nu\right)$, yielding that
%\begin{align*}
%       &s
%\end{align*}
Therefore, we obtain that 
\begin{align*}
    X_{k+1}-\theta
    & = 
    \frac{1}{\beta(X_{\ell k}-X_{rk})}
    \Big(\Bar{\varepsilon}_{\ell k}^\sharp(X_{rk}-\theta)
    -
    \Bar{\varepsilon}_{rk}(X_{\ell k}-\theta) \\
    & + 
    (X_{rk}-\theta)\cdot o(X_{\ell k}-\theta)
    -
    (X_{\ell k}-\theta)\cdot o(X_{rk}-\theta)
\Big) \left(1+o(1)\right)\\
& = \left(\frac{X_{rk}-\theta}{\beta(X_{\ell k}-X_{rk})}\Bar{\varepsilon}_{\ell k}^\sharp -\frac{X_{\ell k}-\theta}{\beta(X_{\ell k}-X_{rk})}\Bar{\varepsilon}_{rk}\right)(1+o(1)).
\end{align*}

By the established consistency of $\{X_{k}\}_{k=1}^\infty$, we obtain that
\begin{align}
\notag
X_{k+1}-\theta
= & \frac{\widehat{f}_{\ell k}\, X_{rk} - \widehat{f}_{rk}\, X_{\ell k}}{\widehat{f}_{\ell k} - \widehat{f}_{rk}}-\theta\\
\notag
= & \left(\frac{X_{r[k]}-\theta}{\beta(X_{\ell[k]}-X_{r[k]})}\Bar{\varepsilon}_{\ell[k]} -\frac{X_{\ell[k]}-\theta}{\beta(X_{\ell[k]}-X_{r[k]})}\Bar{\varepsilon}_{r[k]}^\sharp\right)\mathbbm 1\big\{\ell[k]=k\big\}(1+o_P(1))\\
\notag
+&\left(\frac{X_{r[k]}-\theta}{\beta(X_{\ell [k]}-X_{r[k]})}\Bar{\varepsilon}_{\ell[k]}^\sharp -\frac{X_{\ell [k]}-\theta}{\beta(X_{\ell [k]}-X_{r[k]})}\Bar{\varepsilon}_{r[k]}\right)\mathbbm 1\big\{r[k]=k\big\}(1+o_P(1))
\\
\notag
= & 
\frac{1}{\beta}\Big(-\bar{\varepsilon}_k + C_k^{(\ell)}\mathbbm{1}\big\{\ell[k] =k\big\} +  C_k^{(r)}\mathbbm{1}\big\{r[k]=k\big\}\Big)(1 + o_P(1)).
\end{align}
By Proposition~\ref{prop:small term}, the term $
N_k^{1/2}\big|C_k^{(r)}\big|\mathbbm 1\big\{r[k]=k\big\}
\vee
N_k^{1/2}\big|C_k^{(\ell)}\big|\mathbbm 1\big\{\ell[k]=k\big\}$ is negligible, thus concluding the proof.
 
\end{proof}
\begin{lemma}
\label{lemma:number smooth}
Assume Assumptions~\ref{assumption:regularity} and
\ref{assumption:differentiable} hold.
Let $\{N_{tk}^\sharp\}_{1\le t\le k}$ be the total number of
observations that \textsc{SPRB} has collected at the design points
$\{X_t\}_{1\le t\le k}$ by stage~$k$.  Set $n \;=\; \sum_{t=1}^{k-1} N_{t}^\sharp + N_k$. Then, as $k\to\infty$,
\[
\frac{n}{N_k} \;\xrightarrow{\,P\,} 1 .
\]
\end{lemma}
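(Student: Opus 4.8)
The plan is to show that the refinement counts $\{N_t^\sharp\}_{t<k}$ are asymptotically negligible compared with the terminal count $N_k$. Writing $q_k:=\sum_{t=1}^{k-1}N_t^\sharp/N_k$, the assertion $n/N_k\xrightarrow{P}1$ is exactly $q_k\xrightarrow{P}0$. Two per-stage facts drive everything. First, Proposition~\ref{prop:asymptotic expansion} together with Theorem~\ref{thm:RSA} gives the two-sided control $N_{k-1}(X_k-\theta)^2=\Theta_P(1)$; in particular $(X_k-\theta)^2=\mathcal{O}_P(N_{k-1}^{-1})$. Second, Proposition~\ref{thm:EN} and Corollary~\ref{corollary:ratio_in_p} give $N_k\asymp_P\delta_k\log(\delta_k+1)$ with $\delta_k\asymp k/f(X_k)^2\asymp k/\bigl(\beta^2(X_k-\theta)^2\bigr)$ under Assumption~\ref{assumption:differentiable}, where I use $-\log\alpha_k\asymp k$.

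Substituting $(X_k-\theta)^2\asymp N_{k-1}^{-1}$ into $\delta_k$ yields the growth recursion $N_k\asymp_P kN_{k-1}\log(kN_{k-1})$. To make the logarithmic factor usable I would first bootstrap a crude polynomial lower bound: the uniform bound $|f|\le B$ already forces $N_k\gtrsim_P\delta_k\gtrsim_P k$, and feeding a hypothesis $N_{k-1}\gtrsim_P(k-1)^{j}$ back through the recursion upgrades the exponent to $N_k\gtrsim_P k^{\,j+1}$. Iterating over fixed $j$ shows that for every $d$ one has $\mathbb{P}(N_k\ge k^d)\to1$, hence $\log N_{k-1}/\log k\xrightarrow{P}\infty$. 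Consequently the per-stage ratio satisfies $N_k/N_{k-1}\asymp_P k\log(kN_{k-1})$, which diverges faster than any power of $k$; that is, $\{N_k\}$ grows super-exponentially in probability.

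Given super-exponential growth the sum $\sum_{t<k}N_t^\sharp$ is dominated by its last term: since $N_t^\sharp=\log_2(t+1)N_t$ and $N_{t-1}/N_t\ll1/t\to0$, the sequence $\{N_t^\sharp\}$ is eventually super-geometric, so $\sum_{t=1}^{k-1}N_t^\sharp\le2N_{k-1}^\sharp$ for large $k$, whence $q_k\le2N_{k-1}^\sharp/N_k=2\log_2(k)\,N_{k-1}/N_k\lesssim\log_2(k)/\bigl(k\log(kN_{k-1})\bigr)\to0$. Equivalently, one may iterate the exact identity $q_{k+1}=\tfrac{N_k}{N_{k+1}}\bigl(q_k+\log_2(k+1)\bigr)$, which contracts $q$ by the factor $N_k/N_{k+1}=o_P\bigl(1/(k\log k)\bigr)$ at each step; combined with tightness $q_k=\mathcal{O}_P(1)$ (itself a consequence of the polynomial bootstrap), this forces $q_{k+1}\xrightarrow{P}0$.

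The main obstacle is the probabilistic bookkeeping across the super-exponentially many stages: the two driving facts hold only \emph{in probability} at each fixed stage, so the ``eventually super-geometric'' statement, which involves all stages $t\le k-1$ simultaneously, cannot be read off from a naive union bound. I would resolve this by upgrading the per-stage statements to \emph{summable} tail bounds, using the sub-Gaussian concentration of the stopped averages underlying Theorem~\ref{thm:RSA} and the nonasymptotic estimates of Proposition~\ref{thm:EN}, to show that the slow-growth events $A_k^{c}=\{N_k<c_k\,kN_{k-1}\}$ satisfy $\sum_k\mathbb{P}(A_k^{c})<\infty$ once the auxiliary threshold $C_k$ in $\{N_{k-1}(X_k-\theta)^2\le C_k\}$ is allowed to diverge slowly (for instance $C_k=\log^2k$, which keeps $e^{-c C_k}$ summable while degrading the growth factor only to $k/\log^2 k$). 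Borel--Cantelli then makes the super-exponential growth, and hence $q_k\to0$, hold almost surely, which is stronger than the required convergence in probability.
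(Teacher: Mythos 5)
Your core argument coincides with the paper's own proof. The paper works with the same ratio $\rho_k=\sum_{t=1}^{k-1}N_{tk}^{\sharp}/N_k$, derives the same one-step recursion $N_{k+1}\rho_{k+1}=N_k\rho_k+\log(k+1)\,N_{i_{k+1}}$ (note one small inaccuracy in your ``exact identity'': the stage-$(k+1)$ refinement is taken at the \emph{opposite} endpoint, whose count is $N_{i_{k+1}}$ for a random index $i_{k+1}\le k$, so the bound $N_{i_{k+1}}\le N_k$ itself only holds on a high-probability event), and feeds it with the same two per-stage inputs you identify: $N_k\,f(X_{k+1})^{2}=\mathcal{O}_P(1)$ from Theorem~\ref{thm:RSA} together with the linear expansion of $X_{k+1}-\theta$, and the lower bound $N_{k+1}\gtrsim \frac{k+1}{f(X_{k+1})^{2}}\log\bigl(\frac{k+1}{f(X_{k+1})^{2}}\bigr)$ from Proposition~\ref{thm:EN}, yielding $\log(k+1)N_k/N_{k+1}\le C'/k$ with probability $1-o(1)$.

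Where you diverge is exactly at the obstacle you flag in your last paragraph, and here the paper is lighter than either of your closing routes. It never upgrades the per-stage statements to almost-sure or summable form: it simply squares the recursion and applies Cauchy--Schwarz to obtain $\mathbb{E}(\rho_{k+1}^{2})\le 2C^{2}\bigl(\mathbb{E}\rho_k^{2}/((k+1)^{2}\log^{2}(k+1))+k^{-2}\bigr)$, a scalar recursion that forces $\mathbb{E}\rho_k^{2}\to 0$ and hence $\rho_k\xrightarrow{\,P\,}0$ with no simultaneous-in-$t$ control, no growth bootstrap, and no Borel--Cantelli (granting that the paper is itself brief about the contribution of the exceptional event when passing to expectations). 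Your route (a) (super-exponential growth plus last-term dominance) genuinely requires the summable-tail upgrade you sketch, and its inputs are not supplied by the results you cite: Theorem~\ref{thm:RSA} is purely distributional, so $N_{k-1}(X_k-\theta)^{2}=\Theta_P(1)$ carries no tail rate, and Proposition~\ref{thm:EN}'s lower bound is stated in expectation only; making $\sum_k\mathbb{P}(A_k^{c})<\infty$ rigorous would mean redoing time-uniform sub-Gaussian concentration for the stopped averages (in the spirit of Lemma~\ref{lemma:false sign} and the estimates inside the proof of Proposition~\ref{prop:small term}) --- plausible, but substantial deferred work. Your route (b) has a circularity: uniform-in-$k$ tightness of $q_k$ is itself an all-stages statement about $\sum_{t<k}N_t^{\sharp}/N_k$, which the polynomial bootstrap on $N_k$ alone does not deliver, so it cannot be assumed as an input to the one-step contraction; the paper's second-moment iteration is precisely the device that breaks this circle. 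In short: same skeleton and same key lemmas, morally correct, but the loop is closed with heavier and only-sketched machinery where a two-line moment recursion suffices.
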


\begin{proof}
As for different $k$, the number of samples collected at $\{X_t\}_{1\leq t\leq k-1}$ implicitly depends on $k$. In this proof, we denote $N_{t}^\sharp $ as $N_{tk}^\sharp$.
Define
\[
\rho_k
\;:=\;
\frac{1}{N_k}\sum_{t=1}^{k-1} N_{tk}^\sharp .
\]
Proving the lemma amounts to showing $\rho_k \to 0$ in probability.

\smallskip

Let $i_{k+1}$ be the index at which additional samples are
drawn during stage~$k+1$.  By construction,
\[
N_{k+1}\rho_{k+1}
=
\sum_{t=1}^{k} N_{t,k+1}^\sharp
=
N_k\rho_k + \log(k+1)\,N_{i_{k+1}},
\]
so that, with probability~$1-o(1)$,
\begin{equation}
\label{eq:recursive}
\rho_{k+1}
\;\le\;
\frac{N_k}{N_{k+1}}\rho_k
+
\frac{\log(k+1)\,N_k}{N_{k+1}} .
\end{equation}

\smallskip
For any $\varepsilon>0$,
\begin{align*}
\mathbb P\!\Bigl((k+1)\log(k+1)\,N_k > \varepsilon N_{k+1}\Bigr)
&\le
\mathbb P\!\Bigl(
       N_{k+1}
       < C\frac{k+1}{f(X_{k+1})^{2}}
         \log\!\Bigl(\tfrac{k+1}{f(X_{k+1})^{2}}\Bigr)
      \Bigr) \\
&\quad+
\mathbb P\!\Bigl(
       N_k f(X_{k+1})^{2}
       > \varepsilon c
         \frac{\log(k+1)}{\log\!\bigl(\tfrac{k+1}{f(X_{k+1})^{2}}\bigr)}
      \Bigr),
\end{align*}
and the right‐hand side converges to~$0$ by
Assumption~\ref{assumption:differentiable}, the argument used in
Lemma~\ref{lemma:almost surely}, and the consistency of
\textsc{SPRB}.  Hence
$\log(k+1)N_k/N_{k+1}<C'/k$ with probability~$1-o(1)$.

\smallskip
If a non-negative sequence $\{a_k\}$ satisfies
$a_{k+1} \le ((k+1)\log(k+1))^{-2} a_k + k^{-2}$, then $a_k\to0$.
Applying this fact to~\eqref{eq:recursive} gives, with probability
$1-o(1)$,
\[
\rho_{k+1}
\;\le\;
C\Bigl(
       \tfrac{\rho_k}{(k+1)\log(k+1)} + \tfrac{1}{k}
    \Bigr).
\]
By Cauchy–Schwarz,
\[
\mathbb E(\rho_{k+1}^{2})
\;\le\;
2C^{2}\Bigl(
          \tfrac{\mathbb E\rho_k^{2}}{(k+1)^{2}(\log(k+1))^{2}}
          + \tfrac{1}{k^{2}}
       \Bigr),
\]
and therefore $\rho_k \to 0$ in probability.
\end{proof}

\section{Proof of Theorem \ref{thm:jump point}}
\label{section:additional proof for jump point}
As a Corollary of Proposition~\ref{thm:EN}, we obtain that the order of $\bbE N_k$ is $\mathcal{O}(k\log k)$. 
We define $b_1:=-\max\limits_{x<\theta}f(x)$ and $b_2:=\min\limits_{x>\theta}f(x)$ as the minimal separation of the regression function $f(x)$ on both sides of the root $\theta$.
\begin{corollary}
Under Assumption \ref{assumption:regularity} and \ref{assumption:jump point}, there exists $0<c<C$ such that for all $t\in \bbN^+$,
\begin{align}
    \frac{ct}{B^2}\log\left(\frac{t}{B^2}\right)\leq \mathbb E N_t\leq \frac{C t}{b^2}
    \log\left(\frac{t}{b^2}\right),
\end{align}
where $b = b_1\wedge b_2$ and $\sup_{x\in \mathcal{I}}|f(x)|\leq B$.
\end{corollary}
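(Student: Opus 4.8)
The plan is to deduce the Corollary directly from Proposition~\ref{thm:EN} by replacing the $\mu_t$–dependent quantity $\delta_t = -2\sigma^2\mu_t^{-2}\log\alpha_t$ with two–sided bounds that depend only on the separation constant $b$ and the uniform bound $B$. Proposition~\ref{thm:EN} already supplies $c_{\mu_t}\,\delta_t\log(\delta_t+1) < \bbE N_t < C_{\mu_t}\,\delta_t\log(\delta_t+1)$ for all $t\ge K$, so the whole argument reduces to sandwiching $\delta_t\log(\delta_t+1)$ between $\tfrac{t}{B^2}\log(\tfrac{t}{B^2})$ and $\tfrac{t}{b^2}\log(\tfrac{t}{b^2})$ up to absolute constants, and to checking that the prefactors $c_{\mu_t},C_{\mu_t}$ remain uniformly bounded over the relevant range of $\mu_t$.

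First I would record the two–sided control on the signal. Under Assumption~\ref{assumption:jump point} together with the sign requirement in Assumption~\ref{assumption:regularity}, one has $f(x)\le -b_1$ for $x<\theta$ and $f(x)\ge b_2$ for $x>\theta$, so that $|f(X_t)|\ge b_1\wedge b_2 = b$ whenever $X_t\neq\theta$; combined with $\sup_{x\in\mathcal I}|f(x)|\le B$, this yields $b\le|\mu_t|\le B$ almost surely for every $t$. Since $\alpha_t=\alpha 2^{-t}$ gives $-\log\alpha_t = -\log\alpha + t\log 2 \asymp t$, substituting these bounds into the definition of $\delta_t$ produces absolute constants $c_1<c_2$ with $c_1\,\tfrac{t}{B^2}\le\delta_t\le c_2\,\tfrac{t}{b^2}$ for all large $t$ (the $\sigma^2$ and $\log 2$ factors being folded into $c_1,c_2$).

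Next I would push these bounds through the increasing map $x\mapsto x\log(x+1)$. Monotonicity gives $\delta_t\log(\delta_t+1)\ge c_1\tfrac{t}{B^2}\log(c_1\tfrac{t}{B^2}+1)$ and $\delta_t\log(\delta_t+1)\le c_2\tfrac{t}{b^2}\log(c_2\tfrac{t}{b^2}+1)$; since $\log(c_i\,s+1)=\log s+\mathcal{O}(1)\asymp\log s$ as $s\to\infty$, the logarithmic factors match $\log(t/B^2)$ and $\log(t/b^2)$ respectively up to constants. Multiplying by the (uniformly bounded) prefactors $c_{\mu_t},C_{\mu_t}$ then yields the stated inequalities for every $t\ge K$. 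The finitely many indices $t<K$ contribute only fixed, strictly positive and finite values of $\bbE N_t$, finiteness coming from the a.s.\ finiteness of the stopping time, so shrinking $c$ and enlarging $C$ extends the bound to all $t\in\bbN^+$.

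The main obstacle I anticipate is the uniformity of the prefactors $c_{\mu_t},C_{\mu_t}$: the ``furthermore'' clause of Proposition~\ref{thm:EN} only asserts $c_{\mu_t},C_{\mu_t}\to 1$ in the regime $\mu_t\to 0$, whereas in the jump setting $\mu_t$ stays bounded away from zero inside the compact interval $[b,B]$. I would therefore revisit the proof of Proposition~\ref{thm:EN} to confirm that its tail estimates furnish constants uniform over $\mu_t\in[b,B]$: the upper–bound constant is already stated there to be independent of $\mu$ and $k$, and the lower–bound choice $\lambda\asymp\mu^{-2}(-\log\alpha_k)\log(\cdots)$ depends on $\mu$ only through $\delta_t$, so the resulting constants are controlled once $\mu_t$ ranges over a fixed compact set away from the origin. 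Once this uniformity is in hand, the remaining steps are routine.
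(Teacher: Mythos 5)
Your proposal is correct and follows essentially the same route as the paper, which treats the corollary as an immediate consequence of Proposition~\ref{thm:EN}: since $b\le|\mu_t|\le B$ under the jump assumption and $-\log\alpha_t=-\log\alpha+t\log 2\asymp t$, one sandwiches $\delta_t\log(\delta_t+1)$ between $\tfrac{t}{B^2}\log(\tfrac{t}{B^2})$ and $\tfrac{t}{b^2}\log(\tfrac{t}{b^2})$ up to constants, and your check that $c_{\mu_t},C_{\mu_t}$ stay uniformly bounded for $\mu_t$ in the compact set $[b,B]$ correctly identifies the only point requiring care beyond the proposition's statement. One small refinement: for the finitely many $t<K$, finiteness of $\bbE N_t$ should be drawn from the exponential tail bound in the proof of Proposition~\ref{thm:EN} (valid since $|\mu_t|\ge b>0$) rather than from almost-sure finiteness of the stopping time, which alone does not imply integrability.
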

\begin{lemma}
\label{lemma:a.s._RL}
Suppose that the regression function \(f\) satisfies Assumptions~\ref{assumption:regularity} and \ref{assumption:jump point}. Then, the sequences \(\{r[k]\}_{k\in \bbN^+}\) and \(\{\ell[k]\}_{k\in \bbN^+}\) diverge almost surely. Precisely, with probability one,
\begin{align}
\lim_{k \to \infty} r[k] = \lim_{k \to \infty} \ell[k] =\infty.
\end{align}
    
\end{lemma}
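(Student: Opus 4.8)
The plan is to argue by contradiction, exploiting the feature that distinguishes the jump setting from the differentiable one: under Assumption~\ref{assumption:jump point} the regression function is bounded away from zero on both sides of $\theta$, so the slope estimates $\widehat f_{\ell k},\widehat f_{rk}$ cannot vanish and the convex weight $\omega_k=\widehat f_{\ell k}/(\widehat f_{\ell k}-\widehat f_{rk})$ appearing in the weight-section formula~\eqref{eq:update} stays strictly inside $(0,1)$. Since exactly one endpoint is refreshed at each stage once signs are read correctly, the negation of the lemma is that, beyond some finite (random) stage, only one endpoint — say the right one — is ever updated again. It therefore suffices to rule this scenario out; the symmetric case is identical. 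First I would restrict to the almost sure event of Lemma~\ref{lemma:a.s. correct}, fixing the random index $T$ past which $\mathrm{sign}(\widehat f_k)=\mathrm{sign}(f(X_k))$ and $\mathrm{sign}(f^\sharp_k)=\mathrm{sign}(f(X_k))$. On this event $\theta\in\mathcal I_k$ for $k\ge T$, the exceptional branch of \textsc{Update} never fires, and $\widehat f_{k+1}>0$ (resp.\ $<0$) reliably places $X_{k+1}$ to the right (resp.\ left) of $\theta$ and updates the corresponding endpoint.

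Suppose then that for all $k\ge L\ge T$ only the right endpoint moves. The left endpoint is frozen at a fixed value $x_\ell<\theta$, and because $X_{rk}=X_{k+1}$ at each such stage with $X_{k+1}>\theta$, the sequence $\{X_{rk}\}$ is non-increasing and bounded below by $\theta$. By Theorem~\ref{thm: consistency} we have $X_{k+1}\to\theta$, hence $X_{rk}\to\theta^+$. Next I would identify the limits of the two slope estimates. The frozen left endpoint accrues $N^\sharp$ fresh samples at every subsequent stage, so its cumulative sample count diverges and the strong law (in the stopped form underlying Lemma~\ref{lemma:almost surely}, via $\bar\varepsilon_{N_k}\to0$ a.s.) gives $\widehat f_{\ell k}\to f(x_\ell)$; by Assumption~\ref{assumption:jump point} together with the sign condition of Assumption~\ref{assumption:regularity} this limit is strictly negative, with $\lvert f(x_\ell)\rvert\ge\min(\psi_-,\mu_-)>0$. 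On the right, $X_{rk}\to\theta^+$ combined with $\bar\varepsilon_{N_{rk}}\to0$ a.s.\ yields $\widehat f_{rk}\to\mu_+>0$, which is also at most $B$. Consequently
\[
\omega_k=\frac{\lvert\widehat f_{\ell k}\rvert}{\lvert\widehat f_{\ell k}\rvert+\lvert\widehat f_{rk}\rvert}\;\longrightarrow\;\bar\omega:=\frac{\lvert f(x_\ell)\rvert}{\lvert f(x_\ell)\rvert+\mu_+}\in(0,1).
\]

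Writing the weight-section update as $X_{k+1}-\theta=\omega_k(X_{rk}-\theta)+(1-\omega_k)(x_\ell-\theta)$ and passing to the limit gives $X_{k+1}-\theta\to(1-\bar\omega)(x_\ell-\theta)<0$, so $X_{k+1}<\theta$ for all large $k$; but then $\widehat f_{k+1}<0$ and the \emph{left} endpoint must move, contradicting the standing assumption. The periodic bisection stages $t\in\mathcal T$ of Algorithm~\ref{alg:sprb} only reinforce the conclusion, since there $X_{k+1}=\tfrac12(x_\ell+X_{rk})\to\tfrac12(x_\ell+\theta)<\theta$. The symmetric argument excludes the possibility that only the left endpoint is eventually updated. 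Hence both endpoints are refreshed infinitely often, which is precisely $\ell[k]\to\infty$ and $r[k]\to\infty$ almost surely.

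The step I expect to be most delicate is justifying the passage to the limit in $\omega_k$: one needs the slope estimates to converge \emph{almost surely} rather than merely in probability, which requires coupling the a.s.\ vanishing of the stopped noise averages (Lemma~\ref{lemma:almost surely}) with the divergence $N_{\cdot k}\to\infty$ guaranteed by the growth $\mathbb E N_k=\mathcal O(k\log k)$ in the jump regime and the stopped central limit control of Theorem~\ref{thm:RSA}. A secondary but necessary bookkeeping point is the clean reduction to the two single-endpoint scenarios: one must verify that, once signs are correct past stage $T$, each stage updates exactly one endpoint, so that the failure of $\ell[k]\to\infty$ forces the left endpoint to be genuinely frozen (strictly on its side of $\theta$), which is what keeps $\bar\omega$ bounded away from $0$ and makes the contradiction airtight.
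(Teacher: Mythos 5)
Your proof is correct and is essentially the paper's argument in mirror image: the paper likewise restricts to the almost sure event of Lemma~\ref{lemma:a.s. correct}, decomposes the failure event into the countably many scenarios $\mathcal{J}_t$ in which one endpoint is frozen from some stage $t$ onward, and derives a contradiction from the fact that Assumption~\ref{assumption:jump point} together with Lemma~\ref{lemma:almost surely} keeps the weight $\omega_k$ bounded away from $\{0,1\}$. The only substantive difference in execution is that you invoke Theorem~\ref{thm: consistency} to send the moving endpoint to $\theta$ and pass to the exact limit $\bar\omega$, whereas the paper avoids consistency entirely, using only monotonicity and boundedness of the moving endpoint to force the increments $\omega_k(X_{rk}-X_{\ell k})$ to vanish, which contradicts the bound $0\ge \liminf_{k}\omega_k(X_{rt}-\theta)$ because that liminf is strictly positive on the frozen-endpoint event.
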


\begin{proof}
By Lemma \ref{lemma:a.s. correct}, without loss of generality, we consider  $k\geq T$ so that 
\begin{itemize}
    \item $\{X_k<\theta\}$ implies $\{\widehat{f}_{k}<0\text{ and }\widehat{f}_{k}^\sharp<0\}$;
\item  $\{X_k>\theta\}$ implies $\{\widehat{f}_{k}>0\text{ and }\widehat{f}_{k}^\sharp>0\}$.
\end{itemize}    
    The reason why we can assume this is by Kolmogorov's zero–one law.

By symmetry, it suffices to focus on showing the infiniteness of $r[k]$ as follows. We note that $\{r[k]\}_{k=1}^\infty$ is a non-decreasing positive integer sequence and 
\begin{align}
\notag
    \bbP\bigl(\lim\limits_{k\rightarrow \infty} r[k]<\infty\bigr)
    & = 
    \sum\limits_{t=1}^\infty 
    \bbP\bigl(\lim\limits_{k\rightarrow \infty} r[k]= t\bigr)\\
\label{eq:sum_of_events}
    & =\sum\limits_{t=1}^\infty 
    \bbP\bigl(\underbrace{\{r[t]=t\}\cap \left\{X_k<\theta,\forall k>t\vee T\right\}}_{=:\mathcal{J}_t}\bigr).
\end{align}
Then again by the almost surely statement on correct signs, we observe that $ X_k<\theta$ for any $k>t\vee T$ implies that 
%\begin{align*}
%   X_{L,i}-\theta <\frac{\widehat{f}_{L,i}}{\widehat{f}_{R,k}}(X_{R,i}-\theta).
%\end{align*}
%and given $\mathcal{J}_i$, for any $k>i$,
\begin{align*}
   0>X_{\ell,k+1} - \theta 
    & = \omega_k(X_{rk}-\theta) + (1-\omega_k) (X_{\ell k}-\theta) \\
    & = X_{\ell k}-\theta + \omega_k(X_{rk}-X_{\ell k})\\
    & \geq 
     X_{\ell k}-\theta + \omega_k(X_{rk}-\theta),
\end{align*}
where $\omega_k:=\frac{\widehat{f}_{\ell k}}{\widehat{f}_{\ell k}-\widehat{f}_{rk}}\in (0,1)$. If we take the limits on both sides of the preceding inequality, where the existence of limit is guaranteed by monotonicity and boundedness of $\{X_{\ell,k+1}\}_{k=1}^\infty$, we reach a contradiction. We obtain that, given $\mathcal{J}_t$, with probability one,
\begin{align*}
   0\geq \liminf\limits_{k\rightarrow \infty} \omega_k(X_{rt}-\theta)= \omega^*\left(X_{rt}-\theta\right), 
\end{align*}
where $\omega^*$ is defined as $\omega^* = \frac{\phi_{-}}{\phi_{-}+\widehat{f}_{ri}}>0$. The second inequality is due to Lemma \ref{lemma:almost surely} and Assumption \ref{assumption:jump point}. Thus, it implies that for all $i\in \bbN^+$, $\bbP(\mathcal{J}_t)=0$.
\end{proof}

We first provide an analogous version of Lemma \ref{lemma:number smooth} when the regression function $f$ is discontinuous at the root $\theta$. 
\begin{lemma}
\label{lemma:EN jump}
Under Assumption \ref{assumption:regularity} and \ref{assumption:jump point}, for any $\eta>0$, with high probability $1-o(1)$,
\begin{align}
   \frac{n}{k(\log(k+1))^{1+\eta} N_k}\leq 1.
\end{align}
As a consequence, with high probability $1-o(1)$, we obtain that
\begin{align}
    \frac{\sqrt{n}}{\sigma k\left(\log n\right)^{1 + \frac{\eta}{2}}}\leq 1.
\end{align}
\end{lemma}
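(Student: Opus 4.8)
The plan is to reduce both displayed inequalities to two order estimates---an upper bound on the total budget $n$ and a lower bound on the fresh stopping time $N_k$---which are far cleaner here than in the differentiable regime because the signal never collapses. Indeed, Assumption~\ref{assumption:jump point} together with condition~2 of Assumption~\ref{assumption:regularity} yields a deterministic separation $b_0\le|f(x)|\le B$ for every $x\in\mathcal I\setminus\{\theta\}$; hence $b_0\le|f(X_t)|\le B$ for all $t$, and the displayed Corollary of Proposition~\ref{thm:EN} applies with signal bounded away from zero, giving $\mathbb E[N_t\mid\mathcal F_{t-1}]\asymp t\log t$ uniformly in $t\ge K$ and therefore $\mathbb E N_t\asymp t\log t$.

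First I would establish the lower bound on the denominator. Since $b_0\le|\mu_k|\le B$, Lemma~\ref{lemma:almost surely} and hence Corollary~\ref{corollary:ratio_in_p} apply, so $N_k/N_k'\overset{P}{\to}1$; in the jump regime the deterministic level $N_k'$ solves $N_k'/\log(N_k'+1)=\delta_k$ with $\delta_k\asymp k$, whence $N_k'\asymp k\log k$ and consequently $N_k\ge c\,k\log k$ with probability $1-o(1)$. Next I would bound the numerator from above in expectation and convert to high probability by Markov's inequality with a slowly growing threshold $C_k=(\log k)^{\eta/2}$, yielding $n\le C_k\,\mathbb E n$ with probability $\ge 1-1/C_k=1-o(1)$. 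Writing $N_t^\sharp\asymp\log(t)\,N_{\cdot t}$ and summing the stagewise contributions, the target is $\mathbb E n\lesssim k^2(\log k)^2$, after which
\[
\frac{n}{k(\log(k+1))^{1+\eta}N_k}
\;\lesssim\;
\frac{C_k\,k^2(\log k)^2}{k(\log(k+1))^{1+\eta}\,k\log k}
\;\asymp\;C_k(\log k)^{-\eta}=(\log k)^{-\eta/2}\longrightarrow0,
\]
so the ratio is $\le1$ for all large $k$ on the good event, proving the first inequality (note that the factor $\sigma^2/\mu^2$ carried by both $\mathbb E n$ and $N_k$ cancels).

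The second inequality follows from the same two estimates. On the good event $\sqrt n\le\sqrt{C_k\,\mathbb E n}\lesssim\sqrt{C_k}\,\sigma\,k\log k$ (the constant $\sqrt{\mathbb E n}/(k\log k)$ being $\asymp\sigma$ up to a $\mu$-dependent factor), while $n\ge N_k\ge c\,k\log k$ forces $\log n\ge\log k$ eventually, so $\sigma k(\log n)^{1+\eta/2}\ge\sigma k(\log k)^{1+\eta/2}$. Dividing, the ratio is at most a $\mu$-dependent constant times $\sqrt{C_k}(\log k)^{-\eta/2}=(\log k)^{-\eta/4}\to0$, hence $\le1$ with probability $1-o(1)$; the explicit $\sigma$ in the statement is exactly what remains after the $\sigma$ in $\sqrt{\mathbb E n}$ cancels and the residual $\mu$-dependence is swallowed by the extra $(\log k)^{\eta/4}$.

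I expect the genuine difficulty to lie entirely in the expectation bound $\mathbb E n\lesssim k^2(\log k)^2$, or more precisely in controlling the accumulated refinement budget $\sum_{t<k}N_t^\sharp$. The subtlety is that in the jump regime the refined counts do \emph{not} satisfy the clean ordering $N_t^\sharp\ll N_{t+1}$ available under Assumption~\ref{assumption:differentiable}: when a single endpoint persists across a run of consecutive same-sign updates it is re-refined, so its count is multiplied by a factor $\asymp\log t$ at each step of the run. The hard part is therefore to bound the lengths of such runs---via the boundary-crossing probabilities of Lemma~\ref{lemma:false sign} and the geometric contraction of the weighted-section map---and to show that the resulting multiplicative accumulation stays within the $(\log k)^{\eta}$ slack that the statement deliberately leaves, which is precisely why the exponent is $1+\eta$ for every $\eta>0$ rather than a clean power. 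Once the refinement budget is under control, the remaining steps are routine order bookkeeping.
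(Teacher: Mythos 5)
Your proposal follows essentially the same route as the paper's proof: Markov's inequality applied to the total budget $n$, combined with the two-sided signal bound $b \le |f(X_t)| \le B$ so that Proposition~\ref{thm:EN} gives $\mathbb{E}N_t \asymp t\log t$ uniformly, and the replacement $N_k \asymp N_k' \asymp k\log k$ via Corollary~\ref{corollary:ratio_in_p}, yielding a failure probability of order $(\log k)^{-\eta}$ (your two-step Markov with the threshold $C_k = (\log k)^{\eta/2}$ is only a cosmetic variant of the paper's single application with threshold $(1+\varepsilon)\,k(\log(k+1))^{1+\eta}N_k'$). The ``genuine difficulty'' you anticipate---multiplicative accumulation of refinement counts when one endpoint persists over a run of same-sign updates---is in fact not treated in the paper's proof either: it directly bounds $\mathbb{E}\,n \le \sum_{j=1}^{k-1}\bigl(1+\log(j+1)\bigr)\mathbb{E}N_j + \mathbb{E}N_k$, i.e., exactly the per-stage single-logarithm bookkeeping you posited as the target, so your attempt matches the published argument in full.
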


\begin{proof}
By Corollary \ref{corollary:ratio_in_p}, we check the definition as follows: for any $\varepsilon>0$,
\begin{align*}
     & \bbP\left(\frac{n}{k(\log(k+1))^{1+\eta} N'_k}>1+\varepsilon\right)
   \\
   \leq &  
    \left(\left(1+\varepsilon\right)k(\log(k+1))^{1+\eta}N'_k\right)^{-1} \left[\sum\limits_{j=1}^{k-1} (1+\log(j+1))\mathbb E N_j 
    +\bbE N_k\right]\\
    \lesssim & 
      \left(\left(1+\varepsilon\right)k(\log(k+1))^{1+\eta})N'_k\right)^{-1} \left[
      \sum\limits_{j=1}^{k-1}
      (1+\log(j+1))\frac{j}{\left(f_{\max}\wedge f_{\min }\right)^2}\log\left(\frac{j}{\left(f_{\max}\wedge f_{\min }\right)^2}\right)\right.\\
     & \left.  + \frac{k}{\left(f_{\max}\wedge f_{\min }\right)^2}\log\left(\frac{k}{\left(f_{\max}\wedge f_{\min }\right)^2}\right)\right],
\end{align*}
where the last step is due to Proposition \ref{thm:EN}. Proceeding the estimate yields that 
\begin{align*}
    & \bbP\left(\frac{n_k}{k(\log(k+1))^{1+\eta} N'_k}>1+\varepsilon\right)\\
     \lesssim
     & \left(\left(1 + \varepsilon\right)k^2\left(\log(k+1)\right)^{2 + \eta}\right)^{-1}
     (f_{\max}\vee f_{\min})^2 
     \left[\frac{1}{f_{\max}\wedge f_{\min}} k^2\left(\log(k)\right)^2\right.\\
   +& \left.\frac{k}{\left(f_{\max}\wedge f_{\min }\right)^{2}}\log\left(\frac{k}{\left(f_{\max}\wedge f_{\min }\right)^2}\right)\right] = \mathcal{O}\bigl((\log k)^{-\eta}\bigr)\longrightarrow 0,
\end{align*}
as $k\rightarrow \infty$, for any $\eta>0$.
\end{proof}
Before we start to prove the exponential rate for \textsc{SPRB} under Assumption~\ref{assumption:jump point}, we state Cesàro summation Theorem next for the convenience of readers, which can be found in, for example, \citep{hardy_divergent_2024}. 
\begin{lemma}
\label{lemma:Cesaro}
It $\{b_t\}_{t=1}^\infty$ is a sequence of positive real numbers, such that $\sum_{t=1}^\infty b_t=\infty$, then for any sequence $\{a_t\}_{t=1}^\infty\subseteq \bbR$, one has the inequality:
\begin{align}
    \limsup\limits_{k\rightarrow\infty} \frac{\sum_{t=1}^k a_t}{\sum_{t=1}^k b_t}\leq \limsup\limits_{k\rightarrow \infty} \frac{a_k}{b_k}.
\end{align}
\end{lemma}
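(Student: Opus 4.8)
The plan is to prove this classical Stolz--Ces\`aro type inequality by a direct $\varepsilon$-argument. Write $L := \limsup_{k\to\infty} a_k/b_k$. If $L = +\infty$ the asserted inequality holds vacuously, so I would reduce immediately to the case $L < \infty$. Fix an arbitrary $\varepsilon > 0$; by the definition of the limit superior there exists an index $N$ such that $a_t/b_t < L + \varepsilon$ for every $t \geq N$. Since each $b_t > 0$ by hypothesis, this rearranges without reversing the inequality to the per-term bound $a_t < (L+\varepsilon)\,b_t$ for all $t \geq N$.

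Next I would split the partial sum and bound its tail using this per-term estimate. For $k > N$,
\begin{align*}
\sum_{t=1}^k a_t
= \sum_{t=1}^{N-1} a_t + \sum_{t=N}^k a_t
< \sum_{t=1}^{N-1} a_t + (L+\varepsilon)\sum_{t=N}^k b_t,
\end{align*}
and dividing by the strictly positive quantity $\sum_{t=1}^k b_t$ gives
\begin{align*}
\frac{\sum_{t=1}^k a_t}{\sum_{t=1}^k b_t}
< \frac{\sum_{t=1}^{N-1} a_t}{\sum_{t=1}^k b_t}
+ (L+\varepsilon)\,\frac{\sum_{t=N}^k b_t}{\sum_{t=1}^k b_t}.
\end{align*}

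Finally I would invoke the divergence hypothesis $\sum_{t=1}^\infty b_t = \infty$. Because the denominator $\sum_{t=1}^k b_t \to \infty$ while the numerator $\sum_{t=1}^{N-1} a_t$ is a fixed finite constant, the first term on the right tends to $0$; and since $\frac{\sum_{t=N}^k b_t}{\sum_{t=1}^k b_t} = 1 - \frac{\sum_{t=1}^{N-1} b_t}{\sum_{t=1}^k b_t} \to 1$, the second term converges to $L+\varepsilon$. Taking $\limsup_{k\to\infty}$ on both sides yields $\limsup_{k\to\infty} \frac{\sum_{t=1}^k a_t}{\sum_{t=1}^k b_t} \leq L + \varepsilon$, and letting $\varepsilon \downarrow 0$ gives the claim. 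There is no genuinely hard step here, as this is a standard result; the only point requiring slight care is that the coefficient $L+\varepsilon$ may be negative, but because $\frac{\sum_{t=N}^k b_t}{\sum_{t=1}^k b_t}$ converges to $1$ as an honest limit (rather than being merely bounded by $1$), the product converges to $L+\varepsilon$ regardless of sign, and no case distinction on the sign of $L$ is needed.
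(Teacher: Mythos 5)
The paper does not actually prove this lemma: it is stated as a known Ces\`aro-type result, with the proof deferred to the literature (the citation to Hardy's \emph{Divergent Series}), so there is no internal argument to compare against. Your $\varepsilon$-argument is the standard self-contained proof of this inequality, and it is correct: the per-term bound $a_t < (L+\varepsilon)b_t$ for $t \geq N$ is legitimate because $b_t > 0$, the split of the partial sum isolates a fixed finite head that is killed by $\sum_{t=1}^k b_t \to \infty$, and your observation that $\frac{\sum_{t=N}^k b_t}{\sum_{t=1}^k b_t} \to 1$ as an honest limit (so that multiplying by a possibly negative $L+\varepsilon$ causes no trouble) is exactly the right point of care --- a sloppier write-up that merely bounded this ratio by $1$ would indeed fail when $L+\varepsilon < 0$. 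The one loose end is the case $L = -\infty$, which your reduction to ``$L < \infty$'' does not cover, since $L+\varepsilon$ is then meaningless; but the fix is one line: for an arbitrary real $M$ there exists $N$ with $a_t/b_t < M$ for all $t \geq N$, the identical computation gives $\limsup_k \frac{\sum_{t=1}^k a_t}{\sum_{t=1}^k b_t} \leq M$, and letting $M \to -\infty$ yields the conclusion. With that remark added, your proof is complete and, unlike the paper, makes the lemma self-contained; note also that for the paper's application (where $\omega_t \vee (1-\omega_t) \in (0,1)$ and $b_t \equiv 1$) the finite-$L$ case you treated is the only one that is ever used.
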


\begin{proof}[Proof of Theorem~\ref{thm:jump point}]

We denote $\omega_k = \frac{\widehat{f}_{\ell k}}{\widehat{f}_{\ell k}-\widehat{f}_{rk}}$. An immediate result from Lemma~\ref{lemma:a.s._RL} and Lemma~\ref{lemma:almost surely} is that, by Cesàro summation Theorem~\ref{lemma:Cesaro}, with probability one,
\begin{align*}
   &\limsup\limits_{k\rightarrow\infty} \frac{1}{k-T}\sum\limits_{t=T+1}^k \bigl(\omega_{t}\vee (1-\omega_{t})\bigr)\\
  \leq  & 
   \limsup_{k\rightarrow\infty} \bigl(\omega_{k}\vee (1-\omega_{k})\bigr)\\
    = &  
   \limsup_{k\rightarrow\infty} \biggl( \frac{\widehat{f}_{\ell k}}{\widehat{f}_{\ell k}-\widehat{f}_{rk}}
   \vee
   \frac{\widehat{f}_{r k}}{\widehat{f}_{\ell k}-\widehat{f}_{rk}}
   \biggr) = \frac{\mu_-}{\mu_++\mu_-}\vee
   \frac{\mu_+}{\mu_++\mu_-}\\
   = & 1-\kappa\in (0,1).
\end{align*}
Recall that, by \textit{weight-section}, the $k$-th estimate reads
\begin{align*}
   X_{k+1} - \theta
   & = \frac{\widehat{f}_{\ell k}}{\widehat{f}_{\ell k}-\widehat{f}_{rk}}(X_{rk}-\theta) +
\frac{\widehat{f}_{rk}}{\widehat{f}_{\ell k}-\widehat{f}_{rk}}(X_{\ell k}-\theta)\\
& = \omega_{\ell k}(X_{rk}-\theta) + (1-\omega_k)(X_{\ell k}-\theta).
\end{align*} 
By substracting the last updated endpoint $X_k-\theta$, suppose $\ell[k]=k$, we obtain that
\begin{align*}
    X_{k+1}-X_k =\omega_{k}(X_{rk}-X_{\ell k}).
\end{align*}

 Therefore, we can proceed the iteration and obtain the following 
\begin{align*}
    & \left|X_{k+1}-X_{k}\right|
    \prod\limits_{t=T+1}^{k-1}|X_{\boldsymbol{\cdot},t+1}-X_{\boldsymbol{\cdot},t}|\\
    \leq  & |X_{T+1}-X_T|\prod\limits_{t=T+1}^{k-1} \bigl(\omega_{t+1}\vee (1-\omega_{t+1})\bigr)|X_{\boldsymbol{\cdot},t+1}-X_{\boldsymbol{\cdot},t}|,
\end{align*}
where we use the index $(\boldsymbol{\cdot},t)$, where $\boldsymbol{\cdot}\in \{\ell,r\}$, to denote the endpoint updated at stage $t$. 
Therefore, we obtain with high probability $1-o(1)$, 
\begin{align*}
    \limsup\limits_{k\rightarrow \infty} |X_{k+1}-X_{k}|
    & \leq  \limsup\limits_{k\rightarrow \infty} |X_{T+1}-X_T|\prod\limits_{t=T+1}^{k-1} \bigl(\omega_{t+1}\vee (1-\omega_{t+1})\bigr)\\
    & = \limsup\limits_{k\rightarrow \infty} |X_{T+1}-X_T|\exp\biggl(-\sum\limits_{t=T+1}^{k-1}\left[1-\bigl(\omega_{k_j}\vee (1-\omega_{k_j})\bigr)\right]\biggr)\\
    & \leq Ce^{-\kappa k},
\end{align*}
for some universal constant $C$,
where we recall that $\kappa := 1-\frac{
\mu_-}{\mu_++\mu_-}\vee
   \frac{\mu_+}{\mu_++\mu_-}$. This concluding the proof because with high probability $[X_{\ell k},X_{rk}]\owns \theta$ and by Lemma~\ref{lemma:EN jump}, $e^{-\kappa k}\leq \exp\biggl( -\kappa\frac{\sqrt{n}}{\sigma\left(\log n\right)^{1 + \delta}}\biggr)$.
\end{proof}

\section{Proof of Theorem \ref{thm:higher order}}
\label{section:additional proof of higher smoothness}
Recall the local behavior of the regression function $f(x)$:
\begin{align*}
    f(x) = \text{sign}(x-\theta)|x-\theta|(1 + o(1)),
\end{align*}
as $x\rightarrow 
\theta$.

\begin{lemma}
\label{lemma:cardinality}
Let a sequence of numbers defined by $t_{n+1}=t_n + \lceil\log(t_n +1)\rceil$ for $n\in \bbN^+$ and $t_1=1$. Let $\mathcal{T}=\bigl\{t_n,n\in \bbN^+\bigr\}$ and $\mathcal{T}(k) = \mathcal{T}\cap [k]$. Then we have $\lim\limits_{k\rightarrow \infty}\frac{|\mathcal{T}(k)|}{k/\log k}=1$.
\end{lemma}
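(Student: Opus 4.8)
The plan is to translate the counting statement into an inversion problem for the sequence $\{t_n\}$. Since every increment $\lceil \log(t_n+1)\rceil \ge 1$, the sequence is strictly increasing and diverges, so $|\mathcal{T}(k)|$ equals $N(k) := \max\{n : t_n \le k\}$, and this counting function is squeezed by the two-sided relation $t_{N(k)} \le k < t_{N(k)+1}$. Thus it suffices to pin down the precise growth of $t_n$ and then invert it. I claim the right asymptotic is $t_n \sim n\log n$, from which $N(k)\sim k/\log k$ will follow.

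To establish $t_n \sim n\log n$ I would proceed by a bootstrap. First I obtain crude two-sided bounds: the lower bound $t_n \ge n$ is immediate from $t_{n+1}-t_n \ge 1$, while for the upper bound I use $\lceil\log(t_n+1)\rceil \le \log t_n + 2$ (valid for $t_n\ge 1$) together with a routine induction to show $t_n \le 2n\log n$ for all large $n$; the inductive step works because $(n+1)\log(n+1)-n\log n \ge \log(n+1)$ dominates the incremental term $\log t_n + 2 \sim \log n$. These crude bounds give, after taking logarithms, the key fact $\log t_n \sim \log n$ (the $\log\log$ contributions being lower order). With this in hand I would apply the Stolz--Cesàro theorem to $a_n = t_n$ and $b_n = n\log n$ (strictly increasing and unbounded): the numerator $a_{n+1}-a_n = \lceil\log(t_n+1)\rceil \sim \log t_n \sim \log n$, while the denominator satisfies
\[
b_{n+1}-b_n = \log(n+1) + n\log\!\left(1+\tfrac1n\right) = \log(n+1) + 1 + o(1) \sim \log n,
\]
so the ratio tends to $1$ and hence $t_n/(n\log n)\to 1$.

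For the inversion, I would feed $t_n \sim n\log n$ into the squeeze $t_{N(k)} \le k < t_{N(k)+1}$. Writing $m = N(k)$, both endpoints are asymptotic to $m\log m$ (using $(m+1)\log(m+1)\sim m\log m$), so $k \sim m\log m$. Taking logarithms yields $\log k = \log m + \log\log m + o(1) \sim \log m$, and substituting back gives $m = k(1+o(1))/\log m = (k/\log k)(1+o(1))$, i.e. $|\mathcal{T}(k)| = N(k) \sim k/\log k$, which is exactly the assertion of Lemma~\ref{lemma:cardinality}. The main obstacle is the self-referential nature of the recursion, since the increment depends on the very quantity $t_n$ being estimated; the bootstrap resolves this by first extracting only $\log t_n \sim \log n$ from the loose bounds, which is all that the Stolz--Cesàro step actually needs, and throughout one must keep careful track that the ubiquitous $\log\log$ corrections remain negligible relative to the leading $\log$ terms.
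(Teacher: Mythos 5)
Your proof is correct and follows essentially the same route as the paper's: pin down the growth rate of $t_n$ and then invert the counting function through the squeeze $t_{N(k)} \le k < t_{N(k)+1}$. The paper's two-line proof simply asserts $t_n = n\log n + \mathcal{O}(n)$ without derivation, whereas your bootstrap bounds combined with Stolz--Ces\`aro actually establish the (weaker but sufficient) asymptotic $t_n \sim n\log n$, so your write-up is a more complete rendering of the same idea.
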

\begin{proof}
By definition, $|\mathcal{T}(k)|=\max_{n\in \bbN^+}\{t_n\leq k\}$. For any fixed $n\geq 3$, $t_n=n\log n + \mathcal{O}(n)\leq k\leq t_{n+1} = (n+1)\log(n+1)+\mathcal{O}(n)$. Therefore, $|\mathcal{T}(k)|=N(k)=\frac{k}{\log k}\bigl(1-\mathcal{O}(\frac{1}{\log k})\bigr)$.
\end{proof}
Similar to Lemma~\ref{lemma:number smooth}, we provide the following result about the (random) sample sizes.
\begin{lemma}
\label{lemma:number higher}
Assume Assumptions~\ref{assumption:regularity} and
\ref{assumption:higher smooth} hold.
Let $\{N_{tk}^\sharp\}_{1\le t\le k}$ be the total number of
observations that \textsc{SPRB} has collected at the design points
$\{X_t\}_{1\le t\le k}$ by stage~$k$.  Set $n \;=\; \sum_{t=1}^{k-1} N_{t}^\sharp + N_k$. Then, as $k\to\infty$,
\[
\frac{n}{N_k} \;\xrightarrow{\,P\,} 1 .
\]
\end{lemma}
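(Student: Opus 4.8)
The plan is to transplant the architecture of the proof of Lemma~\ref{lemma:number smooth} and to isolate the single place where the higher-order regime enters. Exactly as there, let $N_{tk}^\sharp$ denote the cumulative number of observations that \textsc{SPRB} has placed at the design point $X_t$ by the end of stage $k$, set $\rho_k := N_k^{-1}\sum_{t=1}^{k-1} N_{tk}^\sharp$, and observe that $n/N_k = 1+\rho_k$, so the assertion is equivalent to $\rho_k \xrightarrow{P} 0$. The crucial structural point is that the full \textsc{SPRB} of Algorithm~\ref{alg:sprb} still performs an opposite-endpoint refinement of size $(\log_2(t+1)-1)N_{i_t}$ at \emph{every} stage, whether that stage runs a bisection or a weight-section update; consequently the one-step bookkeeping is unchanged and yields $N_{k+1}\rho_{k+1} = N_k\rho_k + \log(k+1) N_{i_{k+1}}$, where $i_{k+1}$ indexes the endpoint refined at stage $k+1$. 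Hence, with probability $1-o(1)$, $\rho_{k+1} \le (N_k/N_{k+1})\rho_k + \log(k+1) N_{i_{k+1}}/N_{k+1}$.

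As in the smooth case, $\rho_k \xrightarrow{P} 0$ will follow from the deterministic contraction fact (if $a_{k+1}\le g_{k+1}^{-1}a_k + \log(k+1)\, g_{k+1}^{-1}$ and $g_k$ eventually dominates $(\log k)^{1+\eta}$, then $a_k\to 0$) together with the second-moment/Cauchy--Schwarz estimate used there, once I control the driving term. It therefore suffices to prove the key quantitative input
\[
k\,\log(k+1)\,N_{i_{k+1}} = o_P\!\left(N_{k+1}\right).
\]
Invoking Proposition~\ref{thm:EN} at both indices and using $f(x)\asymp \beta|x-\theta|^{\gamma}$ from Assumption~\ref{assumption:higher smooth}, this reduces to a \emph{bracket-separation} estimate: writing $X_{i_{k+1}}$ for the (far) refined endpoint, I must show
\[
\frac{|X_{i_{k+1}}-\theta|}{|X_{k+1}-\theta|} \gg \bigl(i_{k+1}\log k\bigr)^{1/(2\gamma)}
\]
with probability $1-o(1)$; that is, the current near point is closer to $\theta$ than the refined far endpoint by more than a polynomial-times-polylogarithmic factor.

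To produce this separation I would analyze the mixture dynamics block by block, where a block is the set of stages strictly between two consecutive elements of $\mathcal{T}$ (Lemma~\ref{lemma:cardinality} gives $|\mathcal{T}(k)|\asymp k/\log k$, so each block has length $\asymp \log k$). Two effects combine. First, the bisection stages at $t\in\mathcal{T}$ halve the bracket and keep it from becoming pathologically unbalanced, so the weight-section weights $\omega_t=\widehat f_{\ell t}/(\widehat f_{\ell t}-\widehat f_{rt})$ stay bounded away from $0$ and $1$. Second, on a weight-section stage the \emph{relative} estimation error obeys $\bar\varepsilon_{\cdot}/f(X_{\cdot}) = O_P\!\left((\log N_{\cdot})^{-1/2}\right)\to 0$, a direct consequence of the stopping boundary~\eqref{eq:boundary} and Theorem~\ref{thm:RSA}, so the noisy update tracks the noiseless weight-section, for which, with $a=\theta-X_{\ell t}$ and $b=X_{rt}-\theta$, one has $X_{t+1}-\theta = ab(a^{\gamma-1}-b^{\gamma-1})/(a^{\gamma}+b^{\gamma})$, a strict contraction of the near coordinate. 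Accumulating this contraction across a block and comparing with the far endpoint (which is retained, hence stale, for many stages while the near coordinate collapses) is what upgrades the per-step improvement to the required block-level separation.

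The main obstacle is precisely this block-level estimate. The naive per-stage version of the growth condition $N_{k+1}/N_k\gg\log k$ \emph{fails} at the bisection stages, where a midpoint update improves $|X-\theta|$ by only a bounded factor, and it can also fail transiently when the bracket is nearly symmetric ($a\approx b$), since the noiseless contraction $ab(a^{\gamma-1}-b^{\gamma-1})/(a^{\gamma}+b^{\gamma})$ degenerates as $a/b\to 1$. The delicate point is thus to show that bisection and weight-section cooperate — bisection preventing the extreme asymmetry that would stall sampling, while not forcing the exact symmetry that would stall contraction — so that the cumulative progress over each $\asymp\log k$-length block dominates the logarithmic refinement penalty. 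Once the displayed separation is in hand, the remaining steps (the deterministic contraction inequality and the Cauchy--Schwarz second-moment bound) are identical to those in the proof of Lemma~\ref{lemma:number smooth} and give $\rho_k\xrightarrow{P} 0$, i.e.\ $n/N_k\xrightarrow{P} 1$.
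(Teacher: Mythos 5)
Your bookkeeping is exactly the skeleton the paper intends: the paper's own ``proof'' of this lemma is a single sentence deferring to Lemma~\ref{lemma:number smooth}, and your reduction --- writing $n/N_k = 1+\rho_k$, deriving the recursion $N_{k+1}\rho_{k+1}=N_k\rho_k+\log(k+1)\,N_{i_{k+1}}$, and converting the required bound $k\log(k+1)\,N_{i_{k+1}}=o_P(N_{k+1})$ via Proposition~\ref{thm:EN} and $f(x)\asymp\beta|x-\theta|^{\gamma}$ into the separation estimate $|X_{i_{k+1}}-\theta|/|X_{k+1}-\theta|\gg\bigl(i_{k+1}\log k\bigr)^{1/(2\gamma)}$ --- is correct up to logarithmic factors. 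But the proposal stops at precisely the step that distinguishes the higher-order regime from the smooth one: the block-level separation estimate is announced as ``the main obstacle'' and is never proved. A plan that names its own missing lemma is a program, not a proof, so as submitted the argument has a genuine gap at its decisive point.

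The gap is substantive, not cosmetic. Your own noiseless formula $X_{t+1}-\theta=ab\,(a^{\gamma-1}-b^{\gamma-1})/(a^{\gamma}+b^{\gamma})$ shows that for $\gamma>1$ and an unbalanced bracket $b\ll a$, the update lands at distance $\approx b\bigl(1-(b/a)^{\gamma-1}\bigr)$, i.e.\ weight-section \emph{stalls} at the near-endpoint scale instead of contracting by a large factor; along such a run, consecutive values of $N_t$ given by Proposition~\ref{thm:EN} are of comparable order, which threatens to make $\sum_{t<k}N_t^{\sharp}$ commensurate with $\log k\cdot N_k$ rather than $o_P(N_k)$. Moreover, the situation at bisection stages is worse than the ``bounded improvement'' you concede: a midpoint update can move the iterate \emph{away} from $\theta$ (from the stalled near scale back to bracket scale), so $N_{k+1}$ can drop below $N_k$ and the factor $N_k/N_{k+1}$ in your recursion can exceed one --- the per-stage growth $N_{k+1}\gg k\log k\,N_k$ that powered the smooth-case argument fails in both directions. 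Your appeal to ``cooperation'' between bisection and weight-section is a plausible heuristic for why the accounting over each $\asymp\log k$-length block might still come out right, but nothing in the proposal quantifies it, and without that estimate the conclusion $\rho_k\xrightarrow{P}0$ is simply not derived. To your credit, this diagnosis exposes a real lacuna in the paper itself, whose one-line claim that ``the argument parallels that of Lemma~\ref{lemma:number smooth}'' conceals exactly the degeneracies you identified; but locating the needed estimate is where your attempt ends, not where it succeeds.
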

\begin{proof}
The argument parallels that of Lemma~\ref{lemma:number smooth} and is therefore omitted.
\end{proof}

\begin{proof}[Proof of Theorem~\ref{thm:higher order}]
By the construction of $X_{k+1}$, with high probability 1-$o(1)$, by Proposition~\ref{thm:EN}, we obtain that
\begin{align*}
    N_k(X_{k+1}-\theta)^{\frac{2\gamma}{1-2\delta\gamma}}
     & \leq 
     N_k(X_{k}-\theta)^{\frac{2\gamma}{1-2\delta\gamma}}\\
     & \leq k\beta^{\frac{2\gamma}{1-2\delta \gamma}-2\gamma}(X_{k}-\theta)^{\frac{2\gamma}{1-2\delta\gamma}-2\gamma}
    \log\Big( \frac{k}{(X_k-\theta)^{2\gamma}}\Big).
\end{align*}
We concludes the proof by Lemma~\ref{lemma:cardinality} that 
\begin{align*}
    |X_{k}-\theta|\leq 2^{-|\mathcal{T}(k)|}\leq 2^{-\frac{ck}{\log k}}.
\end{align*}
\end{proof}
\section{Additional simulation experiments for Section \ref{section:simulation}}
\label{section:appendix simulation}

In this section, we list the implementation details of Section~\ref{section:simulation}.

For all simulation summarized in Table~\ref{table:estimation_error}, the interval $I$ is set to be $[0,1]$ and the root $\theta = 0.3$. All of the algorithms start with $X_1 = 0.5$ while $[X_{\ell,1},X_{r,1}]$ is set to be $[0,1]$.

\begin{figure}[H]
    \centering
    \begin{minipage}{0.45\linewidth}
        \centering
        \includegraphics[width=\linewidth]{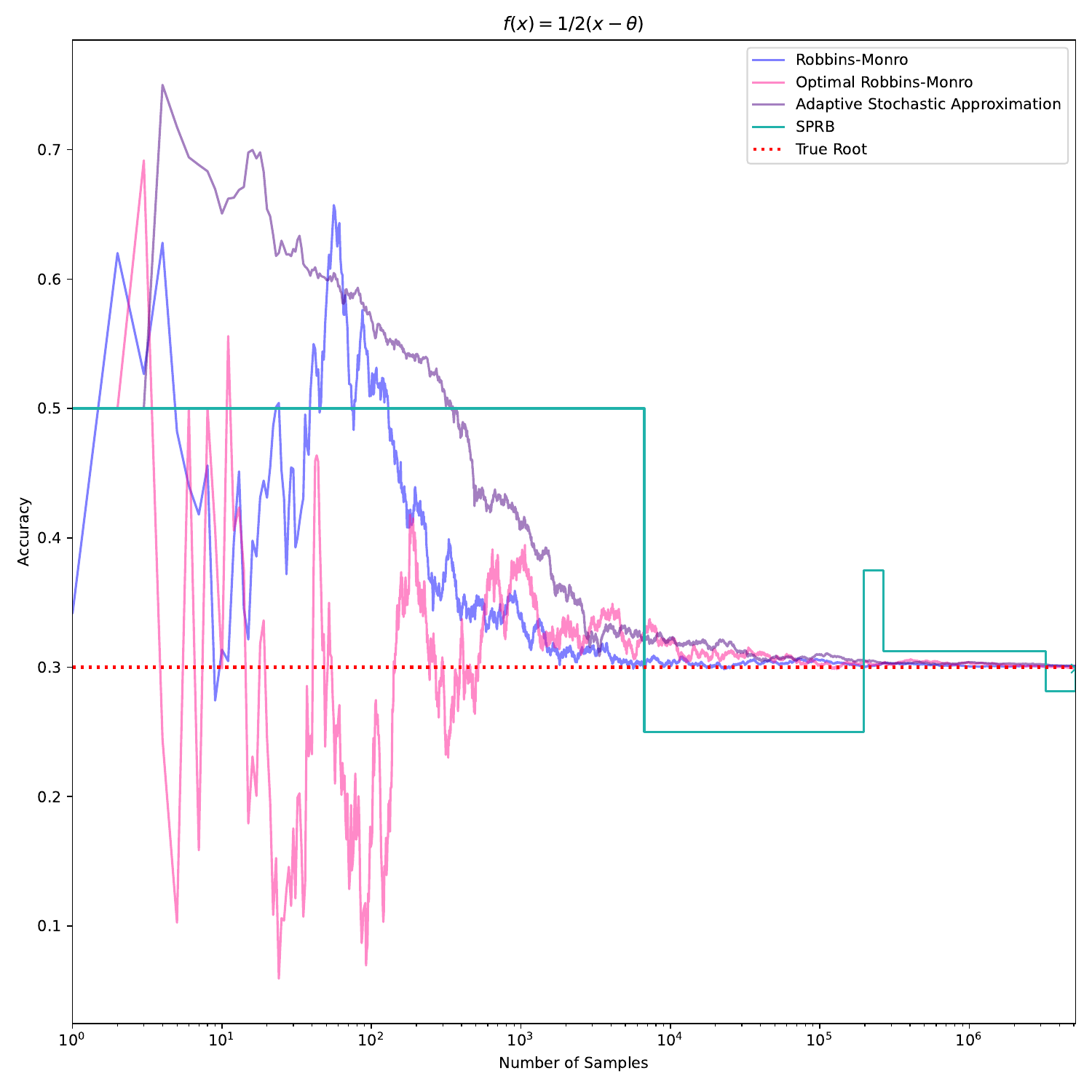}
      %  \caption{Caption for figure 1}
      %  \label{fig:zoom_out}
    \end{minipage}
    \hfill
    \begin{minipage}{0.45\linewidth}
        \centering
        \includegraphics[width=\linewidth]{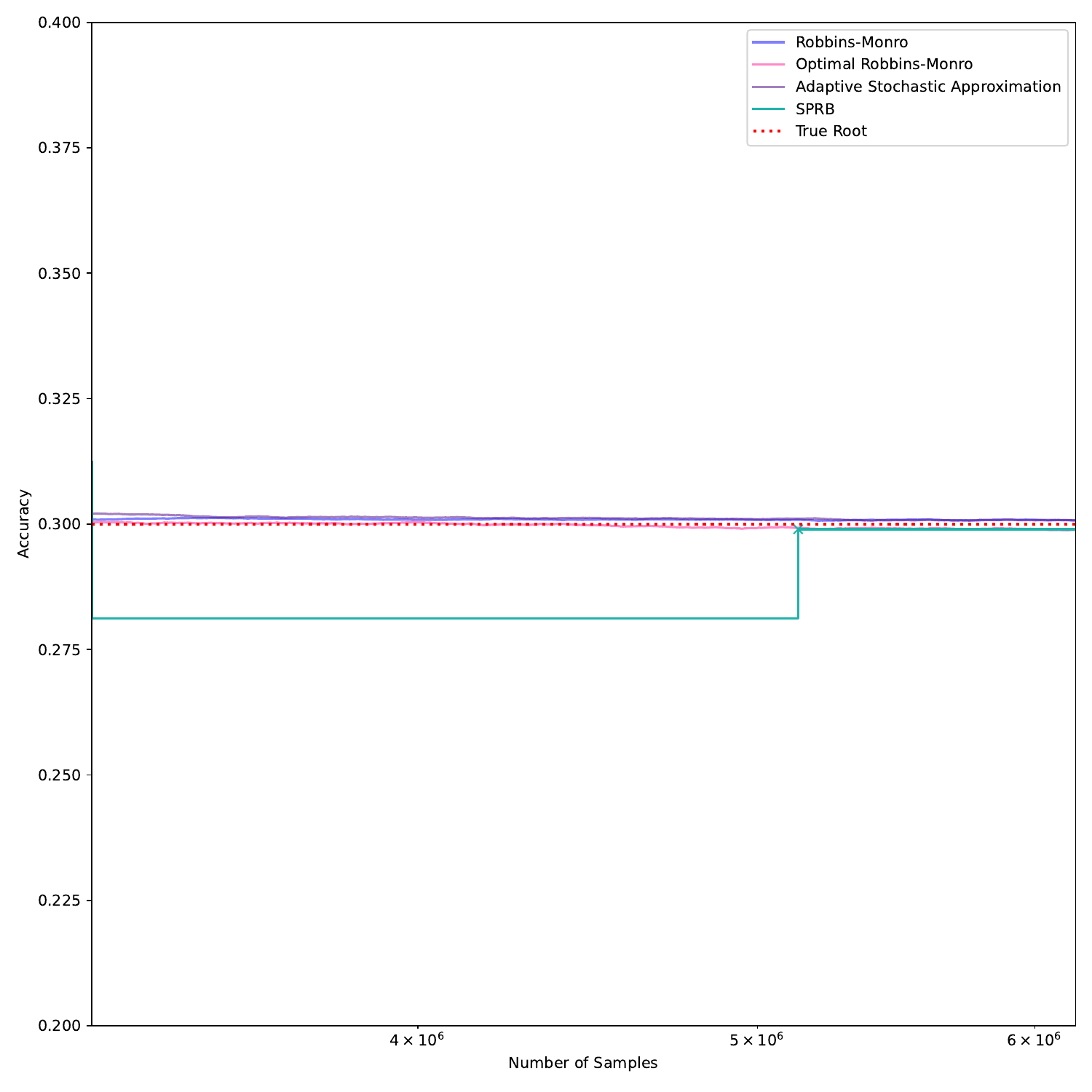}
      %  \caption{Caption for figure 2}
      %  \label{fig:zoom_in}
    \end{minipage}
    \caption{Trajectories of four root-finding procedures
           for estimating the root~$\theta$ of the linear regression function
           $f(x)=\frac{1}{2}(x-\theta)$.}
    \label{fig:overall_half}
\end{figure}

\begin{figure}[H]
    \centering
    \begin{minipage}{0.45\linewidth}
        \centering
        \includegraphics[width=\linewidth]{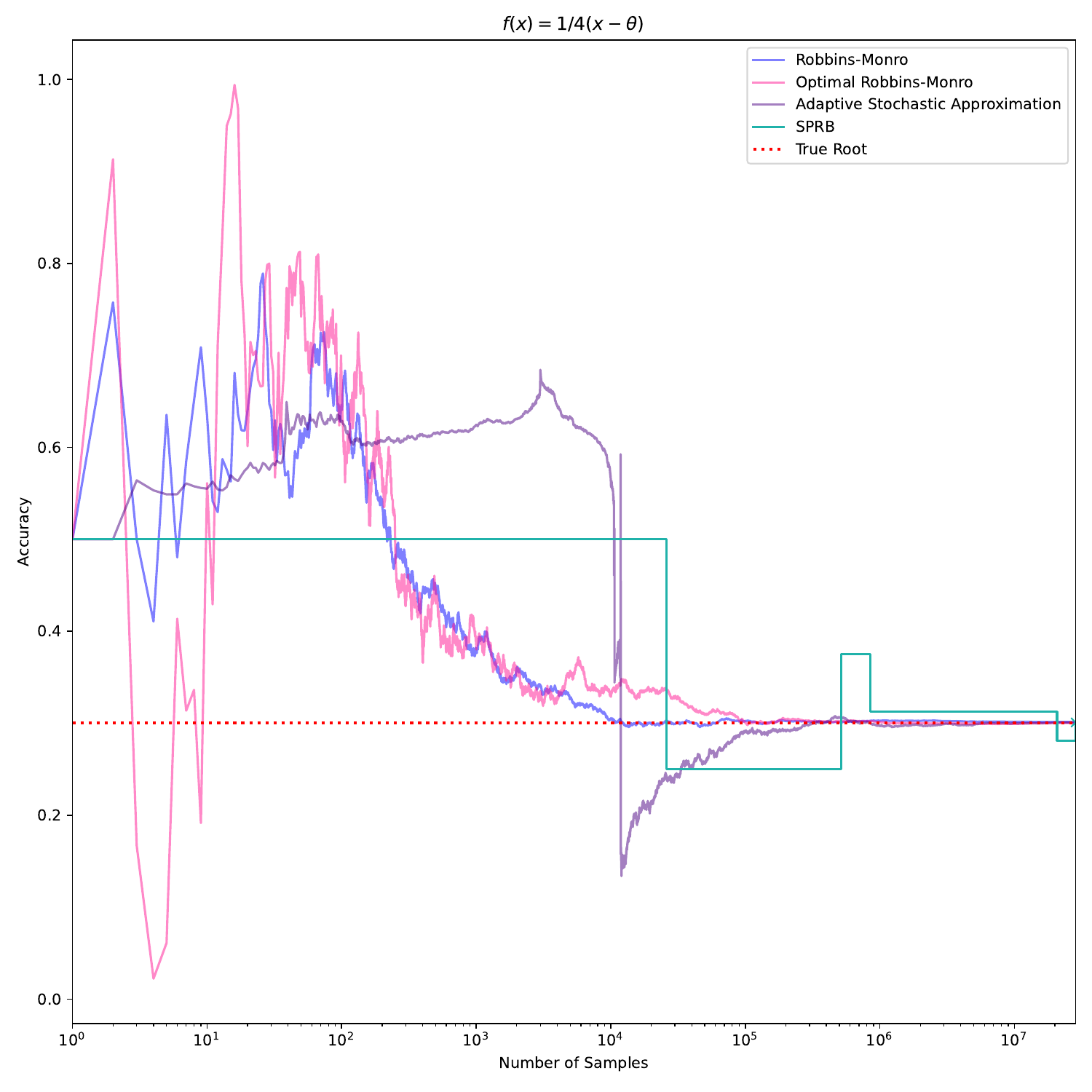}
       % \caption{Caption for figure 1}
       % \label{fig:zoom_out}
    \end{minipage}
    \hfill
    \begin{minipage}{0.45\linewidth}
        \centering
        \includegraphics[width=\linewidth]{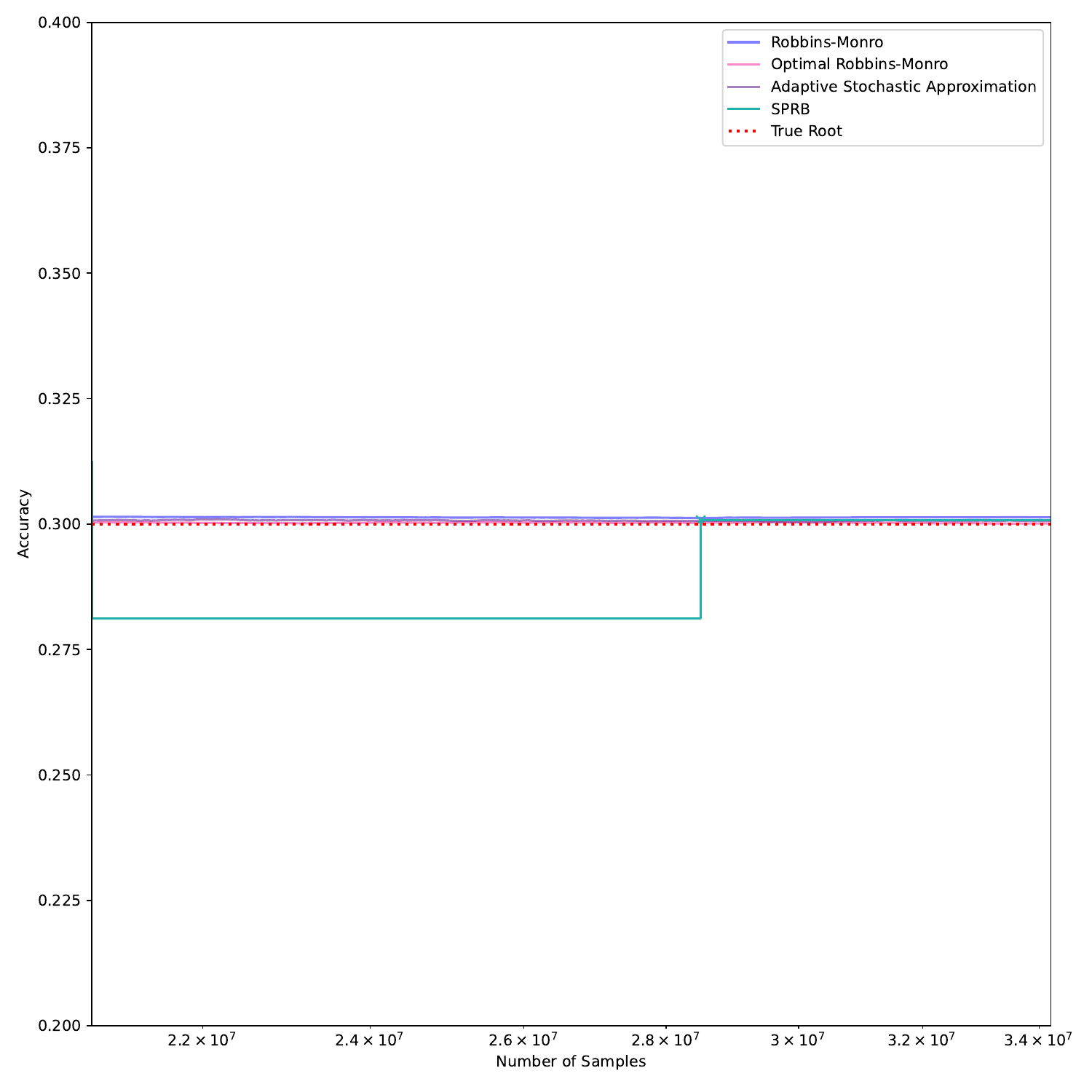}
     %   \caption{Caption for figure 2}
      %  \label{fig:zoom_in}
    \end{minipage}
    \caption{Trajectories of four root-finding procedures
           for estimating the root~$\theta$ of the linear regression function
           $f(x)=\frac{1}{4}(x-\theta)$.}
     \label{fig:overall_quarter}
\end{figure}

Figures~\ref{fig:overall_half} and \ref{fig:overall_quarter} compare four stochastic root–finding algorithms.  
The Robbins–Monro scheme (RM), its optimal–gain variant, and the adaptive stochastic approximation (ASA) display pronounced initial fluctuations and approach the root only gradually. Reducing the regression slope from \(f'(\theta)=\tfrac12\) to \(f'(\theta)=\tfrac14\) amplifies the variance of the three gradient–based procedures, whereas the behaviour of SPRB is essentially unchanged, indicating robustness to the unknown derivative.

%%%%%%%%%%%%%%%%%%%%%%%%%%%%%%%%%%%%%%%%%%%%%%%%%%%%%%%%%%%%%
%%                  The Bibliography                       %%
%%                                                         %%
%%  imsart-???.bst  will be used to                        %%
%%  create a .BBL file for submission.                     %%
%%                                                         %%
%%  Note that the displayed Bibliography will not          %%
%%  necessarily be rendered by Latex exactly as specified  %%
%%  in the online Instructions for Authors.                %%
%%                                                         %%
%%  MR numbers will be added by VTeX.                      %%
%%                                                         %%
%%  Use \cite{...} to cite references in text.             %%
%%                                                         %%
%%%%%%%%%%%%%%%%%%%%%%%%%%%%%%%%%%%%%%%%%%%%%%%%%%%%%%%%%%%%%

%% if your bibliography is in bibtex format, uncomment commands:
%% Style BST file (imsart-number.bst or imsart-nameyear.bst)
%\bibliography{bibliography}       % Bibliography file (usually '*.bib')

%% or include bibliography directly:
% \begin{thebibliography}{}
% \bibitem{b1}
% \end{thebibliography}

\bibliographystyle{plainnat}
\bibliography{YY,extra}
\end{document}